\theoremstyle{plain}
\newtheorem{lemma}{Lemma}[section]
\newtheorem{theorem}[lemma]{Theorem}
\newtheorem{corollary}[lemma]{Corollary}
\newtheorem{proposition}[lemma]{Proposition}
\newtheorem{definition}[lemma]{Definition}
\theoremstyle{remark}
\newtheorem{remark}{Remark}
\newcommand*{\rom}[1]{\expandafter\@slowromancap\romannumeral #1@}
\def\ee{\varepsilon}
\def\pa{\partial}
\def\ee{\end{equation}}
\def\be{\begin{equation}}
\begin{document}

\vskip 0.125in

\title[Finite-time Blowup and stability of blowup]
{Stable singularity formation for the Inviscid Primitive Equations}

\date{\today}
%\thanks{\textit{ }}
\author[C. Collot]{Charles Collot*}
\address[C. Collot]
{ \thanks{*Corresponding author.}
Laboratoire Analyse, G\'eom\'etrie, et mod\'elisation,
	CNRS and CY Cergy Paris Universit\'e
	 2 rue Adolphe Chauvin \\
	95300 Cergy, France.}
 \email{ccollot@cyu.fr}

\author[S. Ibrahim]{Slim Ibrahim}
\address[S. Ibrahim]
{Department of Mathematics and Statistics  \\
University of Victoria  \\
3800 Finnerty Road, Victoria  \\
 B.C., Canada V8P 5C2.} 
 \email{ibrahims@uvic.ca}

\author[Q. Lin]{Quyuan Lin}
\address[Q. Lin]
{	Department of Mathematics \\
     University of California  \\
	Santa Barbara, CA 93106, USA.} \email{quyuan\_lin@ucsb.edu}

\begin{abstract}
The primitive equations (PEs) model large scale dynamics of the oceans and the atmosphere. While it is by now well-known that the three-dimensional viscous PEs is globally well-posed in Sobolev spaces, and that there are solutions to the inviscid PEs (also called the hydrostatic Euler equations) that develop singularities in finite time, the qualitative description of the blowup still remains undiscovered. In this paper, we provide a full description of two blowup mechanisms, for a reduced PDE that is satisfied by a class of particular solutions to the PEs. In the first one a shock forms, and pressure effects are subleading, but in a critical way: they localize the singularity closer and closer to the boundary near the blow-up time (with a logarithmic in time law). This first mechanism involves a smooth blow-up profile and is stable among smooth enough solutions. In the second one the pressure effects are fully negligible; this dynamics involves a two-parameters family of non-smooth profiles, and is stable only by smoother perturbations. 

\end{abstract}

\maketitle

MSC Subject Classifications: 35B44, 35Q86, 86A10.\\

Keywords: inviscid primitive equations; hydrostatic Euler equations; blow-up; stability

\section{introduction}
We consider the $3D$ inviscid primitive equations (PEs) 
\begin{subequations}\label{PE-system-intro}
\begin{align}
    &u_t + u u_X + vu_Y + w u_Z  +
	p_X - \Omega v= 0, \label{EQ1-1}
	\\
	&v_t + u v_X + vv_Y + w v_Z  + p_Y + \Omega u= 0 , \label{EQ1-2} 
	\\
	&p_Z + T =0 ,   \label{EQ1-3}  
	\\
	&\theta_t + u \theta_X + v \theta_Y + w \theta_Z  = 0, \label{EQ1-4} 
	\\
	&u_X + v_Y+ w_Z =0,   \label{EQ1-5}
\end{align}
\end{subequations}
set in the domain 
\begin{equation*}
    \mathcal D = \mathcal M \times [0,1] = \{(X,Y,Z): (X,Y)\in \mathcal M, 0\leq Z\leq 1\},
\end{equation*}
where $\mathcal M \subseteq \mathbb R^2$ is a smooth bounded domain with real analytic boundary. System \eqref{PE-system-intro} is supplemented with the initial value $(u_0, v_0, \theta_0)$, and satisfies the relevant geophysical boundary conditions (cf. \cite{KTVZ11}):
\begin{equation}\label{PE-system-intro-BC}
\begin{split}
     &w(t,X,Y,Z) = 0 \text{ on } Z=0 \text{ and } Z=1;
     \\
     &\int_0^1 (u,v)(t,X,Y,Z) dZ \cdot \vec{n} = 0 \text{ on } (X,Y)\in \partial \mathcal M,
\end{split}
\end{equation}
where $\vec{n}$ is the outward unit normal to $\partial\mathcal M$.
System \eqref{PE-system-intro} is derived as a formal asymptotic limit of the small aspect ratio (the ratio of the depth or the height to the horizontal length scale) from the Boussinesq system (see \cite{AG01,LT18}). With full viscosity, the global existence of strong solutions for the $3D$ PEs was firstly established in \cite{CT07}, and later in \cite{HK16,K06,KZ07,KZ072}. The above results were extended to the cases with only horizontal viscosity, see  \cite{CLT16,CLT17,CLT17b}. With only vertical viscosity, the ill-posedness in Sobolev spaces is shown in \cite{RE09}. This ill-posedness can be overcome by considering additional linear (Rayleigh-like friction) damping \cite{CLT19}, or Gevrey regularity and some convex conditions on the initial data \cite{GMV20}.

In the absence of viscosity, and for adiabatic systems (i.e. constant temperature that can be set by convention to be zero), %without coupling with the temperature, 
system \eqref{PE-system-intro} is also called the hydrostatic Euler equations. Such system has a loss of horizontal derivative, making local well-posedness in Sobolev a hard problem for general initial data. Indeed, 
the linear and nonlinear ill-posedness in any Sobolev space have been established in \cite{RE09} and in \cite{HN16}, respectively. 
On the other hand, by assuming either real analyticity
or some special structures (local Rayleigh condition) on the initial data, one is able to establish the local well-posedness, see \cite{BR99,BR03,GILT20,GR99,KMVW14,KTVZ11,MW12}. Moreover, it was proven that smooth solutions to the inviscid PEs, in the absence of rotation, can develop singularities in finite time. (cf. \cite{CINT15,W12}). Recently, it is shown in \cite{ILT20} that the results about ill-posedness and finite-time blowup can be extended to the case with rotation.

The different proofs of the singularity formation rely on the proof of the finite time blowup for the corresponding $2D$ model for which the initial data can be lifted to the full three-dimensional equation, where the well-posedness is applicable. To get to the $2D$ system, the ideas in \cite{CINT15, ILT20} is to observe that if $\Omega =0$ and
%More precisely, system (\ref{EQ1-1})--(\ref{EQ1-5}) can be simplified either by working in a symmetry class of solutions (see \cite{Cao,IbrahimNakaTiti}, or by choosing special initial data that ``freezes" the tangential velocity at some point $\hat x$ (see \cite{Kwong}. If 
 initially $\theta_0 = v_0= 0$, then any smooth enough solution $(u,v,w,\theta)$ to system \eqref{PE-system-intro}, with initial data $(u_0, v_0, \theta_0)$ and boundary condition \eqref{PE-system-intro-BC}, must satisfy $\theta(t,X,Y,Z) \equiv 0$ and $v(t,X,Y,Z) \equiv 0$. Moreover, if initially $u_0$ is independent of the $Y$ variable, then any smooth enough solution remains independent of the $Y$ variable. Therefore, under the assumption that we have a smooth solution and for the following initial data
\begin{equation} \label{2d-condition}
    u_0(X,Y,Z)=u_0(X,Z), \;\; v_0(X,Y,Z)=0, \;\;  \theta_0(X,Y,Z) =0, 
\end{equation}
we obtain the $2D$ inviscid PEs system (also known as the hydrostatic Euler equations):
\begin{subequations}\label{PE-system-intro-2d}
\begin{align}
    &u_t + u\, u_X + w u_Z +p_X  = 0 , \label{EQ2-1}
    \\
    &p_Z =0 ,   \label{EQ2-3}  
    \\
    &u_X + w_Z =0.   \label{EQ2-4}
\end{align}
\end{subequations}
Since system \eqref{PE-system-intro-2d} is independent of $Y$ variable, the horizontal domain $\mathcal M$ needs to be $Y$ independent and translation invariant in $Y$. Hence without loss of generality we may consider system \eqref{PE-system-intro-2d} set on 
%$\mathcal M = [-L,L]\times \mathbb R$. 
%Therefore, the domain for system \eqref{PE-system-intro-2d} is
\begin{equation*}
    \mathcal D = \big\{(X,Z):  -L\leq X \leq L, 0\leq Z\leq 1\big\},
\end{equation*} 
and the boundary conditions \eqref{PE-system-intro-BC} becomes
\begin{equation}\label{PE-system-intro-2d-BC}
    \begin{split}
        &w(t,X,0) = w(t,X,1) = 0, \\
        &\int_0^1 u(t,-L,Z) dZ = \int_0^1 u(t,L,Z) dZ = 0.
    \end{split}
\end{equation}
From (\ref{EQ2-4}) and (\ref{PE-system-intro-2d-BC}), we know that
\begin{equation} \label{ux-barotropic}
   w(t,X,Z) = -\int_0^Z u_X(t,X,\tilde Z) d\tilde Z, \quad \int_0^1 u_X(t,X,Z) dZ = 0, \text{ and thus, } \int_0^1 u(t,X,Z) dZ = 0.
\end{equation}
One is able to further simplify system \eqref{PE-system-intro-2d}. Differentiating (\ref{EQ2-1}) with respect to $X$, one obtains
\begin{eqnarray}\label{EQx-1} 
u_{Xt} + u\, u_{XX} + u_X^2 + w_X u_Z + wu_{XZ} +p_{XX} = 0 ,  
\end{eqnarray}
Thanks to (\ref{ux-barotropic}), integrating (\ref{EQx-1}) with respect to $Z$ over the interval $[0,1]$, an integration by parts together with (\ref{EQ2-3}),  (\ref{EQ2-4}) and (\ref{PE-system-intro-2d-BC}) enables us to solve for the pressure:
\begin{eqnarray}
&&\hskip-.8in  p_{XX} = -\int_0^1  (u^2)_{XX}   dZ.\label{EQpxx}
\end{eqnarray}
We consider from now on solutions $u$ that are odd in $X$, i.e. $u(X)=-u(-X)$, and introduce the trace of their horizontal derivative on the central line:
\be \label{def:a}
a(t,Z)=-\pa_X u(t,0,Z)
\ee
Differentiating \eqref{EQx-1} with respect to $X$, then injecting \eqref{ux-barotropic} and \eqref{EQpxx}, and taking $X=0$, one obtains the following closed evolution equation for $a$:
\begin{subequations}\label{1D-model-intro}
\begin{align}
    &a_t -a^2 + \Big(\int_0^Z a(t,\widetilde{Z})d\widetilde{Z}\Big) a_Z +2\int_0^1 a^2 dZ =0, \label{1D-model-intro1}
    \\
    &\int_0^1 a(t,Z)\;dZ=0. \label{1D-model-intro2}
\end{align}
\end{subequations}
Note that for solutions $u$ having the form
\begin{equation}\label{special-form}
    u(t,X,Z) = -Xa(t,Z), \text{ and thus, } u_X(t,X,Z) =  -a(t,Z),
\end{equation}
system \eqref{PE-system-intro-2d} and the boundary condition \eqref{PE-system-intro-2d-BC} for $u$ are equivalent to system \eqref{1D-model-intro} for $a$. We emphasize here that the term $2\int_0^1 a^2 dZ$ comes from the pressure term.

\begin{remark}\label{remark:rotation}
 In the presence of rotation i.e. $\Omega\geq 0$, choosing $-\pa_X v_0(0,Z) = \Omega$, it has been shown in \cite{ILT20} that $a$ defined in \eqref{def:a} still satisfies system \eqref{1D-model-intro}.
 %($\Omega\neq 0$) of system \eqref{PE-system-intro-2d} is considered in \cite{ILT20}. Indeed, it has been shown that by considering $-\pa_X v_0(0,Z) = \Omega$, the function $a$ defined in \eqref{def:a} still satisfies system \eqref{1D-model-intro}.
\end{remark}

In \cite{CINT15}, a family $(\psi_m)_{m>0}$ of initial data has been constructed for which the corresponding solution $a$ to \eqref{1D-model-intro} blows up at time $t=1$ with $a(t,Z)=(1-t)^{-1}\psi_m(Z)$. Lifting this result to a blowup for the original 2D system \eqref{PE-system-intro-2d} is however non-trivial given the lack of well-posedness result in the class of regularity of the profiles $\psi_m$. In addition perturbation of these solutions seems challenging given their rigidity. On the other hand, Wong \cite{W12} has constructed explicit initial data to \eqref{PE-system-intro-2d} that are analytic for which the corresponding solution will exhibit a singularity in finite time making  $\|u(t,\cdot)\|_{L^\infty}+\|p_X(t,\cdot)\|_{L^\infty}+\|u_{X}(t,\cdot)\|_{L^\infty}$ infinite at the blowup time. The purpose of this paper is to provide precise qualitative properties on the singularity formation.

%Note that for solutions $u$ having the form
%\begin{equation}
    %u(t,X,Z) = -Xa(t,Z), \text{ and thus, } u_X(t,X,Z) = u_X(t,Z) = -a(t,Z),
%\end{equation}
%system \eqref{PE-system-intro-2d} and the boundary condition \eqref{PE-system-intro-2d-BC} for $u$ are equivalent to equation \eqref{1D-model-intro} for $a$.

Our main result is the following showing the existence and stability of a blowup solution for \eqref{1D-model-intro} with a smooth profile.

\begin{theorem}[Smooth blowup]\label{theorem:critical}
Consider the profile $\phi(z)=\phi_0(z)=e^{-z}$. Then there exist  $\lambda^{*}_0>0$ and $\delta>0$ such that for all $0<\lambda_0\leq \lambda_0^*$ a constant $\kappa>0$ exists such that, if initially
\be \label{smooth:id:initial}
a_0(Z)=\frac{1}{\lambda_0} \phi\left(\frac{Z}{\nu_0}\right)+\tilde a_0(Z) , \ \ 0\leq Z \leq 1,
\ee
with
\be \label{smooth:id:initialcond}
\frac{2}{3\log (\lambda_0^{-1})}\leq \nu_0\leq\frac{3}{2\log (\lambda_0^{-1})},\quad
\mbox{and}\quad\left\|\tilde a_{0}\right\|_{C^2 ([0,1])} \leq \kappa,
\ee
then there exists $T>0$ and $C>0$ such that the solution $a$ to \eqref{1D-model-intro} with initial data $a(t=0)=a_0$ blows up at time $T>0$ according to
\be \label{smooth:1}
a(t,Z)=\frac{1}{(T-t)} \phi\left(\frac{Z}{\nu(t)}\right)+\tilde a(t,Z) \qquad \mbox{with} \quad \nu(t)=\frac{1}{|\log(T-t)|},
\ee
where for all $t\in [0,T)$:
\be \label{smooth:2}
\|\tilde a(t,\cdot)\|_{L^{\infty}([0,1])}\leq C (T-t)^{-1} |\log (T-t)|^{-\delta}
\ee

\end{theorem}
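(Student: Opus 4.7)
The plan is a modulated-profile analysis in self-similar variables, in the spirit of the standard framework for type~I blowup of transport equations. I would introduce an amplitude scale $\lambda(t)$, a width $\nu(t)$, and a rescaled time $\tau$ defined by $d\tau/dt = 1/\lambda$. Setting $z = Z/\nu$ and $\Phi(\tau,z) = \lambda(t)\,a(t, \nu z)$, the reduced equation \eqref{1D-model-intro1} transforms into
\begin{equation*}
\Phi_\tau - \frac{\lambda_\tau}{\lambda}\Phi - \frac{\nu_\tau}{\nu}\,z\,\Phi_z + \Phi - \Phi^2 + \Big(\int_0^z \Phi(z')\,dz'\Big)\Phi_z + 2\nu \int_0^{1/\nu}\Phi^2\,dz = 0.
\end{equation*}
A direct computation verifies that $\phi(z) = e^{-z}$ is a stationary solution of the pressure-free, non-modulated limiting problem, since $\phi - \phi^2 + (1-e^{-z})\phi_z = e^{-z} - e^{-2z} - e^{-z}(1-e^{-z}) = 0$. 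The mean-zero constraint \eqref{1D-model-intro2} is enforced by absorbing a small constant correction into $\tilde a$. Writing $\Phi = \phi + \varepsilon$ then produces a source driven by the modulation terms and by the pressure feedback $2\nu \int_0^{1/\nu}\phi^2\,dz = \nu(1 - e^{-2/\nu}) = \nu + O(e^{-2/\nu})$.

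Next I would derive the modulation law. The linearization around $\phi$ is
\begin{equation*}
L\varepsilon = (1 - 2e^{-z})\varepsilon + (1 - e^{-z})\varepsilon_z - e^{-z}\int_0^z \varepsilon(z')\,dz',
\end{equation*}
and a short computation shows $L(z\phi_z) = 0$, reflecting dilation invariance of the limit equation, while $L\phi = -\phi$. Imposing two orthogonality conditions on $\varepsilon$ against the adjoint elements associated to these modes uniquely determines an ODE for $(\lambda,\nu)$. Projecting the pressure source $\nu + O(e^{-2/\nu})$ onto the dilation direction gives, at leading order,
\begin{equation*}
\frac{\lambda_\tau}{\lambda} = O(\nu^{1+\delta'}), \qquad \frac{\nu_\tau}{\nu} = -c\,\nu + O(\nu^{1+\delta'}),
\end{equation*}
for an explicit constant $c>0$ and some $\delta'>0$. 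Integrating yields $\nu(\tau) \sim 1/(c\tau)$ and $\lambda(\tau) \to \lambda_\infty > 0$; defining $T$ as the first vanishing time of $\lambda$ and translating back via $\tau = -\log(T-t)$ gives precisely $\lambda(t) \sim T-t$ and $\nu(t) \sim 1/|\log(T-t)|$. The parameter window specified in \eqref{smooth:id:initialcond} is the attracting basin of this planar ODE.

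Finally I would close a bootstrap on the remainder. In rescaled variables the perturbation satisfies $\varepsilon_\tau + L\varepsilon = \mathcal N(\varepsilon) + \mathcal S(\lambda,\nu)$ with $\mathcal N$ quadratic and $\mathcal S$ rendered orthogonal to the neutral modes by the modulation. The key dynamical inequality is an energy estimate in a suitable weighted $C^2$ norm, relying on: (i) the transport character of $L$, whose characteristic field $1-e^{-z}$ is outgoing on $(0,1/\nu]$; (ii) coercivity of the zero-order coefficient $1 - 2e^{-z}$ on the outer region $z \geq \log 2$, where the profile is already spent; (iii) smallness of the non-local feedback $4\nu \int_0^{1/\nu}\phi\,\varepsilon\,dz$ as $\nu \to 0$; (iv) elimination of the neutral directions by the orthogonality conditions. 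Choosing $\kappa$ and $\lambda_0^*$ small enough then propagates a decay $\|\varepsilon(\tau,\cdot)\|_{C^2} \lesssim e^{-\delta\tau}$, which in physical variables is exactly \eqref{smooth:2}. The main obstacle I anticipate is the inner region $z \lesssim 1$, where both $1 - e^{-z}$ and $1 - 2e^{-z}$ degenerate and neither transport nor zero-order coercivity alone suffices: control there must rely on the modulation-imposed orthogonality and on the specific algebraic structure of $\phi$, and the matching between inner and outer regions is what ultimately pins down the admissible exponent $\delta$ in \eqref{smooth:2}.
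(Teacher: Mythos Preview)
Your overall strategy---self-similar variables, modulation, linearization around $\phi=e^{-z}$, bootstrap---matches the paper's. But two substantive errors would cause the argument to fail as written.

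\textbf{The modulation law for $\lambda$.} Your transformed equation contains an extraneous $+\Phi$: rescaling $a=\lambda^{-1}\Phi$ with $d\tau/dt=1/\lambda$ gives
\[
\Phi_\tau-\frac{\lambda_\tau}{\lambda}\Phi-\frac{\nu_\tau}{\nu}z\Phi_z-\Phi^2+\Big(\int_0^z\Phi\Big)\Phi_z+2\nu\int_0^{1/\nu}\Phi^2\,dz=0,
\]
with no zero-order $+\Phi$. The profile identity $\phi-\phi^2+(\pa_z^{-1}\phi)\phi_z=0$ that you correctly verify emerges only after substituting $\lambda_\tau/\lambda=-1$ to leading order. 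The paper fixes $(\lambda,\nu)$ by the pointwise conditions $\varepsilon(s,0)=\varepsilon_z(s,0)=0$ (not integral orthogonality against adjoint modes), which forces
\[
\frac{\lambda_\tau}{\lambda}+1 \ = \ -\frac{\nu_\tau}{\nu} \ = \ 2\nu\int_0^{1/\nu}(\phi+\varepsilon)^2\,dz \ = \ \nu+O(\nu\|\varepsilon\|_{L^\infty}),
\]
so $\lambda\sim \tau e^{-\tau}$ and $\nu\sim 1/\tau$. Your assertion that $\lambda_\tau/\lambda=O(\nu^{1+\delta'})$ and $\lambda\to\lambda_\infty>0$ is inconsistent with your own next sentence about ``the first vanishing time of~$\lambda$''; in fact $\lambda$ decays exponentially in $\tau$.

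\textbf{The decay of the remainder is algebraic, not exponential.} This is the decisive gap. Even after modulation removes the components of the source along $\phi$ and $z\phi_z$, the residual forcing in the $\varepsilon$-equation is $2\nu\bigl(\int(\phi+\varepsilon)^2\bigr)\bigl(-1+(z+1)\phi\bigr)$, of size $\nu\sim 1/\tau$. A damped equation $\varepsilon_\tau+L\varepsilon=O(1/\tau)$ yields at best $\varepsilon=O(1/\tau)$, never $O(e^{-\delta\tau})$. The paper's bootstrap establishes $\mathcal E_1^2+\mathcal E_2^2\lesssim \tau^{-4/3}$ (any exponent strictly below $2$ works), via a weighted $L^2$ energy on $\varepsilon_z$ with weight $w(z)=z^{-2}$ on an interior zone $[0,z^*]$ combined with a maximum-principle comparison on the exterior $[z^*,1/\nu]$. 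This algebraic rate is precisely what produces the logarithmic factor $|\log(T-t)|^{-\delta}$ in \eqref{smooth:2} rather than an extra power of $(T-t)$. The inner-region difficulty you anticipate is real but secondary; the genuine obstruction you missed is that the pressure is subleading only \emph{algebraically}, which is exactly what distinguishes the smooth $\beta=0$ regime from the non-smooth $\beta>0$ regime of Theorem~\ref{nonsmooth}, where $\nu\sim e^{-\beta\tau}$ and exponential decay of $\varepsilon$ does hold.
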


\begin{remark} 
\hfill \break
\begin{itemize}
    \item Note that the general solution $u$ (other than \eqref{special-form}) to \eqref{PE-system-intro} might not exist up to time $T$. If it does, then the divergence $\| (\pa_X u)|_{X=0}(t)\|_{L^\infty([0,1])}\rightarrow \infty$ as $t\uparrow T$ implied by \eqref{def:a}, \eqref{smooth:1} and \eqref{smooth:2} signals the formation of a shock for $u$ along the horizontal $X$-direction at the point $(X,Z)=(0,0)$.
    
   \item The pressure term is of lower order compared with the other terms as $t\uparrow T$, but not negligible. Indeed, the modulation equations for the scaling parameters $(\lambda,\nu)$ are:
   \be \label{eq:modulationintro}
   \left\{\begin{array}{l l} \lambda_t=-1+C_0 \nu +h.o.t., \\  \nu_t=-C_0 \frac{\nu^2}{\lambda} +h.o.t., \end{array} \right., \qquad C_0= 2\int_0^\infty \phi_0^2(z)dz=1.
   \ee
   Here $h.o.t.$ mean higher order terms. The pressure is thus responsible for the behaviour \eqref{smooth:1} for $\nu$ corresponding to a self-similarity of the second kind.
   
\item Equation \eqref{1D-model-intro1} shares similarities with the viscous Prandtl equation on the axis \cite{EE}:
\begin{align}\label{1DPrandtl}
\left\{\begin{array}{ll}
\xi_t-\xi_{yy}-\xi^2+\left(\int_0^y \xi \right)\xi_Y=0, \quad Y>0\\
\xi(t,0)=0, \ \ \xi(0,Y)=\xi_0(Y).
\end{array}
 \right.
\end{align}
In \cite{CGIM18}, a blow-up dynamics is found, for which the viscosity is negligible, where the singularity forms on a large spatial scale $y\approx \nu(t)=(T-t)^{-1/2}$, see also \cite{CGM,CGM2,KVW}. Note that \eqref{1D-model-intro}  without pressure is \eqref{1DPrandtl} without viscosity. Interestingly, the blow-up dynamics of Theorem \ref{theorem:critical} and that of \cite{CGIM18} are genuinely different, due to the absence of a Dirichlet boundary condition for \eqref{1D-model-intro} allowing for a blow-up at the boundary, and to the absence of pressure and confinement $z\in [0,1]$ in \eqref{1DPrandtl} allowing for the transverse spatial scale $\nu$ to grow to infinity so to maintain the divergence free condition.
\item Another instance of a blow-up dynamics with a logarithmic correction for the scale due to subleading but non-negligible effects happens for the semilinear heat equation \cite{MZ}.

\end{itemize}
\end{remark}

The existence and uniqueness of analytic solutions of system \eqref{PE-system-intro} in the domain $\mathcal D$ with boundary condition \eqref{PE-system-intro-BC} is established in \cite{KTVZ11}. By virtue of this, Theorem \ref{theorem:critical}, and Remark \ref{remark:rotation}, we have the following corollary regarding the existence of an explicit singular solution of system \eqref{PE-system-intro} satisfying the boundary condition \eqref{PE-system-intro-BC}.
\begin{corollary}
 Consider the profile $\phi(z)=e^{-z},$ and suppose that the initial condition $(u_0,v_0,\theta_0)$ of system \eqref{PE-system-intro} with boundary condition \eqref{PE-system-intro-BC} satisfies
\begin{equation*}
\begin{split}
    &u_0(X,Y,Z) =  -Xa_0(Z) = -X\left( \frac{1}{\lambda_0} \phi\left(\frac{Z}{\nu_0}\right)+\tilde a_0(Z) \right),
    \\
    &v_0(X,Y,Z) = -\Omega X, \qquad \theta_0(X,Y,Z)=0, 
\end{split}
\end{equation*}
where $a_0$ is defined as in Theorem \ref{theorem:critical}. Then the unique analytic solution blows up at time $T>0$ stated in Theorem \ref{theorem:critical}.
\end{corollary}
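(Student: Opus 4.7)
The plan is to reduce the full $3D$ problem to the $1D$ model \eqref{1D-model-intro} via the ansatz \eqref{special-form} together with Remark \ref{remark:rotation}, and then to invoke Theorem \ref{theorem:critical} directly.

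First, I would pick the perturbation $\tilde a_0$ in Theorem \ref{theorem:critical} to be real-analytic on $[0,1]$ with $\|\tilde a_0\|_{C^2([0,1])}\leq \kappa$; since $\phi(Z/\nu_0)=e^{-Z/\nu_0}$ is entire, this renders $a_0$, and hence the lifted data $u_0(X,Y,Z)=-Xa_0(Z)$, $v_0(X,Y,Z)=-\Omega X$, $\theta_0\equiv 0$, real-analytic on $\mathcal D$. The compatibility of these data with \eqref{PE-system-intro-BC} is built into the hypothesis of the corollary, and the analytic local well-posedness theory of \cite{KTVZ11} then furnishes a unique analytic solution $(u,v,\theta)$ on some maximal time interval $[0,T_{\max})$.

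Next, I would use the $1D$ profile $a(t,Z)$ produced by Theorem \ref{theorem:critical} to construct the explicit candidate
\[
\hat u(t,X,Y,Z)=-Xa(t,Z),\qquad \hat v(t,X,Y,Z)=-\Omega X,\qquad \hat \theta(t,X,Y,Z)=0,
\]
together with $\hat w(t,X,Y,Z)=-\int_0^Z \pa_X\hat u(t,X,\tilde Z)\,d\tilde Z=\int_0^Z a(t,\tilde Z)\,d\tilde Z$ and a pressure $\hat p$ determined by $\hat p_Z=0$ and \eqref{EQpxx}. Substituting into \eqref{PE-system-intro} and re-running the derivation that produced \eqref{1D-model-intro1}, combined with the observation of Remark \ref{remark:rotation}, verifies that $(\hat u,\hat v,\hat \theta,\hat w,\hat p)$ solves the system. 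The boundary conditions \eqref{PE-system-intro-BC} are preserved: $\hat w$ vanishes at $Z=0$ and, by \eqref{1D-model-intro2}, also at $Z=1$, while the integral condition on $(\hat u,\hat v)$ is inherited from that on $(u_0,v_0)$. Provided $a(t,\cdot)$ remains real-analytic in $Z$ on $[0,T)$, this candidate lies in the analytic class, and the uniqueness statement of \cite{KTVZ11} forces $(u,v,\theta)\equiv(\hat u,\hat v,\hat \theta)$ on $[0,\min(T_{\max},T))$.

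The blow-up conclusion then follows immediately: by \eqref{smooth:1}--\eqref{smooth:2},
\[
\|\pa_X u(t,0,\cdot)\|_{L^\infty([0,1])}=\|a(t,\cdot)\|_{L^\infty([0,1])}\sim (T-t)^{-1}\longrightarrow\infty\quad\text{as } t\uparrow T,
\]
so the analytic solution cannot be extended past $T$, which together with the identification above forces $T_{\max}=T$. The step I expect to be the main obstacle is the propagation-of-analyticity statement used to invoke uniqueness in \cite{KTVZ11}: Theorem \ref{theorem:critical} is proved in a $C^2$ framework and does not \emph{a priori} assert that $a(t,\cdot)$ stays real-analytic in $Z$ on $[0,T)$. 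I would address this by a Cauchy--Kowalewski type argument applied directly to the scalar integro-differential equation \eqref{1D-model-intro}, whose nonlocal coefficients are integrals of $a$ and $a^2$ and therefore cause no loss of analyticity, and then match the resulting analytic solution to the $C^2$ one from Theorem \ref{theorem:critical} by uniqueness in the analytic class.
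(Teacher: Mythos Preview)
Your proposal is correct and follows essentially the same approach as the paper: the paper does not give a separate proof of the corollary but simply states (in the sentence preceding it) that it follows from the analytic well-posedness of \cite{KTVZ11}, Theorem \ref{theorem:critical}, and Remark \ref{remark:rotation}. Your argument fleshes out precisely this combination, and in fact you are more careful than the paper in flagging and resolving the analyticity-propagation issue for $a(t,\cdot)$, which the paper leaves implicit.
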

%\begin{remark}
% We want to emphasize that the initial condition and the perturbation we consider in the corollary are in a special class. In particular, the variables $X$ and $Z$ can be separated, and the perturbation only appears in $Z$. The more general initial data and perturbation will be investigated in the future work.
%\end{remark}

In the second result, we study a blowup regime where the pressure can be neglected around the blowup time. We show that this can happen through a two-parameters $(\beta,\tilde \nu)$ family of non-smooth profiles.

\begin{theorem}[Non-smooth blowup]\label{nonsmooth}
For any $\beta>0$, there exists a non-smooth profile function $ \phi_\beta\in\mathcal C^\frac1{\beta+1}([0,\infty))$ with $\phi_\beta>0$, $\phi_\beta(0)=1$ and $\phi_\beta$ decreasing on $[0,\infty)$, and a constant $\delta>0$ such that the following holds true.

For any $\tilde \nu_0^*>0$, there exists $\lambda^{*}_0>0$ and for all $0<\lambda_0\leq \lambda^*_0$, a constant $\kappa>0$, such that if initially
\be \label{nonsmooth:id:initial}
a_0( Z)=\frac{1}{\lambda_0} \phi_\beta \left(\frac{Z}{\nu_0}\right)+\tilde a_0(Z), \ \ 0\leq Z \leq 1,
\ee
with
\be \label{nonsmooth:id:initialcond}
0<\lambda_0\leq \lambda^*_0,\quad 0<\tilde \nu_0=\frac{\nu_0}{\lambda_0^\beta}\leq \tilde \nu^*_0,\quad
\mbox{and}\quad\left\|\tilde a_{0}\right\|_{C^{1}([0,1])} \leq \kappa,
%\quad (\alpha=\frac1{\beta+1})
\ee
then there exist $T>0$, $\tilde \nu_\infty>0$, $C>0$ such that the solution $a$ to \eqref{1D-model-intro} with $a(t=0)=a_0$ blows up at time $T>0$ with:
\be \label{nonsmooth:1}
a(t, Z)=\frac{1}{T-t} \phi_\beta \left(\frac{Z}{\nu(t)}\right)+\tilde a(t, Z), \qquad \mbox{with}\quad \nu(t)=\tilde \nu_\infty( T-t)^\beta,
\ee
where for all $t\in [0,T)$:
\be \label{nonsmooth:2}
\|\tilde a (t)\|_{L^{\infty}([0,1])} \leq C(T-t)^{-1+\delta}.
\ee
\end{theorem}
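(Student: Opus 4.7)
}
The plan is to follow a standard modulation/self-similar stability scheme, adapted to a first-kind scaling law $\nu(t)\sim (T-t)^\beta$ and to a profile with only H\"older regularity at the origin. The argument splits into three blocks: (i) construct the self-similar profile $\phi_\beta$; (ii) reduce the PDE in self-similar coordinates to a two-parameter modulation system plus a dissipative remainder equation; (iii) close a bootstrap in a norm fitted to the $C^{1/(\beta+1)}$ regularity of $\phi_\beta$.

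First, I would obtain $\phi_\beta$ by inserting the ansatz $a(t,Z)=(T-t)^{-1}\phi(Z/\nu(t))$ with $\nu(t)=\tilde\nu_\infty(T-t)^\beta$ into \eqref{1D-model-intro1} after discarding the pressure. This yields the profile ODE
\begin{equation*}
\phi-\phi^2+\Bigl(\beta z+\int_0^z\phi\Bigr)\phi'=0,\qquad \phi(0)=1,
\end{equation*}
whose solution behaves near the origin as $\phi(z)=1-c_\beta z^{1/(\beta+1)}+o(z^{1/(\beta+1)})$ by direct asymptotic matching (the exponent $1/(\beta+1)$ is forced by requiring that the $\phi-\phi^2$ and $(\beta z+\int_0^z\phi)\phi'$ terms balance at leading order; this produces the regularity announced in the theorem). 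Local existence would come from a contraction argument for $\phi(z)-1+c_\beta z^{1/(\beta+1)}$ in a weighted $C^0$ ball near $z=0$; global extension, positivity, monotonicity and polynomial decay at infinity then follow from monotonicity arguments on the ODE $\phi'=(\phi^2-\phi)/(\beta z+\int_0^z\phi)$. The scale invariance $\phi(\cdot/\mu)$ of the pressure-free profile equation shows that the dilation $\tilde\nu$ is a genuine second parameter, explaining the $\tilde\nu_\infty$ in \eqref{nonsmooth:1}.

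Second, I would pass to self-similar variables $b(s,z)=\lambda(t)a(t,\nu(t)z)$, $ds/dt=1/\lambda(t)$, and decompose $b=\phi_\beta+\tilde b$, keeping $\lambda$ and $\tilde\nu:=\nu/\lambda^\beta$ as modulation parameters. The equation becomes
\begin{equation*}
\partial_s\tilde b=\mathcal L_\beta\tilde b+\mathrm{NL}(\tilde b)+\bigl(1+\tfrac{\lambda_s}{\lambda}\bigr)(\phi_\beta+\tilde b)-\tfrac{\tilde\nu_s}{\tilde\nu}\,z\,\partial_z(\phi_\beta+\tilde b)-I(s),
\end{equation*}
where $\mathcal L_\beta$ is the linearization of the profile operator and $I(s)=2\nu(s)\int_0^{1/\nu}b^2\,dz=O(\nu)=O(e^{-\beta s})$ carries the pressure. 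The scaling symmetries generate two explicit neutral directions of $\mathcal L_\beta$, namely $\phi_\beta$ and $z\phi_\beta'$; I would impose two orthogonality conditions on $\tilde b$ against suitable dual elements to kill these modes, which produces modulation ODEs of the form $\lambda_s/\lambda=-1+O(e^{-\delta s})$ and $\tilde\nu_s/\tilde\nu=O(e^{-\delta s})$. Integrating gives $\lambda(t)\simeq T-t$ and $\tilde\nu(s)\to\tilde\nu_\infty>0$, hence $\nu(t)\simeq\tilde\nu_\infty(T-t)^\beta$. On the orthogonal complement of the neutral modes I would prove a spectral gap $\langle\mathcal L_\beta\tilde b,\tilde b\rangle\leq-\delta\|\tilde b\|^2$ in an adapted weighted $C^1$-type norm (chosen so as to tame the vanishing characteristic $\beta z+\int_0^z\phi_\beta$ and the H\"older singularity of $\phi_\beta$ at $z=0$), and then close a bootstrap: the linear decay, the $O(e^{-\beta s})$ smallness of $I(s)$, and the quadratic nonlinearity together force $\|\tilde b(s)\|\lesssim e^{-\delta s}$, which translates back into \eqref{nonsmooth:2}. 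The blow-up time $T$ is fixed a posteriori by the divergence of $\int^t d\tau/\lambda(\tau)$.

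The main obstacle I expect is the tension between the low regularity of $\phi_\beta$ at $z=0$ and the transport-type structure of $\mathcal L_\beta$, whose drift $\beta z+\int_0^z\phi_\beta$ vanishes exactly at the origin. Standard high-Sobolev energy methods are unavailable because $\phi_\beta\notin C^1$; on the other hand, the hypothesis $\|\tilde a_0\|_{C^1}\leq\kappa$ says the perturbation is \emph{smoother} than the profile, and it is precisely this extra regularity gap that must be exploited to control the degenerate transport term and to derive the spectral gap. Designing a weighted norm that simultaneously (a) is coercive for $\mathcal L_\beta$ on the complement of the two neutral modes, (b) is compatible with the $C^1$ regularity of the perturbation on $[0,1]$, and (c) absorbs the pressure error $I(s)$ and the boundary contribution coming from the finite interval $Z\in[0,1]$, is the technical heart of the argument.
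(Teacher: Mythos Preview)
Your plan is correct in outline and close to the paper's, but two concrete technical choices differ. First, the paper does not use integral orthogonality against dual elements: it fixes $(\lambda,\nu)$ by the pointwise conditions $\varepsilon(s,0)=0$ and $\partial_z\varepsilon(s,z)=O(z^{2/(\beta+1)-1})$ as $z\downarrow 0$, exploiting the expansion $\phi_\beta(z)=1-z^{1/(\beta+1)}+z^{2/(\beta+1)}+\ldots$ so that the modulation forcing cancels to one order higher than the profile's leading singularity; this gain is exactly what makes the residual source $\beta\phi'+(\beta+1)z\phi''=O(z^{2/(\beta+1)-1})$ integrable against the chosen weight. Second, the paper does not close in a single weighted $C^1$-type norm but splits $[0,1/\nu]$ into an interior zone $[0,z^*]$, controlled by the weighted energy $\int_0^{z^*} z^\alpha e^{-Kz}\,|\partial_z\varepsilon|^2\,dz$ (with $\alpha=\alpha(\beta,\eta)\in(-1,1)$ tuned to $\beta$ and $K$ taken large to absorb the nonlocal terms via explicit Cauchy--Schwarz bounds), and an exterior zone $[z^*,1/\nu]$, controlled in $L^\infty$ by a comparison/maximum-principle argument after checking the transport field is outgoing at $z^*$. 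The weighted energy handles the H\"older singularity and the degenerate transport at $z=0$; the $L^\infty$ estimate handles the growing domain and the unit-size pressure contribution far from the origin. Your orthogonality-plus-single-norm route is plausible in principle, but it would have to reconcile both regimes simultaneously and produce a spectral gap without the explicit weight/commutator computations the paper relies on; the paper's approach buys concreteness at the cost of the zone-matching step.
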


\begin{remark}
\hfill \break
\begin{itemize}
   \item The profile $\phi_\beta$ is almost explicit, see Proposition \ref{proposition:profile}. The parameter $\tilde \nu$ comes from the fact that for a fixed $\beta>0$ the full family of blow-up profile is $(\phi_{\beta,\tilde \nu})_{\tilde \nu>0}$ where $\phi_{\beta,\tilde \nu}(z)=\phi_{\beta }(z/\tilde \nu)$.   
\item The fact that it is not smooth is crucial: the soft\footnote{Soft in the sense that it still allows for local in time existence of a solution.} singularity of the initial data plays a crucial role in the mechanism underlying the (worst) singularity at time $T$. This is a feature of the hyperbolic nature of \eqref{1D-model-intro}. Similar soft singularities playing a role during finite time blowup are found for the Burgers equation \cite{CGM}, the nonlinear wave equation \cite{Kr}, and the incompressible Euler equations \cite{ElGindi21,EGM19}. However, the construction of stable smooth blowup seems in general to be more challenging. For instance, it still remains open for the 3D Euler equations.

    \item Compared to Theorem \ref{theorem:critical}, the modulation equations for the scale parameters $(\lambda,\nu)$ are:
   \be \label{eq:modulationintro2}
   \left\{\begin{array}{l l} \lambda_t=-1+C_\beta \nu +h.o.t., \\ \nu_t=-\beta \frac{\nu}{\lambda}-C_\beta(\beta+1) \frac{\nu^2}{\lambda}+h.o.t., \end{array} \right. \qquad C_\beta= 2\int_0^\infty \phi_\beta^2(z)dz,
   \ee
   for which the pressure effects (the second terms in the right-hand side of \eqref{eq:modulationintro2}) are negligible. Comparing \eqref{eq:modulationintro2} and \eqref{eq:modulationintro}, we see that the $\beta=0$ case \eqref{eq:modulationintro} is \emph{critical} for the role of the pressure which drives the blow-up scale $\nu$ to $0$ and slightly slows down the blow-up speed (through the $C_0 \nu$ term in \eqref{eq:modulationintro}). Interestingly, this critical regime corresponds in fact to a stable blow-up scenario among smooth solutions.
\end{itemize}
\end{remark}

\section{Strategy of the proof and organization of the paper}\label{sec:organization}

To explain the idea of our proof, let us first aim at studying blow-up solution under the assumption that 
\be \label{hp}
\mbox{Hypothesis:}\quad \mbox{Assume } \int_0^1a^2\;dZ \mbox{ is of lower order and the confinement }Z\in [0,1] \mbox{ is irrelevant}.
\ee
In such a case, the dynamics is given to the leading order by:
\begin{equation} \label{eq:a:simplified}
    \hskip-.8in a_t -a^2 + \Big(\int_0^Z a(t,\widetilde{Z})d\widetilde{Z}\Big) a_Z  =0,\quad 0\leq Z<\infty.
\end{equation}
Note that if $a(t,Z)$ is a solution of system \eqref{eq:a:simplified} then, for any $\mu, \lambda > 0$, $\frac1\lambda a(\frac t\lambda,\frac Z\mu)$ is also a solution. We look for a self-similar solution to \eqref{eq:a:simplified} of the form
\begin{eqnarray}\label{ansatz}
a(t,Z)=\frac1{T-t}\phi_\beta \Big(\frac Z{(T-t)^\beta}\Big).
\end{eqnarray}
Denote by 
$$
z=\frac Z{(T-t)^\beta},
$$
then the profile $\phi_\beta$ has to satisfy
$$
(\beta z + \partial_z^{-1} \phi_\beta) \frac{d\phi_\beta}{dz} - \phi^2_\beta + \phi_\beta = 0.
$$
Note that for the Ansatz \eqref{ansatz}, $a_t$, $a^2$, and $\big(\int_0^Z a(t,\tilde Z)d\tilde Z\big) a_Z$ are of size $(T-t)^{-2}$, and that $\int_0^1a^2 \;dZ$ is of size $(T - t)^{-2+\beta}$. Hence, Hypothesis \eqref{hp} would be satisfied only in the {\it subcritical case} $\beta>0$. However, when $\beta=0$, note that, for any $\nu > 0$, the function
$$
a(t,Z)= \frac1{T-t}\phi_0\Big(\frac Z\nu\Big)
$$
is a solution to \eqref{eq:a:simplified}. The term $\int_0^1 a^2\;dZ$ is of size $\nu (T-t)^{-2}$, and $a_t$, $a^2$, and $\big(\int_0^Z a(t,\tilde Z)d\tilde Z\big) a_Z$ are of size $(T-t)^{-2}$. Hence, the Hypothesis \eqref{hp} is satisfied when the effect of the term $\int_0^1a^2\;dZ$ is to drive the parameter $\nu\to 0$ as $t\to T$.\\

\noindent The paper is organized as follows. Section \ref{sec:profile} is devoted to the construction in Proposition \ref{proposition:profile} of our family of profiles $(\phi_\beta)_{\beta>0}$. The proof is similar to Proposition 3.2. in \cite{CGIM18}. In section \ref{sec:nonsmooth} we construct the stable blowup in the non-smooth case, where the decay is exponential in time in similarity variables. The proof uses a {\it bootstrap} argument by defining a basin of attraction of the blowup in Definition \ref{definition:trap-subcritical}. Our proof goes in three steps. First, we find and solve the modulation equations in Lemma \ref{lemma:modulation} by imposing suitable zero boundary conditions on the perturbation. Next, we estimate the remainder $\tilde a$ in an interior region in Lemma \ref{lemma:interior}. We use a Lyapunov functional with a spatial weight that penalizes the nonlocal terms inspired from \cite{CGIM18}. Its decay in time is due to the repulsivity of the transport field and potential terms generated by the $\phi_\beta$ profile, which results in a spectral-gap like coercivity, in analogy with \cite{MRRS}. Last, we estimate the solution in an exterior region away from the singularity in Lemma \ref{lemma:main} using the maximum principle. Following the same steps of section \ref{sec:nonsmooth}, in section \ref{sec:smooth} we give the proof of Theorem \ref{theorem:critical}. The key difference is due to the modulation equations in Lemma \ref{lemma:smoothmodulation}, which need to go up to quadratic order in the $\nu$ equation. The decay rate of the remainder becomes algebraic in this case. A slightly more refined analysis is needed.

\section{Profiles}\label{sec:profile}

For $\beta\geq0$, consider the profile equation
\begin{eqnarray}\label{profile}
(\beta z + \partial_z^{-1} \phi) \frac{d}{dz}\phi - \phi^2 + \phi = 0,\quad z\in[0,\infty),
\end{eqnarray}
where $\partial_z^{-1} \phi(z)=\int_0^z \phi(\tilde z)d\tilde z$. Note that equation \eqref{profile} has a scaling invariance: if $\phi$ solves \eqref{profile} then so does $\phi_{\tilde \nu}$ defined as $\phi_{\tilde \nu}(z)=\phi(z/\tilde \nu)$ for any $\tilde \nu>0$. The classification of bounded solutions to \eqref{profile} is given by:

\begin{proposition}\label{proposition:profile}
\hfill \break
\begin{itemize}
    \item When $\beta>0$, equation
 \eqref{profile} has a solution $\phi_\beta\in C([0,\infty))\cap C^\infty ((0,\infty))$ satisfying for a constant $d_\beta>0$

\begin{eqnarray}\label{asympt z to 0}
\phi_{\beta}(z)=
1-z^{\frac1{\beta+1}}+z^\frac2{\beta+1}+o(z^\frac2{\beta+1}),\quad\text{as}\quad z\to0,
%\frac1{1+K(\frac{\beta+1}{C}z)^\frac1{\beta+1}}+o(z^{\frac1{\beta+1}}),\quad\text{as}\quad z\to0,
\end{eqnarray}
and 
\begin{eqnarray}\label{asympt z to infty}
\phi_{\beta }(z)= d_\beta z^{-\frac1\beta}+o(z^{-\frac1\beta}),\quad\text{as}\quad z\to\infty.
\end{eqnarray}

\item When $\beta=0$, equation \eqref{profile} has the explicit solution:
$$
\phi_0(z)=e^{-z}.
$$

\item For all $\beta \geq 0$, if $\phi \in C([0,\infty))\cap C^1 ((0,\infty))$ solves \eqref{profile}, then either $\phi$ is unbounded on $[0,\infty)$, or $\phi$ is constant equal to $0$ or $1$, or there exists $\tilde \nu>0$ such that $\phi(z)=\phi_\beta (z/\tilde \nu)$ for all $z\geq0$.

\end{itemize} 
\end{proposition}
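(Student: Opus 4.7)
The plan is to follow the strategy used for the analogous classification in \cite{CGIM18}. The starting observation is that evaluating \eqref{profile} at $z = 0$ forces $\phi(0)(\phi(0) - 1) = 0$, so $\phi(0) \in \{0, 1\}$. Writing the ODE in explicit form
$$\phi'(z) = \frac{\phi(\phi-1)}{\beta z + \int_0^z \phi(\tilde z)\, d\tilde z},$$
a sign analysis rules out non-constant bounded solutions with $\phi(0) = 0$: at any first point where $\phi$ would depart from $0$ into $(0,1)$ the right-hand side is strictly negative, preventing the departure, and entering $(1,\infty)$ from below would first require crossing $(0,1)$, which is excluded. So the classification reduces to the case $\phi(0) = 1$, modulo the scaling invariance $\phi \mapsto \phi(\cdot/\tilde\nu)$. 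The explicit case $\beta = 0$ is then a direct verification: with $\phi_0 = e^{-z}$ one has $\int_0^z e^{-\tilde z}\, d\tilde z = 1 - e^{-z}$, and both sides of \eqref{profile} reduce to $e^{-2z} - e^{-z}$.

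For $\beta > 0$, the crux is constructing $\phi_\beta$ locally near $z = 0$, because the equation is degenerate there and $\phi_\beta$ is only $C^{1/(\beta+1)}$ at the origin. I would desingularize via the change of variable $s = z^{1/(\beta+1)}$ and set $w(s) = 1 - \phi(s^{\beta+1})$. The equation becomes
$$\bigl(s - s^{-\beta} W(s)\bigr)\, w'(s) = (\beta+1)\, w(s)(1 - w(s)), \qquad W(s) = \int_0^s w(\tilde s)\, \tilde s^\beta\, d\tilde s,$$
whose transport coefficient $s - s^{-\beta} W(s) = s + O(s^2)$ has only a simple zero at $s = 0$. Fixing the scaling so that the leading term is $w(s) = s + O(s^2)$, I would determine the formal power series $w(s) = s + \sum_{k \geq 2} a_k s^k$ by a term-by-term recursion and prove convergence on a small interval $[0,\varepsilon]$ by either a majorant/Cauchy--Kovalevskaya argument or a contraction fixed-point argument for the integrated equation in a weighted space such as $\{w : |w(s) - s| \leq M s^2\}$. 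Translating back to $z$ produces $\phi_\beta \in C([0,\infty)) \cap C^\infty((0,\infty))$ locally near $0$ with the expansion \eqref{asympt z to 0}.

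Global extension and the decay \eqref{asympt z to infty} follow from monotonicity and an asymptotic analysis. As long as $\phi \in (0,1)$, the denominator $\beta z + \Phi$ remains strictly positive and $\phi(\phi-1)$ strictly negative, so $\phi' < 0$ and $\phi$ stays trapped in $(0,1)$; continuation yields existence on $[0,\infty)$, and $L := \lim_{z\to\infty}\phi(z) \in [0,1)$ must satisfy $L(L-1) = 0$, forcing $L = 0$. For \eqref{asympt z to infty} I would introduce $\eta(z) = z^{1/\beta} \phi(z)$, derive the autonomous-in-$\log z$ ODE it satisfies using the dominant balance $\beta z \phi' \sim -\phi$ at infinity, and conclude that $\eta \to d_\beta$ for some $d_\beta > 0$. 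Classification of arbitrary bounded $C^1$ solutions then follows: $\phi(0) = 0$ forces $\phi \equiv 0$ by the sign argument above, while $\phi(0) = 1$ either gives $\phi \equiv 1$ or, after normalizing the leading coefficient in \eqref{asympt z to 0} via scaling, coincides with $\phi_\beta$ by local uniqueness of the desingularized fixed-point problem and ODE uniqueness on $(0, \infty)$.

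The main obstacle will be the local construction at $z = 0$. The degeneracy of the equation at the origin allows for many formal series solutions with the correct leading exponent $1/(\beta+1)$, so one must (i) identify the one-parameter family of bounded profiles, (ii) prove convergence of the chosen expansion in a setting where the non-local integral term $W$ is controlled by the linear transport $s$, and (iii) establish uniqueness within that family up to the scaling $\tilde \nu$. This is the step that mirrors most directly the corresponding argument in \cite{CGIM18}, and where the bulk of the technical care is required.
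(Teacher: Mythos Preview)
Your approach is genuinely different from the paper's, and it is worth knowing the paper's method because it is both shorter and more informative. Rather than a local power-series construction near $z=0$ followed by a separate asymptotic analysis at infinity, the paper finds an \emph{exact implicit formula} for $\phi_\beta$. After the preliminary sign analysis (establishing $0<\phi<1$ on $(0,\infty)$ for any bounded nontrivial solution), one introduces a new variable $\xi(z)$ via
\[
\frac{d\xi}{dz}=\frac{\xi}{\beta z+\int_0^z\phi},\qquad \xi(z_1)=1,
\]
and sets $H(\xi)=\phi(z)$. The profile equation becomes the linear first-order ODE $\xi H'+H-H^2=0$, with explicit solution $H=(1+\xi)^{-1}$. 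One then computes $z$ as a function of $\xi$:
\[
z(\xi)=C\int_0^\xi\frac{u^\beta}{1+u}\,du,
\]
for an integration constant $C$ fixed by the choice of $z_1$ (equivalently, by the scaling $\tilde\nu$). Both asymptotics \eqref{asympt z to 0} and \eqref{asympt z to infty} and the classification then drop out of this single formula by expanding the integral near $\xi=0$ and $\xi=\infty$. This bypasses entirely the convergence-of-series and fixed-point arguments you propose, and the uniqueness question you flag as the main obstacle is resolved for free: the only freedom in the construction is the constant $C$, which is exactly the scaling parameter.

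Your route would work, but two points need attention. First, your sign analysis ruling out non-constant bounded solutions with $\phi(0)=0$ only excludes departure into $(0,1)$ and $(1,\infty)$; you do not discuss $\phi$ going negative. The paper handles this via a more systematic phase-plane partition (regions $\{\phi<0,\ \pm\psi>0\}$ are shown to force unboundedness). Second, as you yourself note, the local uniqueness at the degenerate point $z=0$ in your framework is the delicate step; the paper's change of variables sidesteps this because the equation for $H$ in $\xi$ is regular at $\xi=0$.
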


\begin{remark}

Note that $\int_{0}^{\infty} \phi^{2}$ is finite if and only if $0\leq \beta<2$ while $\int_{0}^{\infty} (\phi')^{2}$ is finite if and only if $0<\beta<1$. Note that there might exist solutions to \eqref{profile} on $[0,\infty)$ that are unbounded, but they are however not relevant for our purpose. 

\end{remark}

\begin{proof}

We first consider $\beta>0$, and assume that $\phi=\phi_\beta \in C([0,\infty))\cap C^1 ((0,\infty))$ solves \eqref{profile} and is bounded. \\

\noindent \textbf{Step 1.} \emph{Preliminary properties}. We claim that either $\phi=0$ or $\phi=1$, or $0<\phi(z)<1$ for all $z>0$. 
To prove it, we let $\psi(z)=\beta z+\int_0^z \phi(\tilde z)d\tilde z$, and claim that $\psi(z_0)\neq 0$ for some $z_0>0$. Indeed, if not then $\pa_z \psi=0$, so that $\phi=-\beta$. But this is not a solution to \eqref{profile}. Then, defining $X=(\phi,\psi)$ with $\frac{d}{dz} X=(\pa_z \phi,\pa_z \psi)$ and $ X(z_0)=X_0$, we have that $X$ solves the following ODE whenever $\psi\neq 0$:
\begin{equation} \label{id:ode profile}
\left\{ \begin{array}{l l} \frac{d}{dz} X=(\frac{\phi^2-\phi}{\psi},\beta+\phi),  \\
X_0=(\phi(z_0),\psi(z_0)) .\end{array} \right.
\end{equation}
Note that \eqref{id:ode profile} has the explicit solutions $(0,C+\beta z)$ and $(1,C+(\beta+1) z)$ for some $C\in \mathbb R$. If $X$ is one of such solutions, since $\psi(z)=\beta z+\int_0^z \phi(\tilde z)d\tilde z$, then $C=0$ and these are the $\phi=0$ or $\phi=1$ solutions to \eqref{profile}, respectively. 

Next, We introduce the sets
\begin{align*}
Z_1^\pm =\{0<\phi<1, \ \pm \psi>0\}, \quad Z_2^\pm =\{\phi>1, \ \pm \psi>0\}, \quad Z_3^\pm =\{\phi<0, \ \pm \psi>0\}.
\end{align*}
Assuming that $\phi\neq 0,1$, there exists $i\in \{1,2,3\}$ and $\iota \in \{\pm\}$ such that $X_0\in Z_i^\iota$. We claim that all cases except $X_0\in Z_1^+$ lead to a contradiction, which will prove the claim of Step 1.

If $X_0\in Z_1^-$ or $X\in Z_2^-$, then we notice by a direct check that both sets are invariant by the backward flow of \eqref{id:ode profile}, so that $X(z)\in Z_1^-$ or $X(z)\in Z_2^-$ for all $0<z<z_0$. This implies $\pa_z \psi>0$ for $z<z_0$ and hence $\psi(z)\leq \psi(z_0)<0$ which contradicts $\psi(z)=\beta z+\int_0^z \phi \rightarrow 0$ as $z\to 0$.

If $X_0\in Z_2^+$, then this set is invariant by the forward flow of \eqref{id:ode profile}, so that $X(z)\in Z_2^+$ for all $z>z_0$. This implies $\pa_z \phi(z)>0$ and hence $1<\phi(z_0)\leq \phi(z)$ for all $z\geq z_0$. Moreover, by the boundness of $\phi$, we know $\phi(z)\leq C$ for some $C>1.$ %Also, $\pa_z \psi\geq \beta$ and $\psi (z_0)>0$ so that $\psi(z)\geq \beta (z-z_0)$ for $z\geq z_0$. 
We then get using \eqref{id:ode profile} that for $z\geq z_0$,
$\pa_z \phi(z) \geq \frac{\phi^2(z_0)-\phi(z_0)}{\psi(z_0)+ (\beta + C )(z-z_0)} \gtrsim \frac1{1+z}.$ Integrating this inequality gives $\phi(z) \rightarrow +\infty$ as $z\rightarrow \infty$, a contradiction since $\phi$ is bounded.

If $X_0\in Z_3^+$, then we first claim that $X(z)\in Z_3^+$ for all $0<z\leq z_0$. By contradiciton, if not there would exist $0<z_1<z_0$ such that $X(z)\in Z_3^+$ for $z_1<z\leq z_0$ and $X(z_1)\notin Z_3^+$. In this case, $\phi'(z)>0$ for all $z\in (z_1,z_0]$ so that $\phi(z)\leq \phi(z_0)<0$. As $X(z_1)\notin Z_3^+$ and $\phi(z_1)<0$ we must have $\psi(z_1)=0$. Since $\psi>0$ on $Z_3^+$, and $\psi(z)=\beta z+\int_0^z \phi$ with $\phi$ is continuous, we obtain that $0<\psi(z)\leq C (z-z_1)$ on $(z_1,z_0]$ for some constant $C>0$. Hence for $z\in (z_1,z_0]$ we have using \eqref{id:ode profile} that $\pa_z \phi \geq \frac{\phi^2(z_0)-\phi(z_0)}{C(z-z_1)}$, which integrated with $z$ implies $\phi(z) \rightarrow -\infty$ as $z\downarrow z_1$, a contradiction.

If $X_0\in Z_3^-$, we get similarly as for the $Z_3^+$ case that $X\in Z_3^-$ on $[z_0,\infty)$. Then $\pa_z \phi<0$ for $z\geq z_0$, and $\phi$ diverges to $-\infty$ by an argument similar to that used for $Z_2^+$, giving again a contradiction.\\

\noindent \textbf{Step 2.} \emph{Exact formula}. We now assume thanks to Step 1 that $0<\phi(z)<1$ for all $z>0$ and hence $\phi'<0$ and $\psi>0$ on $(0,\infty)$. With similar arguments as in Step 1 above, we obtain that $\phi(z)\rightarrow 0$ as $z\to \infty$, and that $\phi(z)\rightarrow 1$ as $z\to 0$. In particular there exists $z_1>0$ such that 
$$
\phi(z_1)=\frac 12.
$$
We now perform the change of variables on $[0,+\infty)$ by defining $\xi$ such that :
\begin{equation} \label{id:changevar}
\frac{d\xi}{dz}=\frac{\xi}{\beta z+\int_0^z \phi(\tilde z)d\tilde z}, \quad \xi(z_1)=1, \quad \mbox{and}\quad \ H(\xi):=\phi(z),
\end{equation}
so that the equation \eqref{profile} becomes $H-H^2+\xi\partial_\xi H=0$ whose solution, since $H(1)=1/2$, is
\begin{equation} \label{id:inter2}
H=(1+\xi)^{-1}.
\end{equation}
Now, differentiating \eqref{id:changevar} gives
$$
\frac{d^2 z}{d\xi^2} =\frac{d}{d\xi}\left[\frac{\beta z+\int_0^z \phi(\tilde z)d\tilde z}{\xi} \right]= -\frac 1 \xi \frac{dz}{d\xi}+\frac{dz}{d\xi}\frac{\beta+\phi(z)}\xi
=\frac{dz}{d\xi}\left[\frac\beta\xi-\frac 1{1+\xi}\right].
$$
After integration this yields, for $C$ an integration constant:
\begin{equation} \label{id:inter}
\frac{dz}{d\xi}= \frac{C\xi^\beta}{\xi+1}.
\end{equation}
Using \eqref{id:changevar} and $0<\beta z+\int_0^z \phi(\tilde z)d\tilde z<(1+\beta)z$ we obtain that $\lim_{z\downarrow 0} \log \xi(z)= -\int_0^{z_1} [\beta z+\int_0^z \phi(\tilde z)d\tilde z]^{-1}dz=-\infty$ so that $\lim_{z\downarrow 0}\xi(z)=0$. Moreover, we recall that $z(\xi=1)=z_1$. These two considerations and \eqref{id:inter} give:
\begin{equation} \label{id:inter3}
z(\xi)=C \int_0^\xi \frac{\tilde \xi^\beta}{\tilde \xi+1} d\tilde \xi, \qquad C=z_1 C_\beta, \quad C_\beta = \left(\int_0^1 \frac{\tilde \xi^\beta}{\tilde \xi+1} d\tilde \xi\right)^{-1}.
\end{equation}
For any $z_1>0$, the identities \eqref{id:changevar}, \eqref{id:inter2} and \eqref{id:inter3} provide solutions to \eqref{profile}, which are equal up to the scaling transformation $\phi(\cdot) \mapsto \phi(\cdot/\tilde \nu)$. We also proved these are the only possible bounded solutions. To get the asymptotics of $\phi$ at $z\to0$, and $z\to\infty$, we integrate \eqref{id:inter3}:
$$
z(\xi)= C \int_0^\xi\frac{u^\beta}{1+u}\;du=
C \sum_{j\geq0}\frac{(-1)^j}{\beta+j+1}\xi^{\beta+j+1}
%\frac C{\beta+1}\xi^{\beta+1}+O(\xi)
,\; \text{for}\; 0\leq \xi<1. 
$$
This implies that

$$
\xi=\big(\frac{\beta+1} C  \big)^\frac1{\beta+1}z^\frac1{\beta+1}+O(z^{1+\frac1{\beta+1}}),
$$
and consequently, using \eqref{id:changevar} and \eqref{id:inter2}:
$$
\phi(z)=\frac1{1+z^\frac1{\beta+1}\left(\big(\frac{\beta+1}C\big)^\frac1{\beta+1}+O(z)\right)}
$$
which implies \eqref{asympt z to 0} upon choosing $z_1=\frac{\beta+1}{C_\beta}$. Similarly, one can derive the asymptotic as $\xi\;(\text{equivalently}\;z)\to\infty$. \\

\noindent \textbf{Step 3.} \emph{The case $\beta=0$}: The case $\beta=0$ can be solved explicitly by separating variables in the differential equation \eqref{profile}. Indeed, letting $\psi=\partial_z^{-1}\phi$, we get
$$
\psi\psi''=\psi'(\psi'-1)
$$
giving the one-parameter family of solutions $\psi=\tilde \nu(e^{z/\tilde \nu}-1)$ for $\tilde \nu>0$, i.e. $\phi=e^{- z/\tilde \nu}$. This finishes the proof of the proposition.
\end{proof}

For the sake of simplicity, we shall first give the proof of Theorem \ref{nonsmooth} for the case of non-smooth blowup.

\section{Non-smooth blowup}\label{sec:nonsmooth}
In this section, we focus on the case $\beta>0.$ 
\subsection{Derivation of the rescaled model in self-similar coordinates}
%In the following, instead of $a(t,z)$, we will write $a(t,Z)$ with $Z\in[0,1]$, i.e., $Z$ is the original (un-scaled) variable. 
Consider the following rescaling for $\nu$ and $\lambda$ two positive $C^1$ functions of time:
\begin{equation} \label{id:self-similarvariables}
    z = \frac{Z}{\nu(t)}, \quad \frac{ds}{dt} = \frac{1}{\lambda(t)}, \quad s(0)=s_0.
\end{equation}
The following computations are done for all $\beta\geq 0$, wih $\phi=\phi_\beta$ (we drop the $\beta$ subscript to ease notation) given by Proposition \ref{proposition:profile}. We write the solution $a(t,Z)$ of system \eqref{1D-model-intro} as
\begin{equation}\label{decomp}
    a(t,Z) = \frac{1}{\lambda(s(t))}\Big(\phi\Big(\frac Z{\nu(s(t))}\Big) + \varepsilon\Big(s(t),\frac Z{\nu(s(t))}\Big)\Big)= \frac{1}{\lambda(s)}\Big(\phi (z) + \varepsilon (s,z)\Big).
\end{equation}
From the explicit computations:
\begin{equation*}
\begin{split}
    &a_t = \frac{1}{\lambda^2} \Big( -\frac{\lambda_s}{\lambda}\phi - \phi' \frac{\nu_s}{\nu} z - \frac{\lambda_s}{\lambda} \varepsilon + \varepsilon_s - \varepsilon_z \frac{\nu_s}{\nu} z\Big),
    \\
    &a^2 = \frac{1}{\lambda^2} \Big( \phi^2 + \varepsilon^2 + 2\phi \varepsilon \Big),
    \\
    &\Big(\int_0^Z a(t,\widetilde{Z})d\widetilde{Z}\Big) a_Z = \frac{1}{\lambda^2} \Big( \partial_z^{-1} \phi \phi' + \partial_z^{-1} \phi \varepsilon_z + \partial_z^{-1} \varepsilon \phi' + \partial_z^{-1} \varepsilon \varepsilon_z \Big),
    \\
    &2 \int_0^1 a^2(Z) dZ = \frac{2\nu}{\lambda^2} \int_0^{\frac{1}{\nu}} ( \phi+ \varepsilon)^2(z) dz,
\end{split}
\end{equation*}
thanks to \eqref{profile}, system \eqref{1D-model-intro} gives
\begin{subequations}\label{equation:epsilon}
\begin{align}
    \begin{split}\label{equation:epsilon-1}
        &\varepsilon_s - \frac{\lambda_s}{\lambda}\varepsilon - \frac{\nu_s}{\nu} z \varepsilon_z - 2\phi \varepsilon + \partial_z^{-1} \phi \varepsilon_z + \partial_z^{-1} \varepsilon \phi'  - \varepsilon^2 + \partial_z^{-1} \varepsilon \varepsilon_z 
        \\
        &\qquad \qquad = (\frac{\lambda_s}{\lambda} +1)\phi + (\beta + \frac{\nu_s}{\nu})z\phi' - 2\nu \int_0^{\frac{1}{\nu}} (\phi+ \varepsilon)^2(z) dz,
    \end{split}
    \\
    &\int_0^{\frac1\nu} (\phi + \varepsilon)(z) dz = 0 \label{equation:epsilon-bc}.
\end{align}
\end{subequations}

The modulation parameters $\lambda$ and $\nu$ are determined by imposing the following vanishing for the expansion of $\epsilon$, an orthogonality-like condition:
\be \label{orthogonality}
\left\{ \begin{array}{l l}  \varepsilon(s,z=0)=0 ,\\
 \pa_z \varepsilon(s,z)=O(z^{\frac{2}{\beta+1}-1}) \qquad \mbox{as }z\downarrow 0. \end{array} \right.
\ee
In order to have the boundary condition $\varepsilon(z=0)=0$, \eqref{equation:epsilon-1} gives the first modulation equation
\begin{equation}\label{equation:modulation-1}
    \frac{\lambda_s}{\lambda} +1 = 2\nu \int_0^{\frac{1}{\nu}} (\phi + \varepsilon)^2 (z) dz.
\end{equation}
By taking the derivative of \eqref{equation:epsilon-1} with respect to $z$, one obtains
\begin{equation}\label{equation:epsilon_z}
\begin{split}
     &\varepsilon_{zs} - (\frac{\lambda_s}{\lambda} + \frac{\nu_s}{\nu} + \phi)  \varepsilon_{z}      + (\partial_z^{-1} \phi - \frac{\nu_s}{\nu} z ) \varepsilon_{zz} - \phi' \varepsilon + \partial_z^{-1} \varepsilon \phi''
     - \varepsilon \varepsilon_z + \partial_z^{-1} \varepsilon \varepsilon_{zz}
    \\
    = & (\frac{\lambda_s}{\lambda} +1 + \beta + \frac{\nu_s}{\nu})\phi' + (\beta + \frac{\nu_s}{\nu})z\phi''.
\end{split}
\end{equation}
Since near zero
$
    \phi(z) = 1 - z^{\frac{1}{\beta+1}} + z^{\frac{2}{\beta+1}} + o(z^{\frac{2}{\beta+1}}),
$
then near zero one has
\begin{equation*}
    \phi' = \frac{-1}{\beta+1} z^{\frac{-\beta}{\beta+1}} + \frac{2}{\beta+1} z^{\frac{1-\beta}{\beta+1}} + o(z^{\frac{1-\beta}{\beta+1}}), \qquad z\phi'' = \frac{\beta}{(\beta+1)^2} z^{\frac{-\beta}{\beta+1}} + \frac{2(1-\beta)}{(\beta+1)^2} z^{\frac{1-\beta}{\beta+1}} + o(z^{\frac{1-\beta}{\beta+1}}) .
\end{equation*}
The second boundary condition in \eqref{orthogonality} then gives the second modulation equation
\begin{equation}\label{equation:modulation-2}
    \frac{\nu_s}{\nu} = -\beta - (\beta+1)(1+\frac{\lambda_s}{\lambda}) = -\beta - (\beta+1) 2\nu \int_0^{\frac{1}{\nu}} (\phi + \varepsilon)^2 (z) dz.
\end{equation}
Thus, we can rewrite \eqref{equation:epsilon_z} as 
\begin{equation}\label{equation:epsilon_z-2}
\begin{split}
     &\varepsilon_{zs} - (\frac{\lambda_s}{\lambda} + \frac{\nu_s}{\nu} + \phi)  \varepsilon_{z}      + (\partial_z^{-1} \phi - \frac{\nu_s}{\nu} z ) \varepsilon_{zz} - \phi' \varepsilon + \partial_z^{-1} \varepsilon \phi''
    - \varepsilon \varepsilon_z + \partial_z^{-1} \varepsilon \varepsilon_{zz}
    \\
    = & - \Big(\beta \phi' + (\beta+1) z\phi''\Big) 2\nu \int_0^{\frac{1}{\nu}} (\phi + \varepsilon)^2 (z) dz.
\end{split}
\end{equation}

\subsection{Bootstrap Argument}
We now fix once for all $\beta >0$ and its corresponding profile $\phi=\phi_\beta$ given by Proposition \ref{proposition:profile}, and pick constants 
\be \label{id:defeta-defdelta}
0<\eta<\min(\beta,1),  \  \  \delta=2\min(\beta,1)-\eta.
\ee
Our proof of the main result goes using a bootstrap argument. We start by setting up its framework. The notation $C(a_1,a_2,\cdot\cdot\cdot,a_n)$ stands for a generic constant that depends on its arguments $a_1,\cdot\cdot,a_n$ and that may change from line to line.
Consider a solution $a$ of \eqref{1D-model-intro} written in the self-similar coordinates $(s,z)$ given by \eqref{id:self-similarvariables}, and assume it is decomposed in the form \eqref{decomp}. Therefore, $\varepsilon$ and $\varepsilon_z$ satisfy \eqref{equation:epsilon} and \eqref{equation:epsilon_z-2}, respectively. We control $\varepsilon$ with the following quantities
\be \label{id:defmathcalErough}
\mathcal E_1^2(s) = \int_0^{z^*}w(z)(\partial_z\varepsilon(s,z))^2\;dz , \quad \ \mathcal E_2(s) =\sup_{z^*\leq z\leq\frac1{\nu(s)}}|\varepsilon| .
\ee
Here $z^*\geq 1$ will be fixed later on, and the weight function is chosen to be
\be \label{id:defw}
w(z)=z^{\alpha} e^{-Kz}
\ee
with $K$ large enough satisfying \eqref{condition:K} and \eqref{condition:K-2}, below, and $\alpha$ being defined by
\begin{equation}
    \alpha = \frac{|1-\beta|-2+\frac\eta2}{\beta+1} =
    \begin{cases}
     \frac{-\beta-1+\frac{\eta}2}{\beta+1} =-1+\frac{\eta}{2(\beta+1)} &\text{when $0<\beta\leq 1$,}\\
     \frac{\beta-3+\frac{\eta}2}{\beta+1}=-1+\frac{4(\beta-1)+\eta}{2(\beta+1)} =1-\frac4{\beta+1}+\frac{\eta}{2(\beta+1)}  &\text{when $\beta > 1$.}
     \end{cases}
\end{equation}
In particular, notice that $-1< \alpha < 1$ is true for any $\beta>0$ and $\eta$ satisfying \eqref{id:defeta-defdelta}.

\begin{definition}[Initial closeness]\label{definition:initial-subcritical}
Let $\lambda_0^*,\tilde \nu_0^*>0$. We say that $a_0$ is initially close to the blow-up profile if there exist $0<\lambda_0\leq \lambda_0^*$ and $\nu_0>0$ such that %the decomposition \eqref{decomp} at $t=0$ is such that
\begin{itemize}
\item[(i)] \emph{The initial values of the modulation parameters satisfy (note that the first equation fixes the value of $s_0$)}
\begin{eqnarray}
\label{bd:parametersini}
\lambda_0 = e^{-s_0}, \ \ \tilde \nu_0= \nu_0 e^{\beta s_0} \leq \tilde \nu_0^*. 
\end{eqnarray} 

\item[(ii)] \emph{The initial perturbation $\varepsilon(s_0,z)=\varepsilon_0\in C([0,\frac1{\nu_0}])\cap C^{1}((0,\frac1{\nu_0}])$, given by the decomposition \eqref{decomp}, satisfies the integral condition \eqref{equation:epsilon-bc}, the boundary condition \eqref{orthogonality}, as well as the two conditions \eqref{equation:modulation-1} and \eqref{equation:modulation-2}}.

\item[(iii)] \emph{For some small number $\gamma > 0$ satisfying conditions \eqref{condition:delta*} and \eqref{condition:delta*onemore} below, $\varepsilon_0$ satisfies}

\begin{eqnarray}\label{bd:eini}
\mathcal E_1^2(s_0) = \int_0^{z^*}w(z)(\partial_z\varepsilon_0(z))^2\;dz  < \gamma e^{-\delta s_0}, \ \ \mathcal E_2^2(s_0) =\sup_{z^*\leq z\leq\frac1{\nu_0}}|\varepsilon_0|^2 < \frac1{16}e^{-\delta s_0}.
\end{eqnarray} 

\end{itemize}

\end{definition}
Our task is to show that solutions that are initially close to the blow-up profile in the sense of Definition \ref{definition:initial-subcritical} will stay close to this blow-up profile up to modulation. The proximity at later times is defined as follows.

\begin{definition}[Trapped solutions] \label{definition:trap-subcritical}

We say that a solution $a(s,z)$ is trapped on $[s_0,s_1]$ with $s_0<s_1<\infty$, if it satisfies the properties of Definition \ref{definition:initial-subcritical} at time $s_0$ and if for some $\tilde K > 1$ and for all $s\in [s_0,s_1]$, $a(s,z)$ can be decomposed as in \eqref{decomp} with:
\begin{itemize}
\item[(i)] \emph{Values of the modulation parameters:}
\begin{eqnarray}\label{bd:parameterstrap}
\frac 1{\tilde K} e^{-s}< \lambda < \tilde K e^{-s}, \ \  \frac{ \tilde \nu_0}{\tilde K} e^{-\beta s}<\nu < \tilde \nu_0 \tilde Ke^{-\beta s}.
\end{eqnarray} 
\item[(ii)] \emph{Decay in time of the remainder in the self-similar variables:}

\begin{eqnarray}\label{bd:etrap}
%\sup_{s_0\leq s\leq s_1}
\mathcal E_1^2(s) = \int_0^{z^*}w\varepsilon_z(s)^2\;dz  < \tilde K^2e^{-\delta s}, \ \ 
%\sup_{s_0\leq s\leq s_1}
\mathcal E_2^2(s) = \sup_{z^*\leq z\leq\frac1{\nu(s)}}|\varepsilon(s)|^2 < \tilde K^2e^{-\delta s}.
\end{eqnarray} 
\end{itemize}

\end{definition}
%The main result of this case is the following result. 

%\begin{theorem}\label{theorem:subcritical}
%\textcolor{red}{need to modify.}Consider a profile $\phi = \phi_\beta$ with $0<\beta\leq 1$, and let $\kappa>0$.  
%There exists $t_0^*$, $\lambda^{*}>0$, $\eta^*>0$ 
%and $\gamma^*>0$ such that if initially at time $t=t_0\geq t_0^*$, we have
%$$
%a(t_0, x)=\frac{1}{\lambda_0} \Big(\phi\left(\frac{x}{\nu_0}\right)+\varepsilon_{0} \left(\frac{x}{\nu_0}\right) \Big)
%$$
%with
%$$
%\lambda_0\leq \lambda^*,\quad \nu_0\leq \nu^*,\quad
%\mbox{and}\quad\left\|\varepsilon_{0}\right\|_{C^{\alpha+\kappa}([0,1])} \leq \gamma^*,\quad (\alpha=\frac1{\beta+1})
%$$
%then there exists $T_\infty$, $\nu^\infty>0$, $\eta, C>0$ such that:
%$$
%a(t, x)=\frac{1}{T_\infty-t} \phi\left(\frac{x}{\nu^\infty( %T_\infty-t)^\beta}\right)+\varepsilon(t, x)
%$$
%and
%$$
%\|\varepsilon\|_{L^{\infty}([0,1])} \leq C(T_\infty-t)^{-1+\eta}.
%$$
%\end{theorem}
The proof of the Theorem \ref{nonsmooth} relies on the following bootstrap proposition.

\begin{proposition} \label{pr:bootstrap}

For any $\beta>0$, $0<\eta<\min (\beta,1)$, and $\delta = 2\min(\beta,1)-\eta$, there exist universal constants $\tilde K,K,z^*\geq 1$ and $\gamma>0$ such that the following holds true. For any $\tilde \nu_0^*>0$, there exists $s_0^*$ large enough such that for all $s_0\geq s_0^*$, any solution of \eqref{1D-model-intro} which is initially close to the blow-up profile in the sense of Definition \ref{definition:initial-subcritical} is trapped on $[s_0,+\infty)$ in the sense of Definition \ref{definition:trap-subcritical}.

\end{proposition}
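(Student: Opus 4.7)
\emph{Bootstrap scheme.} The proof is by a standard continuity argument. Define
$$
s_* = \sup\bigl\{ s \geq s_0 : a \text{ is trapped on } [s_0,s] \text{ in the sense of Definition \ref{definition:trap-subcritical}}\bigr\}.
$$
For $\tilde K > 1$ large and $\gamma>0$ small, the initial bounds \eqref{bd:parametersini}--\eqref{bd:eini} are \emph{strictly} inside the trapping bounds \eqref{bd:parameterstrap}--\eqref{bd:etrap}, so continuity gives $s_*>s_0$. It then suffices to show that on $[s_0,s_*]$ each trapping bound is improved by a universal factor, which rules out $s_*<\infty$. The improvements correspond to the three steps sketched at the end of Section \ref{sec:organization}: modulation estimates, interior weighted energy, and exterior maximum principle.

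\emph{Step 1: modulation.} Using the decay \eqref{asympt z to infty}, Cauchy--Schwarz and the bootstrap on $(\mathcal E_1,\mathcal E_2)$, equations \eqref{equation:modulation-1}--\eqref{equation:modulation-2} yield
$$
\frac{\lambda_s}{\lambda} = -1 + C_\beta \nu + O\bigl(\nu e^{-\delta s/2}\bigr), \qquad \frac{\nu_s}{\nu} = -\beta - (\beta+1)C_\beta \nu + O\bigl(\nu e^{-\delta s/2}\bigr).
$$
Integrating from $s_0$ with data \eqref{bd:parametersini}, and taking $\tilde\nu_0^*$ small and $s_0$ large, strictly improves \eqref{bd:parameterstrap}. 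The byproduct $|\lambda_s/\lambda+1|+|\nu_s/\nu+\beta|=O(\nu)$ controls all forcing terms used in Steps 2 and 3.

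\emph{Step 2: weighted interior energy.} Testing \eqref{equation:epsilon_z-2} against $w(z)\varepsilon_z$ on $[0,z^*]$ and integrating by parts gives
$$
\frac{1}{2}\frac{d}{ds}\mathcal E_1^2 + \int_0^{z^*} \mathcal V(s,z)\, w(z)\varepsilon_z^2\,dz = (\text{nonlocal}) + (\text{nonlinear}) + (\text{boundary}),
$$
where $\mathcal V$ is a potential depending on $\phi$, $\partial_z^{-1}\phi$, $w'/w$ and the modulation parameters. The specific weight exponent $\alpha=(|1-\beta|-2+\eta/2)/(\beta+1)$ and a sufficiently large $K$ in \eqref{id:defw} are tuned so that, using Step 1, the coercivity $\mathcal V(s,z) \geq \delta + \eta/8$ holds pointwise on $[0,z^*]$ --- a spectral-gap mechanism in the spirit of \cite{CGIM18, MRRS}. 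Boundary contributions at $z=0$ vanish thanks to the orthogonality \eqref{orthogonality} combined with $\alpha>-1$; those at $z=z^*$ are absorbed by $\mathcal E_2$. The nonlocal term $\partial_z^{-1}\varepsilon\cdot\phi''$ is handled by a weighted Hardy inequality adapted to $w$, and the quadratic terms by a weighted Sobolev embedding on $[0,z^*]$ giving $\|\varepsilon\|_{L^\infty([0,z^*])}\lesssim \mathcal E_1$. Gronwall on
$$
\frac{d}{ds}\mathcal E_1^2 \leq -\delta\,\mathcal E_1^2 + C\bigl(e^{-\delta s}\mathcal E_2 + e^{-(\delta+\beta)s}\bigr)
$$
strictly improves the $\mathcal E_1$ bound in \eqref{bd:etrap}.

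\emph{Step 3: exterior $L^\infty$ estimate and main obstacle.} On $[z^*,1/\nu]$, recast \eqref{equation:epsilon-1} as $\varepsilon_s + V\varepsilon_z + D\varepsilon = F$ with $V=\partial_z^{-1}\phi+\partial_z^{-1}\varepsilon-(\nu_s/\nu)z$ outgoing at $z=1/\nu$ (since $-\nu_s/\nu=\beta+o(1)$ by Step 1), damping $D=1-2\phi+O(\nu)\geq 1/2$ for $z^*$ large by \eqref{asympt z to infty}, and forcing $F=O(\nu)$ coming from Step 1. Propagating along characteristics, with the boundary value at $z=z^*$ controlled by $\mathcal E_1$ through a trace inequality, the maximum principle yields
$$
\mathcal E_2(s) \leq C e^{-(s-s_0)/2}\mathcal E_2(s_0) + C\int_{s_0}^{s} e^{-(s-\sigma)/2}\bigl(\nu(\sigma)+\mathcal E_1(\sigma)\bigr)\,d\sigma \leq C e^{-\min(\beta,1) s},
$$
which strictly improves the bootstrap bound on $\mathcal E_2$ since $\delta<2\min(\beta,1)$. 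The hard part of the whole argument is the coercivity claim in Step 2: the triple tuning of the weight exponent $\alpha$, the damping constant $K$, and the orthogonality conditions \eqref{orthogonality} must conspire to make $\mathcal V$ uniformly bounded below by $\delta$, and any slack in this tuning propagates through Gronwall and destroys the closure of the bootstrap. Once Step 2 is in hand, Steps 1 and 3 are comparatively soft.
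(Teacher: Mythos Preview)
Your overall architecture --- bootstrap with modulation, interior weighted energy, exterior maximum principle --- matches the paper's, but two steps contain real gaps.

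In Step 2, the boundary contribution at $z=z^*$ produced by integrating by parts the transport term is
\[
\frac12\Bigl(\partial_z^{-1}(\phi+\varepsilon)(z^*)-\tfrac{\nu_s}{\nu}z^*\Bigr)\,w(z^*)\,\varepsilon_z^2(z^*),
\]
which involves $\varepsilon_z^2(z^*)$; the quantity $\mathcal E_2=\sup_{z\geq z^*}|\varepsilon|$ does \emph{not} control this. The paper's argument is instead that the prefactor is nonnegative (the transport field is outgoing at $z^*$ because $-\nu_s/\nu\approx\beta>0$), so the whole term has a favorable sign and is simply dropped. Without this observation your energy inequality does not close, and the $e^{-\delta s}\mathcal E_2$ source you insert in the Gronwall for $\mathcal E_1$ has no justification.

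In Step 3, the forcing is not merely $O(\nu)$: the nonlocal piece $-\partial_z^{-1}\varepsilon\,\phi'$ contributes on $[z^*,1/\nu]$ a term of order $z^{-1/\beta}\|\varepsilon\|_{L^\infty([0,1/\nu])}$, which feeds $\mathcal E_2$ back into its own estimate. The paper handles this by taking $z^*$ large (so the coefficient $z^{*-1/\beta}$ is small) and, to make the final constant independent of the bootstrap constant $\tilde K$, by running an iteration $\tilde K'\mapsto (1+\tilde K')/4$ until $\tilde K'\leq 1$; your single Duhamel pass would produce a constant proportional to $\tilde K$ and fail to improve the bootstrap. Two smaller corrections: the characteristic at $z=1/\nu$ is not outgoing but \emph{stays on} the moving boundary, precisely because of the zero-mean constraint \eqref{equation:epsilon-bc} (this is how the maximum principle propagates the initial sign there); and the proposition must hold for \emph{any} $\tilde\nu_0^*>0$, so ``taking $\tilde\nu_0^*$ small'' in Step 1 is not permitted --- the paper takes $s_0$ large instead.
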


A standard continuity argument implies that for $s_0$ large enough, any solution which is initially close to the blow-up profile in the sense of Definition \ref{definition:initial-subcritical} is trapped in the sense of Definition \ref{definition:trap-subcritical} on some interval $[s_0,s_1]$ with $s_1>s_0$. Letting $s^*>s_0$ be the supremum of times $s_1> s_0$ such that the solution is trapped on $[s_0,s_1]$, the purpose now is to show that $s^*=+\infty$. The strategy is to study the trapped regime via several lemmas and show that the solutions cannot escape from the open set defined by Definition \ref{definition:trap-subcritical}.

Note that the constant  $s_0^*$ (defined in Proposition \ref{pr:bootstrap}) will be adjusted during the proof: we will always be able to conclude the proof of the various lemmas by choosing $s_0^*$ large enough. By time-shift invariance, we can always assume the original initial time to be $t=0$. First, let us derive \textit{a priori} $L^\infty$ and $L^2$ bounds of the remainder of trapped solutions.

\begin{lemma}\label{LinftyL2 bound}
Given a solution $a$ of \eqref{1D-model-intro} that is trapped on a the interval $[s_0,s_1]$ in the sense of Definition \ref{definition:trap-subcritical}, for all $s_0\leq s\leq s_1$, we have for $C^*(\beta,\eta,K,z^*)= C \frac{e^{Kz^*}(z^*)^{-\alpha}}{(1-\alpha)K}$,
\be \label{bd:epsilon2}
%\sup_{s_0\leq s\leq s_1}
\int_0^{\frac1\nu}\varepsilon^2\;dz  < C^*\tilde K^2e^{-\delta s} \frac1\nu
\ee
and 
\be \label{bd:epsilon}
\sup_{0\leq z\leq\frac1{\nu(s)}}|\varepsilon|^2 <  C^*\tilde K^2e^{-\delta s}. %\max\left(1,\frac{e^{Kz^*}(z^*)^{\frac{\frac{\beta}2+1}{\beta+1}}}K\right).
\ee

\end{lemma}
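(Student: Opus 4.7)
The plan is to extract both bounds from the quantities $\mathcal E_1,\mathcal E_2$ controlled in \eqref{bd:etrap}, using the boundary condition $\varepsilon(s,0)=0$ from \eqref{orthogonality} to convert the weighted derivative estimate $\mathcal E_1$ into a pointwise bound on the inner region $[0,z^*]$. The outer region $[z^*,1/\nu(s)]$ is handled directly by $\mathcal E_2$, and the $L^2$ bound follows by splitting the interval and using the length $1/\nu$ as the dominant factor.

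\textit{Step 1: pointwise bound on $[0,z^*]$.} I would use $\varepsilon(s,0)=0$ to write $\varepsilon(s,z)=\int_0^z \partial_{z'}\varepsilon(s,z')\,dz'$ for $0\leq z\leq z^*$ and apply Cauchy--Schwarz with the splitting $w^{1/2}\cdot w^{-1/2}$ to obtain
\begin{equation*}
|\varepsilon(s,z)|^2 \leq \left(\int_0^{z^*} w(z')(\partial_{z'}\varepsilon)^2\,dz'\right)\left(\int_0^{z^*} w(z')^{-1}\,dz'\right) \leq \mathcal E_1^2(s)\, I(z^*,K),
\end{equation*}
where $I(z^*,K)=\int_0^{z^*} z'^{-\alpha} e^{Kz'}\,dz'$ is finite since $-1<\alpha<1$.

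\textit{Step 2: Laplace-type estimate for $I(z^*,K)$.} For $K$ large, the exponential weight $e^{Kz'}$ concentrates the mass near $z'=z^*$. An integration by parts, or equivalently the change of variable $u=Kz'$ combined with the standard asymptotics of the incomplete Gamma function (and handling the boundary behaviour at $0$ separately when $\alpha>0$ via $\int_0^{1} u^{-\alpha} e^u\,du\leq e/(1-\alpha)$), yields
\begin{equation*}
I(z^*,K) \leq \frac{C\, e^{Kz^*} (z^*)^{-\alpha}}{(1-\alpha)K}
\end{equation*}
for some universal constant $C$. Inserting this into Step 1 and using the bootstrap bound $\mathcal E_1^2(s)\leq \tilde K^2 e^{-\delta s}$ gives \eqref{bd:epsilon} on $[0,z^*]$. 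On $[z^*,1/\nu(s)]$, the bound \eqref{bd:epsilon} is immediate from the definition of $\mathcal E_2$ upon enlarging the universal constant so that $C^*\geq 1$.

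\textit{Step 3: the $L^2$ bound.} I would split the integral as
\begin{equation*}
\int_0^{1/\nu}\varepsilon^2\,dz = \int_0^{z^*}\varepsilon^2\,dz + \int_{z^*}^{1/\nu}\varepsilon^2\,dz \leq z^* \sup_{[0,z^*]}|\varepsilon|^2 + \tfrac{1}{\nu}\mathcal E_2^2(s).
\end{equation*}
By Step 1 the first term is bounded by $z^* C^* \tilde K^2 e^{-\delta s}$, and thanks to \eqref{bd:parameterstrap} one has $z^*\nu(s)\leq 1$ for $s\geq s_0^*$ large enough, so this contribution is absorbed into $(1/\nu)C^*\tilde K^2 e^{-\delta s}$. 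The second term is already of the required form, yielding \eqref{bd:epsilon2} after enlarging $C^*$ by a harmless universal factor. The only quantitative ingredient requiring some care is the weighted-integral estimate in Step 2; the remainder of the argument is a routine combination of Cauchy--Schwarz and direct bounds.
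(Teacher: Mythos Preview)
Your proposal is correct and follows essentially the same approach as the paper: Cauchy--Schwarz with the weight $w$ on $[0,z^*]$ combined with the trapped bound on $\mathcal E_2$ for $[z^*,1/\nu]$, and the $L^2$ bound obtained from the $L^\infty$ bound and the interval length. The only minor difference is in the estimate for $\int_0^{z^*} w^{-1}$: the paper establishes the pointwise inequality $\int_0^z w^{-1}\leq \frac{C}{(1-\alpha)K w(z)}$ for all $z>0$ via a rescaling $z\mapsto z/K$ and an elementary case analysis on the sign of $\alpha$ (this general form is reused in later lemmas), whereas you estimate the integral directly at $z=z^*$ by Laplace/incomplete-Gamma considerations; for the $L^2$ bound the paper simply uses $\int_0^{1/\nu}\varepsilon^2\leq \nu^{-1}\sup|\varepsilon|^2$ without splitting.
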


\begin{proof}

\textbf{Step 1.} \emph{A preliminary estimate}. We claim that there exists $C>0$ such that for all $\alpha \in (-1,1)$ and $K\geq 1$, for $w$ given by \eqref{id:defw}:
\be \label{bd:wintegrated}
\int_0^z \frac{1}{w(\tilde z)}d\tilde z\leq \frac{C}{(1-\alpha)Kw(z)} \qquad \forall z>0.
\ee
Indeed, recalling $w(z)=z^{\alpha}e^{-Kz}$, by a rescaling argument and then a change of variables:
\be \label{bd:winterdecomp}
\sup_{z>0} w(z)  \int_0^z \frac{1}{w(\tilde z)}d\tilde z =\sup_{z>0} w(\frac zK)  \int_0^{\frac{z}{K}} \frac{1}{w(\tilde z)}d\tilde z=\frac{1}{K} \sup_{z>0} z^{\alpha}e^{-z} \int_0^{z} \tilde z^{-\alpha}e^{\tilde z}d\tilde z.
\ee
If $\alpha\leq 0$ then for all $z>0$:
\be \label{bd:winter1}
z^{\alpha}e^{-z} \int_0^{z} \tilde z^{-\alpha}e^{\tilde z}d\tilde z \leq e^{-z} \int_0^{z} e^{\tilde z}d\tilde z \leq 1.
\ee
If  $\alpha > 0$ then an integration by parts gives
\begin{equation*}
    z^{\alpha}e^{-z} \int_0^{z} \tilde z^{-\alpha}e^{\tilde z}d\tilde z =\frac z{1-\alpha}-\frac{z^\alpha e^{-z}}{1-\alpha}\int_0^z \tilde z^{1-\alpha}e^{\tilde z}\;d\tilde z
\end{equation*} %\label{bd:winter2}
%\leq z^{\alpha} \int_0^{z} \tilde z^{-\alpha}d\tilde z \leq \frac{2}{1-\alpha},
Now, since $z \geq {\tilde z}$, we obtain
\be \label{bd:winter3}
z^{\alpha}e^{-z} \int_0^{z} \tilde z^{-\alpha}e^{\tilde z}d\tilde z \leq 
\frac z{1-\alpha} - \frac{e^{-z}}{1-\alpha} \int_0^z \tilde z e^{\tilde z} d\tilde z =
\frac{e^{-z}}{1-\alpha}(e^z-1)\leq\frac1{1-\alpha}.
\ee
Injecting \eqref{bd:winter1} and \eqref{bd:winter3} in \eqref{bd:winterdecomp} shows \eqref{bd:wintegrated}.\\

\noindent \textbf{Step 2.} \emph{Proof of the Lemma}. For all $z\in [0,z^*]$, using the boundary condition $\varepsilon(s,z=0)=0$, Cauchy-Schwarz inequality, and \eqref{bd:wintegrated}, from \eqref{bd:etrap} one obtains that
\begin{align} \label{bd:epsilon:interiorzone}
|\varepsilon(s,z)|^2 &\leq (\int_0^z w\varepsilon_z^2 d\tilde z) (\int_0^z \frac1w d\tilde z)  \leq  \mathcal E_1^2(s)  (\int_0^{z^*} \frac1w d\tilde z)\leq \frac{C}{(1-\alpha)Kw(z^*)} \mathcal E_1^2 \leq C^* \mathcal E_1^2\\
\nonumber  &\leq C^*\tilde K^2 e^{-\delta s}.
\end{align}
This, together with \eqref{bd:etrap}, implies \eqref{bd:epsilon}. Estimate \eqref{bd:epsilon2} is an immediate consequence of \eqref{bd:epsilon} as
$$
\int_0^{\frac1\nu}\varepsilon^2\;dz \leq \frac1{\nu}\sup_{0\leq z\leq z^*}|\varepsilon|^2 \leq C^*\tilde K^2e^{-\delta s}\frac1\nu.
$$
\end{proof}

We will need the following technical estimate in this section.
\begin{lemma}\label{lemma:weight-estimate}
  Suppose that $\beta > 0$, $K\geq 2$ and $z^* \geq 1$. Denote by 
  \begin{equation*}
      A:=\int_0^{z^*} \phi'(z)^2 w(z) ( \int_0^{z} \frac{1}{w}(\widetilde{z}) d\widetilde{z}) dz  
      +  \int_0^{z^*} \phi''(z)^2 w(z) \Big(\int_0^z (\int_0^{\widetilde{z}} \frac{1}{w}(\xi)d\xi )^{\frac{1}{2}} d\widetilde{z} \Big)^2 dz:=A_1+A_2.
  \end{equation*}
For any $\alpha \in (-1,1)$, if $w$ is given by \eqref{id:defw} then
  \begin{equation} \label{bd:nonlocalintermediateestimate}
      A \leq \frac{C(\beta)}{1-\alpha} \begin{cases}
     K^{-1} &\text{when $0<\beta<1$;}\\
     K^{-1} \ln K &\text{when $\beta =1$;}\\
     K^{-\frac2{\beta+1}}&\text{when $\beta > 1$.}
      \end{cases}
  \end{equation}
    
\end{lemma}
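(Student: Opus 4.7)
The plan is to bound $A_1$ and $A_2$ separately, using the near-zero asymptotics $\phi'(z)^2 \sim \tfrac{1}{(\beta+1)^2} z^{-2\beta/(\beta+1)}$ and $\phi''(z)^2 \sim \tfrac{\beta^2}{(\beta+1)^4} z^{-(4\beta+2)/(\beta+1)}$ obtained by differentiating \eqref{asympt z to 0}, together with the boundedness of $\phi'$ and $\phi''$ on $[c,\infty)$ for any $c>0$, and a sharpened version of the weight estimate \eqref{bd:wintegrated}. The key preliminary step will be to refine \eqref{bd:wintegrated}: by tracking the behavior $f(u) := u^\alpha e^{-u}\int_0^u \tilde u^{-\alpha}e^{\tilde u}\,d\tilde u \sim u/(1-\alpha)$ as $u\to 0^+$ (in addition to the $u\to\infty$ limit already used in Step 1 of Lemma \ref{LinftyL2 bound}), the very same rescaling argument will yield
$$w(z)\int_0^z \frac{1}{w(\tilde z)}\,d\tilde z \;\leq\; \frac{C\min(Kz,1)}{(1-\alpha)K},\qquad z>0.$$
This min-type bound is the engine of both estimates that follow.

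For $A_1$, I will split $[0,z^*] = [0,1/K]\cup[1/K,z^*]$, legal since $K\geq 2$ and $z^*\geq 1$. On $[0,1/K]$ the refined estimate contributes a factor $z/(1-\alpha)$ that tames the singularity of $(\phi')^2$: since $z\,\phi'(z)^2 \sim z^{(1-\beta)/(\beta+1)}$ is integrable at $0$ for all $\beta>0$, this piece will be $\lesssim K^{-2/(\beta+1)}/(1-\alpha)$. On $[1/K,z^*]$ the refined bound reduces to $C/[(1-\alpha)K]$, leaving $\tfrac{1}{(1-\alpha)K}\int_{1/K}^{z^*}\phi'(z)^2\,dz$: the last integral is $O(1)$ for $\beta<1$ (since $\phi'\in L^2$ by the remark following Proposition \ref{proposition:profile}), $O(\ln K)$ for $\beta=1$ (from the logarithmic divergence $\phi'(z)^2\sim C/z$), and $O(K^{(\beta-1)/(\beta+1)})$ for $\beta>1$ (the non-integrable singularity at $0$ dominates). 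Summing the two contributions reproduces exactly the three announced rates.

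For $A_2$, I will insert the refined estimate into the inner square root, reducing the problem to integrals of $\sqrt{\min(\tilde z,1/K)/w(\tilde z)}$. Computing these in the two regimes $z\leq 1/K$ and $z>1/K$—the latter by an integration-by-parts argument applied to $\int_1^{Kz}u^{-\alpha/2}e^{u/2}\,du$ exploiting $\alpha<1$—yields a pointwise bound on the inner double integral of the schematic form $\lesssim z^{(3-\alpha)/2}/\sqrt{1-\alpha}$ for $z\leq 1/K$ and $\lesssim [(1-\alpha)^{3/2}K^{3/2}\sqrt{w(z)}]^{-1}$ plus a lower-order piece for $z\geq 1/K$. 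Squaring, multiplying by $w(z)\phi''(z)^2$, and splitting the outer integral again at $1/K$ will leave, on the first piece, an integrand $\lesssim z^3\phi''(z)^2\sim z^{(1-\beta)/(\beta+1)}$ giving $\lesssim K^{-2/(\beta+1)}$, and on the second piece a dominant term $\tfrac{C}{(1-\alpha)^3K^3}\int_{1/K}^{z^*}\phi''(z)^2\,dz \lesssim K^{-3}\cdot K^{(3\beta+1)/(\beta+1)} = K^{-2/(\beta+1)}$. In each sub-case, $A_2$ will be controlled by $K^{-2/(\beta+1)}$ and hence dominated by the rate already obtained for $A_1$.

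The hard part will be the sharpness of the min-type refinement of \eqref{bd:wintegrated}: without the $\min(Kz,1)$ factor, both $\int_0^{z^*}(\phi')^2\,dz$ and $\int_0^{z^*}z^3(\phi'')^2\,dz$ diverge when $\beta\geq 1$, so the exponential part of $w$ must be used precisely on the transition scale $z\sim 1/K$ to extract the correct power of $K$; it is exactly this transition that forces the three sub-cases and produces the logarithm at $\beta=1$. The extra $(1-\alpha)^{-1}$ factors accumulated for $A_2$ can be absorbed into $C(\beta)$ because the specific $\alpha$ used throughout the paper satisfies $1-\alpha \geq c(\beta)>0$ by \eqref{id:defeta-defdelta}.
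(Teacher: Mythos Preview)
Your treatment of $A_1$ is essentially the paper's proof repackaged: the paper splits $[0,z^*]$ into $[0,1/K]\cup[1/K,1]\cup[1,z^*]$ and uses $w(z)\int_0^z w^{-1}\lesssim z/(1-\alpha)$ on the first piece and \eqref{bd:wintegrated} on the rest, which is exactly your min-type refinement stated explicitly by cases. One imprecision: invoking mere ``boundedness of $\phi',\phi''$ on $[c,\infty)$'' would leave a $z^*$-dependent bound; you actually need the decay $\phi'(z)\sim z^{-1-1/\beta}$ from \eqref{asympt z to infty} so that $\int_1^{z^*}(\phi')^2\leq\int_1^\infty(\phi')^2=C(\beta)$, and similarly for $\phi''$.

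For $A_2$ the paper takes a much shorter route. Instead of estimating the inner double integral directly, it applies Cauchy--Schwarz once:
\[
w(z)\Big(\int_0^z\Big(\int_0^{\tilde z}\tfrac{1}{w}\Big)^{1/2}d\tilde z\Big)^2\;\leq\; w(z)\cdot z\int_0^z\int_0^{\tilde z}\tfrac{1}{w}\,d\xi\,d\tilde z\;\leq\;z^2\,w(z)\int_0^z\tfrac{1}{w},
\]
using monotonicity of $\tilde z\mapsto\int_0^{\tilde z}w^{-1}$. Since $z\phi''$ has the same asymptotics as $\phi'$ both near $0$ and near $\infty$, the integral $\int_0^{z^*}(z\phi'')^2\,w\int_0^z w^{-1}$ is then bounded by the very computation already done for $A_1$, with the same $(1-\alpha)^{-1}$ prefactor. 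Your direct approach works, but it is considerably longer and, as you note, accumulates $(1-\alpha)^{-3}$ rather than the $(1-\alpha)^{-1}$ stated in the lemma; absorbing the extra factors into $C(\beta)$ via the specific choice of $\alpha$ in \eqref{id:defeta-defdelta} is fine for the application but does not give the lemma as written for general $\alpha\in(-1,1)$.
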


\begin{proof}
We recall that for $i=1,2$:
\be \label{id:interexprphi}
|\pa_z^i \phi (z) |\lesssim z^{-\frac{\beta}{\beta+1}+1-i} \mbox{ for }z\leq1, \quad |\pa_z^i \phi(z)| \lesssim z^{-\frac{\beta+1}{\beta}+1-i} \mbox{ for }z\geq1.
\ee
We first consider $A_1$ and decompose:
   $$
   A_1=A^1_1+A^2_1+A_1^3, \quad  A_1=\int_0^{\frac 1K} \phi'(z)^2 w(z) ( \int_0^{z} \frac{1}{w}(\widetilde{z}) d\widetilde{z}) dz, \quad A_1^2=\int_{\frac{1}{K}}^{1}..., \quad A_1^3=\int_{1}^{z^*}... \; .
   $$
 For $0<z\leq K^{-1}$ we have $w(z)\approx z^{\alpha}$ so that using \eqref{id:interexprphi}:
 $$
 A_1^1 \lesssim \int_0^{K^{-1}} z^{-\frac{2\beta}{\beta+1}}\frac{z}{1-\alpha} dz \lesssim C(\beta)\frac{K^{\frac{-2}{\beta+1}}}{1-\alpha}.
 $$
 For $K^{-1}\leq z \leq 1$ we use \eqref{bd:wintegrated} and \eqref{id:interexprphi} so that: 
\be \label{nonlocal:bd:inter2}
 A_1^2\lesssim \int_{K^{-1}}^1 z^{-\frac{2\beta}{\beta+1}} \frac{1}{K(1-\alpha)} dz \lesssim \frac{C(\beta)}{1-\alpha} \left\{ \begin{array}{l l} K^{-1} \qquad \mbox{if } \beta <1, \\ K^{-1}\log K \qquad \mbox{if } \beta =1, \\ K^{\frac{-2}{\beta+1}}  \qquad  \mbox{if } \beta >1. \end{array} \right.
 \ee
  For $z \geq 1$ we use \eqref{bd:wintegrated} and \eqref{id:interexprphi} so that: 
\be \label{nonlocal:bd:inter3}
 A_1^3 \lesssim \int_{1}^{z^*} z^{-\frac{2(\beta+1)}{\beta}} \frac{1}{K} dz \lesssim \frac{C(\beta)}{1-\alpha} K^{-1}.
\ee
 Summing the three inequalities above shows:
\be \label{nonlocal:bd:inter}
 A_1 \lesssim \frac{C(\beta)}{1-\alpha} \left\{ \begin{array}{l l} K^{-1} \qquad \mbox{if } \beta <1, \\ K^{-1}\log K \qquad \mbox{if } \beta =1, \\ K^{\frac{-2}{\beta+1}}  \qquad  \mbox{if } \beta >1. \end{array} \right.
 \ee
 We turn to $A_2$. For $z>0$ we use the Cauchy inequality and that $\tilde z \mapsto \int_0^{\widetilde{z}} \frac{1}{w}(\xi)d\xi $ is increasing to get
\begin{equation*}
\begin{split}
    & w(z)\Big(\int_0^z (\int_0^{\widetilde{z}} \frac{1}{w}(\xi)d\xi )^{\frac{1}{2}} d\widetilde{z} \Big)^2 \leq w(z) \Big(\int_0^z 1 dz\Big) \Big(\int_0^z \int_0^{\widetilde{z}} \frac{1}{w}(\xi)d\xi  d\widetilde{z} \Big)
    \\
    \leq & w(z) z (\int_0^z \int_0^{z} \frac{1}{w}(\xi)d\xi  d\widetilde{z} )  \leq  z^2 w(z) \int_0^z \frac{1}{w(\tilde z)} d \tilde z.
\end{split}
\end{equation*}
Notice using \eqref{id:interexprphi} that $z\phi''$ has the same asymptotic behavior as $\phi'$ near $z=0$ and $z=\infty.$ Therefore, by repeating a similar calculations, one obtains the same estimate \eqref{nonlocal:bd:inter} for the term $A_2$ than for the $A_1$ term. These two estimates show \eqref{bd:nonlocalintermediateestimate}.

\end{proof}

In the sequel, we reintegrate over time the modulation equations and the various energy and pointwise  estimates, to show that the various upper bounds describing the bootstrap cannot be saturated. Proposition \ref{pr:bootstrap} follows immediately from the following three lemmas.

\begin{lemma}[Modulation Equations] \label{lemma:modulation}
For any choice of constants $\tilde K,K,z^*\geq 1$, $ \gamma,\tilde\nu_0^*>0$ and $0<\eta<\min (\beta,1)$, there exists a large self-similar time $s_0^*$ such that for any $s_0\geq s_0^*$, for any solution which is trapped on $[s_0,s_1]$, we have for $s\in [s_0,s_1]$:
\begin{eqnarray}\label{modulationequations}
\left| \frac{\lambda_s}{\lambda}+1\right|\leq C e^{-\frac{\delta}{2}s}, \qquad \left| \frac{\nu_s}{\nu}+\beta \right|\leq C e^{-\frac{\delta}{2}s},
\end{eqnarray}
for $C>0$ independent of the bootstrap constants, and
\begin{eqnarray}\label{bd:boostrap improved parameters}
 \frac 12 e^{-s}\leq \lambda \leq \frac 32 e^{-s}, \qquad \frac{\tilde \nu_0}2 e^{-\beta s}\leq \nu \leq \frac 32 \tilde \nu_0 e^{-\beta s},
\end{eqnarray}
Moreover, if $s_1=\infty$ then there exists some constants $\tilde \lambda_\infty,\tilde \nu_\infty>0$ such that
\begin{eqnarray}\label{bd:boostrap improved parameters2}
\lambda=\tilde \lambda_\infty \big(1+ O(e^{-\frac\delta2 s}) \big)  e^{-s} ,\quad
\nu=\tilde \nu_\infty \big(1+ O(e^{-\frac\delta2 s}) \big) e^{-\beta s }.
\end{eqnarray}
\end{lemma}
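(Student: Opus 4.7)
The plan is to exploit the exact modulation identities \eqref{equation:modulation-1} and \eqref{equation:modulation-2}, which together reduce both estimates in \eqref{modulationequations} to controlling the single scalar
\[
I(s) := 2\nu(s)\int_0^{1/\nu(s)}(\phi+\varepsilon)^2(s,z)\,dz = \frac{\lambda_s}{\lambda}+1 = -\frac{1}{\beta+1}\Big(\frac{\nu_s}{\nu}+\beta\Big).
\]
Thus the first task is to show $|I(s)|\leq C e^{-\delta s/2}$ under the bootstrap assumptions of Definition \ref{definition:trap-subcritical}.

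Expanding $(\phi+\varepsilon)^2=\phi^2+2\phi\varepsilon+\varepsilon^2$, I would estimate the three contributions separately. For the main piece, the asymptotic $\phi_\beta(z)=d_\beta z^{-1/\beta}+o(z^{-1/\beta})$ from Proposition \ref{proposition:profile} yields, by a direct case analysis on $\beta$,
\[
\nu\int_0^{1/\nu}\phi^2\,dz \lesssim \begin{cases}\nu & \text{if } 0<\beta<2,\\ \nu\log(1/\nu) & \text{if } \beta=2,\\ \nu^{2/\beta} & \text{if }\beta>2,\end{cases}
\]
which combined with $\nu\leq \tilde K\tilde\nu_0 e^{-\beta s}$ from \eqref{bd:parameterstrap} is in every case bounded by $C e^{-\delta s/2}$ once $s_0$ is large, since $\delta/2=\min(\beta,1)-\eta/2$ lies strictly below both $\beta$ and $2\min(1,1/\beta)$. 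The quadratic term is controlled directly by Lemma \ref{LinftyL2 bound}: $2\nu\int_0^{1/\nu}\varepsilon^2\,dz \leq 2C^*\tilde K^2 e^{-\delta s}$. The cross term is treated with Cauchy--Schwarz and the same $L^2$ bound on $\varepsilon$, giving $\nu\big|\int\phi\varepsilon\big|\leq \sqrt{\nu}\,\|\phi\|_{L^2(0,1/\nu)}\cdot\sqrt{C^*}\,\tilde K\, e^{-\delta s/2}$; the prefactor $\sqrt{\nu}\,\|\phi\|_{L^2(0,1/\nu)}$ decays in $s$ by a case analysis analogous to the $\phi^2$ estimate, so this term is also absorbed into $Ce^{-\delta s/2}$.

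To pass from \eqref{modulationequations} to the improved parameter bounds \eqref{bd:boostrap improved parameters}, I would integrate in $s$. Setting $f(s):=\log(\lambda(s)\, e^s)$, the initial normalization $\lambda_0=e^{-s_0}$ from \eqref{bd:parametersini} gives $f(s_0)=0$, while $f'(s)=I(s)$. Hence
\[
|f(s)|\leq \int_{s_0}^s |I(s')|\,ds' \leq C\int_{s_0}^\infty e^{-\delta s'/2}\,ds' \leq C' e^{-\delta s_0/2},
\]
and choosing $s_0^*$ so that $C' e^{-\delta s_0^*/2}\leq \log(3/2)$ yields $\tfrac12 e^{-s}\leq\lambda\leq\tfrac32 e^{-s}$. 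The identical argument applied to $g(s):=\log(\nu(s)\, e^{\beta s}/\tilde\nu_0)$, with $g(s_0)=0$ and $g'(s)=-(\beta+1)I(s)$, gives the corresponding bounds on $\nu$; note that for $\tilde K>3/2$ these strictly improve the bootstrap hypothesis \eqref{bd:parameterstrap}.

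For the asymptotic \eqref{bd:boostrap improved parameters2} when $s_1=\infty$, the summability of $|I(s')|$ at infinity combined with the Cauchy criterion gives the existence of finite limits $\log\tilde\lambda_\infty := \lim_{s\to\infty} f(s)$ and $\log(\tilde\nu_\infty/\tilde\nu_0):=\lim_{s\to\infty} g(s)$, with $|f(s)-\log\tilde\lambda_\infty|\leq \int_s^\infty |I(s')|\,ds'\leq Ce^{-\delta s/2}$ and analogously for $g$; exponentiating produces \eqref{bd:boostrap improved parameters2}. The only mildly delicate step is the case $\beta\geq 2$ in the $\int\phi^2$ estimate, where $\phi\notin L^2([0,\infty))$; however, the explicit power-law decay of $\phi$ at infinity combined with the bootstrap decay of $\nu$ always yields a margin strictly better than $e^{-\delta s/2}$, so the argument closes uniformly in $\beta>0$.
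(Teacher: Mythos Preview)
Your argument is correct and follows essentially the same route as the paper: both reduce \eqref{modulationequations} to bounding the scalar $I(s)=2\nu\int_0^{1/\nu}(\phi+\varepsilon)^2$, estimate the $\phi^2$ contribution via the tail asymptotics of Proposition~\ref{proposition:profile} and the $\varepsilon$ contribution via Lemma~\ref{LinftyL2 bound}, and then integrate the resulting differential inequality. The only cosmetic differences are that the paper uses $(\phi+\varepsilon)^2\leq 2\phi^2+2\varepsilon^2$ (avoiding your separate Cauchy--Schwarz treatment of the cross term) and integrates $\frac{d}{ds}(e^s\lambda)$ rather than $\log(\lambda e^s)$; it also records the slightly sharper bound $I(s)\leq e^{-(\delta/2+\eta/4)s}$, whose extra $\eta/4$ margin is not needed here but is reused later in the proof of Lemma~\ref{lemma:interior}.
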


\begin{proof}

\textbf{Step 1.} \emph{A preliminary estimate}. We claim that for $s_0$ large enough for all $s\in [s_0,s_1]$:
\be \label{bd:source}
\nu \int_0^{\frac1\nu} (\phi+\varepsilon)^2dz\leq e^{-(\frac{\delta}{2}+\frac{\eta}{4})s}.
\ee
To prove it, first we use the decay \eqref{asympt z to infty} of $\phi(z)$ as $z\to\infty$, and then inject \eqref{bd:parameterstrap} to obtain 
\begin{align} \nonumber
& \nu \int_0^{\frac{1}{\nu}} \phi^2 (z) dz\lesssim C(\beta) \left\{ \begin{array}{l l} \nu \qquad \quad \mbox{if }\beta<2, \\ \nu |\log \nu| \qquad \mbox{if }\beta=2, \\ \nu^{\frac{2}{\beta}} \qquad \quad \mbox{if }\beta>2, \end{array} \right. \leq C(\beta,\tilde \nu_0,\tilde K)  \left\{ \begin{array}{l l} e^{-\beta s} \qquad \mbox{if }\beta<2, \\s e^{- \beta s} \qquad \mbox{if }\beta=2, \\ e^{-2s} \qquad \mbox{if }\beta>2 ,\end{array} \right. \\
& \label{modulation:bdinter1} \leq C(\beta,\tilde \nu_0,\tilde K)  s e^{-( \frac \delta 2+\frac{\eta}{2}) s} \ \leq \ \frac14e^{-( \frac \delta 2+\frac{\eta}{4}) s}, 
\end{align}
where we used $\frac\delta2 + \frac\eta2= \min(\beta,1)$ for the third inequality, and then took $s_0$ large enough for the last inequality. Second, using \eqref{bd:epsilon2} we obtains
\be \label{modulation:bdinter2}
 \nu \int_0^{\frac1\nu}\varepsilon^2\;dz  <  C^*\tilde K^2e^{-\delta s} \leq \frac14e^{-(\frac \delta 2+\frac{\eta}{4}) s},
\ee
where we used $\eta<\delta$ and took $s_0$ large enough for the last inequalities. Combining \eqref{modulation:bdinter1}, \eqref{modulation:bdinter2} and the inequality $(x+y)^2\leq 2(x^2+y^2)$ shows the estimate \eqref{bd:source} we claimed.\\

\noindent \textbf{Step 2.} \emph{Computing $\lambda$}. We rewrite \eqref{equation:modulation-1} as:
\be \label{modulation:id:inter1}
\lambda_s+\lambda = 2\lambda \nu \int_0^{\frac{1}{\nu}} (\phi + \varepsilon)^2 (z) dz.
\ee
Injecting \eqref{bd:source} in \eqref{modulation:id:inter1} shows the first inequality in \eqref{modulationequations}. Using \eqref{bd:parameterstrap} and \eqref{bd:source} gives $\lambda \nu \int_0^{\frac{1}{\nu}} (\phi + \varepsilon)^2 \leq \tilde K e^{-(1+\frac{\delta}{2}+\frac \eta 4)s}\leq  e^{-(1+\frac{\delta}{2})s}$ for $s_0$ large enough, implying:
\be \label{modulation:id:inter2}
\frac{d}{ds}(e^s\lambda)=O( e^{-\frac{\delta}{2}s}) \qquad \mbox{hence} \qquad  \lambda(s)=\big(e^{s_0}\lambda_0 + \int_{s_0}^s O( e^{-\frac\delta2 \tilde s}) d\tilde s\big) e^{-s} .
\ee
Choosing $s_0$ large enough so that $-\frac12\leq\int_{s_0}^s O( e^{-\frac\delta2 \tilde s}) d\tilde s\leq\frac12$ and injecting \eqref{bd:parametersini} in \eqref{modulation:id:inter2} shows
$$
\frac12e^{-s}\leq\lambda(s)\leq \frac32e^{-s}.
$$
In case $s_1=\infty$, injecting \eqref{bd:parametersini} in \eqref{modulation:id:inter2} and rewriting this identity gives:
 \begin{equation*}
     \begin{split}
         \lambda(s)&=\big(1+ \int_{s_0}^\infty O( e^{-\frac\delta2 \tilde s}) d\tilde s  - \int_{s}^\infty O( e^{-\frac\delta2 \tilde s}) d\tilde s\big) e^{-s}
         \\
         &:= \big(\tilde \lambda_\infty - \int_{s}^\infty O( e^{-\frac\delta2 \tilde s}) d\tilde s\big) e^{-s} =\tilde \lambda_\infty \big(1+ O(e^{-\frac\delta2 s}) \big)  e^{-s}.
     \end{split}
 \end{equation*}
The two inequalities above show \eqref{bd:boostrap improved parameters} and \eqref{bd:boostrap improved parameters2} for $\lambda$.\\

\noindent \textbf{Step 3.} \emph{Computing $\nu$}. We rewrite \eqref{equation:modulation-2} as:
 \begin{equation*}
  \nu_s + \beta \nu= - (\beta+1) 2\nu^2 \int_0^{\frac{1}{\nu}} (\phi + \varepsilon)^2 (z) dz .
\end{equation*}
Reasoning exactly as in Step 2, thanks to \eqref{bd:source} and \eqref{bd:parameterstrap}  one obtains the second inequality in \eqref{modulationequations} and:
\begin{equation*}
    \frac{d}{ds}(e^{\beta s}\nu) = O(e^{-\frac\delta2 s}), \quad \mbox{hence} \quad  \nu(s)=\big(e^{\beta s_0}\nu_0 + \int_{s_0}^s O( e^{-\frac\delta2 \tilde s}) d\tilde s\big) e^{-\beta s} .
\end{equation*}
Choosing $s_0$ large enough so that $-\frac12\leq \tilde \nu_0^{-1}\int_{s_0}^s O( e^{-\frac\delta2 \tilde s}) d\tilde s\leq\frac12$ and using \eqref{bd:parametersini} we get:
$$
\frac{\tilde \nu_0}2e^{-\beta s}\leq\nu(s)\leq \frac32 \tilde \nu_0 e^{-\beta s}.
$$
If $s_1=\infty$, thanks to \eqref{bd:parametersini}, one can rewrite
\begin{eqnarray*}
 \nu(s)=\big(\tilde \nu_0 + \int_{s_0}^\infty O( e^{-\frac\delta2 \tilde s}) d\tilde s-  \int_{s}^\infty O( e^{-\frac\delta2 \tilde s}) d\tilde s\big) e^{-\beta s} := \tilde \nu_\infty \big(1+ O(e^{-\frac\delta2 s}) \big) e^{-\beta s},   
\end{eqnarray*}
The two inequalities above show \eqref{bd:boostrap improved parameters} and \eqref{bd:boostrap improved parameters2} for $\nu$.

\end{proof}

\begin{lemma}[Interior Estimate]\label{lemma:interior}

For any $0<\eta<\min (\beta,1)$ and $z^*\geq 1$, there exist $K^*\geq 1$ such that for all $K\geq K^*$ the following holds true. For any constants $\tilde K\geq 1$ and $ \gamma,\tilde\nu_0^*>0$, there exists a large self-similar time $s_0^*$ such that for any $s_0\geq s_0^*$, for any solution which is trapped on $[s_0,s_1]$, we have for $s\in [s_0,s_1]$:
\begin{eqnarray}\label{bd:int e1}
\mathcal E_1^2(s)  \leq 2\gamma e^{-\delta s}.
\end{eqnarray}

\end{lemma}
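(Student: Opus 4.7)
The plan is to derive, via a weighted $L^2$ energy estimate on $\varepsilon_z$, a differential inequality of the form
\[\frac{d}{ds}\mathcal E_1^2 \;\leq\; -(\delta+\eta')\,\mathcal E_1^2 + R(s),\qquad R(s)=O(e^{-(\delta+\eta')s}),\]
for some $\eta'>0$, and to close the bootstrap by time-integration together with the initial bound \eqref{bd:eini}. First I would multiply the $\varepsilon_z$-equation \eqref{equation:epsilon_z-2} by $2w(z)\varepsilon_z$ and integrate over $[0,z^*]$. The key manipulation is integration by parts of the transport term $-2\int w\varepsilon_z(\partial_z^{-1}\phi-\tfrac{\nu_s}{\nu}z)\varepsilon_{zz}\,dz$, which generates a damping coefficient involving $w'/w = \alpha/z - K$. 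The boundary term at $z=0$ vanishes because the effective exponent of $z$ in $w(\partial_z^{-1}\phi-\tfrac{\nu_s}{\nu}z)\varepsilon_z^2$ is strictly positive (this is where the precise value of $\alpha$ combines with the orthogonality \eqref{orthogonality}), while the one at $z^*$ has favorable sign thanks to $-\tfrac{\nu_s}{\nu}\approx\beta>0$ from Lemma \ref{lemma:modulation} and can be discarded.

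Combining with the $2(\tfrac{\lambda_s}{\lambda}+\tfrac{\nu_s}{\nu}+\phi)w\varepsilon_z^2$ prefactor from the equation and substituting the modulation laws $\tfrac{\lambda_s}{\lambda}=-1+O(e^{-\delta s/2})$, $\tfrac{\nu_s}{\nu}=-\beta+O(e^{-\delta s/2})$ from Lemma \ref{lemma:modulation}, the coefficient of $w\varepsilon_z^2$ reduces at leading order to
\[\mathcal L(z)=3\phi(z)-2-\beta+\Bigl(\tfrac{\alpha}{z}-K\Bigr)\bigl(\partial_z^{-1}\phi(z)+\beta z\bigr).\]
Using the expansion \eqref{asympt z to 0}, $\phi(0)=1$ and $\partial_z^{-1}\phi(z)/z\to 1$ as $z\downarrow0$, hence $\mathcal L(0)=1-\beta+\alpha(\beta+1)$; the two-branch definition of $\alpha$ right before \eqref{id:defw} was tuned precisely so that $\mathcal L(0)=-(\delta+\eta/2)$ both for $0<\beta\leq 1$ (where $\delta=2\beta-\eta$) and for $\beta>1$ (where $\delta=2-\eta$). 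For $z>0$ the $-K(\partial_z^{-1}\phi+\beta z)$ contribution dominates once $K\geq K^*$ is large enough, yielding $\mathcal L(z)\leq-(\delta+\eta/4)$ uniformly on $[0,z^*]$. This spectral-gap coercivity is the core of the argument.

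It remains to absorb the lower-order contributions. The linear nonlocal interactions $-\phi'\varepsilon+\partial_z^{-1}\varepsilon\,\phi''$ paired with $w\varepsilon_z$ are handled by Cauchy--Schwarz using the pointwise bound \eqref{bd:epsilon:interiorzone} for $\varepsilon$ and its analogue for $\partial_z^{-1}\varepsilon$; the weighted quantity $A$ of Lemma \ref{lemma:weight-estimate} provides the required $K^{-\mu/2}$ smallness through \eqref{bd:nonlocalintermediateestimate}, making these terms subordinate to the coercivity. The pressure-driven source $-(\beta\phi'+(\beta+1)z\phi'')\cdot 2\nu\int_0^{1/\nu}(\phi+\varepsilon)^2$ paired with $w\varepsilon_z$ is bounded, using \eqref{bd:source}, by $C\mathcal E_1 A^{1/2}e^{-(\delta/2+\eta/4)s}$, contributing after Young's inequality a source of order $e^{-(\delta+\eta/4)s}$. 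The quadratic terms $-\varepsilon\varepsilon_z$ and $\partial_z^{-1}\varepsilon\,\varepsilon_{zz}$ (the latter after one further integration by parts) are bounded by $\|\varepsilon\|_{L^\infty([0,z^*])}\mathcal E_1^2\lesssim e^{-\delta s/2}\mathcal E_1^2$ using Lemma \ref{LinftyL2 bound}, again absorbable for $s_0$ large.

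Collecting, one obtains $\frac{d}{ds}\mathcal E_1^2\leq-(\delta+\eta/8)\mathcal E_1^2+C(\beta,\tilde K)\,e^{-(\delta+\eta/8)s}$. Time-integration combined with $\mathcal E_1^2(s_0)<\gamma e^{-\delta s_0}$ from \eqref{bd:eini} then gives $\mathcal E_1^2(s)\leq\gamma e^{-\delta s}+C(\beta,\tilde K)\,e^{-\delta s}e^{-\eta(s-s_0)/8}\leq 2\gamma e^{-\delta s}$ once $s_0^*$ is chosen large enough, which is the improvement \eqref{bd:int e1} strictly inside the bootstrap bound \eqref{bd:etrap}. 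The main obstacle is the pointwise coercivity analysis: the sharp identity $\mathcal L(0)=-(\delta+\eta/2)$ relies on the branched definition of $\alpha$, and $K$ must be chosen large enough to render the $A^{1/2}$-weighted nonlocal perturbations strictly subordinate to the coercive margin, even in the transitional region $z\sim K^{-1}$ where $\alpha/z$ and $-K$ are comparable.
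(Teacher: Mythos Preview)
Your overall strategy matches the paper's: multiply \eqref{equation:epsilon_z-2} by $w\varepsilon_z$, integrate by parts to extract a coercive damping, absorb the nonlocal terms via Lemma~\ref{lemma:weight-estimate}, and integrate the resulting differential inequality. Two points deserve comment.

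\textbf{Coercivity.} Your argument for $\mathcal L(z)\leq -(\delta+\eta/4)$ (continuity at $z=0$ where $\mathcal L(0)=-(\delta+\eta/2)$, then ``$-K$ dominates for $z>0$'') is correct but needs the intermediate step you only allude to: the $-K(\partial_z^{-1}\phi+\beta z)$ contribution vanishes as $z\downarrow 0$, so one must first secure a neighborhood $[0,z_0]$ on which $g(z)=\mathcal L(z)+K(\partial_z^{-1}\phi+\beta z)$ stays below $-(\delta+\eta/4)$ by continuity, and only then invoke $K$ large on $[z_0,z^*]$. The paper takes a different route that avoids this two-zone split: using $\partial_z^{-1}\phi\geq \phi z$ (when $\alpha\leq 0$) or $\partial_z^{-1}\phi\leq z$ (when $\alpha>0$), it obtains the sharper bound $\mathcal L(z)\leq -(\delta+\eta/2)$ uniformly on $[0,z^*]$ \emph{without} needing $K$ large for coercivity---the $K(\partial_z^{-1}\phi+\beta z)$ term is simply discarded as having favorable sign, and $K$ large is invoked only for the nonlocal terms.

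\textbf{Source term.} Here there is a genuine gap. You bound $\int_0^{z^*}(\beta\phi'+(\beta+1)z\phi'')w\varepsilon_z\,dz$ by $A^{1/2}\mathcal E_1$, but $A$ from Lemma~\ref{lemma:weight-estimate} is not the relevant quantity: one needs $\bigl(\int_0^{z^*}(\beta\phi'+(\beta+1)z\phi'')^2 w\,dz\bigr)^{1/2}$ to be finite. Since $\phi'\sim z^{-\beta/(\beta+1)}$ near $0$, the integrand $(\phi')^2 w\sim z^{-2\beta/(\beta+1)+\alpha}$ is \emph{not} integrable at $0$ with the chosen $\alpha$. What saves the estimate is the cancellation
\[
\beta\phi'+(\beta+1)z\phi'' = O\bigl(z^{\frac{2}{\beta+1}-1}\bigr)\quad\text{as }z\downarrow 0,
\]
coming from the second-order expansion \eqref{asympt z to 0}; this is precisely the content of \eqref{condition:beta-2} in the paper and is the reason the second orthogonality condition in \eqref{orthogonality} (and hence the modulation law \eqref{equation:modulation-2}) was chosen as it was. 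With this cancellation the weighted source integral is finite and your Young-inequality step goes through. You should make this explicit.

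A minor remark: in your final integration, if the source and the coercivity have the \emph{same} rate $\delta+\eta/8$ as you wrote, the integrated source picks up a linear factor $(s-s_0)$ rather than the $e^{-\eta(s-s_0)/8}$ you state; the conclusion $\leq 2\gamma e^{-\delta s}$ still holds for $s_0$ large since $(s-s_0)e^{-\eta s/8}\to 0$, but the displayed inequality is not quite right. The paper avoids this by keeping the source one notch faster than the coercivity.
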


\begin{proof}

We recall $w(z) = z^{\alpha} e^{-Kz}$ and $\alpha = \frac{|1-\beta|-2+\frac\eta2}{\beta+1}$. Multiplying \eqref{equation:epsilon_z-2} by $w\varepsilon_z$ and integrating over $[0,z^*]$, one obtains that
\begin{equation} \label{interior:id:expression2}
\begin{split}
    &\frac{1}{2}\frac{d}{ds} \int_0^{z^*}  w \varepsilon_z ^2 dz  - \int_0^{z^*} ( \frac{\lambda_s}{\lambda} + \frac{\nu_s}{\nu}  + \phi +\varepsilon)   w \varepsilon_z^2 dz 
     + \int_0^{z^*} (\partial_z^{-1} \phi - \frac{\nu_s}{\nu} z + \partial_z^{-1} \varepsilon) \varepsilon_{zz}  w \varepsilon_z dz
     \\
     &- \int_0^{z^*} \phi' \varepsilon  w \varepsilon_z dz + 
     \int_0^{z^*} \partial_z^{-1} \varepsilon   w \varepsilon_z \phi'' dz \\
     = &-2\nu \Big(\int_0^{\frac{1}{\nu}} (\phi + \varepsilon)^2 (z) dz\Big) \int_0^{z^*}  \Big(\beta \phi' + (\beta+1) z\phi''\Big)  w \varepsilon_z dz.
\end{split}
\end{equation}
Now we estimate all terms in \eqref{interior:id:expression2}.\\

\noindent \underline{Potential and transport terms}. Integrating by parts yields
\begin{equation}  \label{interior:id:expression}
    \begin{split}
        & - \int_0^{z^*} ( \frac{\lambda_s}{\lambda} + \frac{\nu_s}{\nu}  + \phi +\varepsilon)   w \varepsilon_z^2 dz +\int_0^{z^*} (\partial_z^{-1} \phi - \frac{\nu_s}{\nu} z + \partial_z^{-1} \varepsilon) \varepsilon_{zz}  w \varepsilon_z dz
        \\
        = & \Big(\int_0^{z^*} (\phi+\varepsilon)(\tilde{z})d\tilde{z}- \frac{\nu_s}{\nu}z^* \Big) \frac{1}{2}w(z^*) \varepsilon_z^2(z^*) \\
        &+\int_0^{z^*} \left(- \frac{3}{2} \phi-\frac 32 \varepsilon-\frac{\lambda_s}{\lambda}-\frac{1}{2}\frac{\nu_s}{\nu}-\frac{1}{2}\left(\pa_z^{-1}\phi -\frac{\nu_s}{\nu}z+\pa_z^{-1}\varepsilon \right)\frac{w_z}{w}\right) w\varepsilon_z^2dz 
    \end{split}
\end{equation}
To deal with the boundary term, we compute that the transport field is outgoing at $z^*$, i.e.:
\begin{equation}\label{particle-moving}
   \int_0^{z^*} (\phi+\varepsilon)(\tilde{z})d\tilde{z}- \frac{\nu_s}{\nu}z^*\geq \left(\beta - \sup\limits_{0\leq z\leq \frac1{\nu(s)}} |\varepsilon| \right)z^*\geq \left(\beta -\sqrt{C^*}\tilde K e^{-\frac\delta2 s_0}\right)z^*\geq 0, 
\end{equation}
where we used \eqref{equation:modulation-2}, \eqref{modulationequations} and \eqref{bd:epsilon}, and took $s_0$ sufficiently large. This implies:
\be \label{interior:bd:inter2}
 \Big(\int_0^{z^*} (\phi+\varepsilon)(\tilde{z})d\tilde{z}- \frac{\nu_s}{\nu}z^* \Big) \frac{1}{2}w(z^*) \varepsilon_z^2(z^*)\geq 0.
\ee
Now since $w=z^\alpha e^{-Kz}$, we know that $\frac{w_z}w = \alpha z^{-1} - K = \frac{|1-\beta|-2+\frac\eta2}{(\beta+1)z}  - K$. Using this, \eqref{modulationequations} and \eqref{bd:epsilon} we get the first identity (where the constant involved in the $O()$ depends on $\tilde K$, $\alpha$ and $z^*$):
\begin{equation}\label{spectral-gap}
\begin{split}
    &-2\Big(\frac{\lambda_s}{\lambda} +\frac{\nu_s}{2\nu}  + \frac{3}{2}\phi +\frac 32 \varepsilon+ \frac{1}{2}( \partial_z^{-1} \phi - \frac{\nu_s}{\nu} z +\pa_z^{-1}\varepsilon)\frac{w_z}{w} \Big)
    \\
    = &2 +\beta - 3\phi +O(e^{-\frac{\delta}{2}s})- \Big( \partial_z^{-1} \phi + \beta z + O(e^{-\frac{\delta}{2}s}z)  \Big) \Big(\alpha z^{-1}  - K\Big).
\end{split}
\end{equation}
We now distinguish between the cases $-1<\alpha\leq 0$ and $0<\alpha<1$. When $-1<\alpha\leq 0$ using the fact that $\partial_z^{-1}\phi \geq \phi z$ due to $\phi'<0$, and then $\phi\leq 1$, we get
\begin{align*}
   \eqref{spectral-gap} &\geq 2 + \beta - (3\phi +\alpha \phi) - \alpha \beta +O(e^{-\frac{\delta}{2}s}) \  \geq \ 2+\beta -3 - \alpha - \alpha \beta +O(e^{-\frac{\delta}{2}s}) \\
    &\qquad =\beta-1-(\beta+1)(\frac{|1-\beta|-2+\frac\eta2}{\beta+1})+O(e^{-\frac{\delta}{2}s}) \ \geq \ \delta + \frac\eta2 +O(e^{-\frac{\delta}{2}s}) .
\end{align*}
where we used \eqref{id:defeta-defdelta} for the last inequality. When $0<\alpha<1$ notice that then $\beta>1$, and by using the fact that $\phi\leq 1$ and that $\partial_z^{-1}\phi \leq  z$, one obtains
\begin{equation*}
\begin{split}
    \eqref{spectral-gap} &\geq 2 + \beta -3\phi -\alpha(\beta+1)+O(e^{-\frac{\delta}{2}s})
 \geq -1+\beta -(\beta+1)(\frac{|1-\beta|-2+\frac\eta2}{\beta+1})+O(e^{-\frac{\delta}{2}s})
    \\
    & \geq 2-\frac{\eta}{2} +O(e^{-\frac{\delta}{2}s})= \delta + \frac\eta2 +O(e^{-\frac{\delta}{2}s}).
\end{split}
\end{equation*}
In summary, for any $\beta>0$, we get the repulsivity estimate:
\begin{equation}
 -2\Big(\frac{\lambda_s}{\lambda} +\frac{\nu_s}{2\nu}  + \frac{3}{2}\phi +\frac 32 \varepsilon+ \frac{1}{2}( \partial_z^{-1} \phi - \frac{\nu_s}{\nu} z +\pa_z^{-1}\varepsilon)\frac{w_z}{w} \Big) \geq  \delta + \frac\eta2+O(e^{-\frac{\delta}{2}s}),
\end{equation}
and hence deduce the spectral gap like coercivity estimate:
\be \label{interior:bd:inter1}
2\int_0^{z^*} \left(- \frac{3}{2} \phi-\frac 32 \varepsilon-\frac{\lambda_s}{\lambda}-\frac{1}{2}\frac{\nu_s}{\nu}-\frac{1}{2}\left(\pa_z^{-1}\phi -\frac{\nu_s}{\nu}z+\pa_z^{-1}\varepsilon \right)\frac{w_z}{w}\right) w\varepsilon_z^2dz   \geq \left( \delta + \frac\eta2+O(e^{-\frac{\delta}{2}s})\right)\mathcal E_1(s).
\ee
Injecting \eqref{interior:bd:inter2} and \eqref{interior:bd:inter1} in \eqref{interior:id:expression} one finally finds that for the potential and transport terms:
\begin{align}  \label{interior:bd:potentialtransport}
        & - 2\int_0^{z^*} ( \frac{\lambda_s}{\lambda} + \frac{\nu_s}{\nu}  + \phi +\varepsilon)   w \varepsilon_z^2 dz +2\int_0^{z^*} (\partial_z^{-1} \phi - \frac{\nu_s}{\nu} z + \partial_z^{-1} \varepsilon) \varepsilon_{zz}  w \varepsilon_z dz\\
 \nonumber       \geq &\left( \delta + \frac\eta2+O(e^{-\frac{\delta}{2}s})\right)\mathcal E_1(s).
\end{align}

\noindent \underline{Nonlocal terms}. Notice that since $\varepsilon(z=0)=0$, by the 
Cauchy-Schwarz inequality, one obtains that for all $z\in [0,z^*]$:
\begin{align}\label{bound:epsilon}
  &  \varepsilon(z) \leq \mathcal E_1  ( \int_0^{z} \frac{1}{w}(\widetilde{z}) d\widetilde{z})^{\frac{1}{2}} , \\
\label{bound:primitive-of-epsilon}
&   \partial_z^{-1}\varepsilon(z) =\int_0^z \varepsilon(\widetilde{z}) d\widetilde{z} \leq \mathcal E_1 \int_0^z (\int_0^{\widetilde{z}} \frac{1}{w}(\xi) d\xi)^{\frac{1}{2}} d\widetilde{z}.
\end{align}
%\begin{comment}
For $K>1$ large enough so that 
\begin{equation}\label{condition:K}
    \frac{\ln K}{K} \leq K^{-\frac23},
\end{equation}
we introduce the parameter $\rho=\min(\frac13,\frac1{\beta+1}),$ and, thanks to \eqref{bound:epsilon}, \eqref{bound:primitive-of-epsilon} and Lemma \ref{lemma:weight-estimate}, by the 
Cauchy-Schwarz inequality, one obtains 
\begin{equation}\label{estimate:linear-1}
   2\int_0^{z^*} \Big| \phi' \varepsilon  w \varepsilon_z \Big| dz \leq 2\Big(\int_0^{z^*} |\phi'|^2 w ( \int_0^{z} \frac{1}{w}(\widetilde{z}) d\widetilde{z}) dz\Big)^{\frac{1}{2}} \mathcal E_1^2 \leq \frac{C(\beta,\eta)}{K^\rho}\mathcal E_1^2,
\end{equation}
where the constant $C(\beta,\eta)$ depends on $\eta$ through $\alpha$, and 
\begin{equation}\label{estimate:linear-2}
    2\int_0^{z^*} \Big| \partial_z^{-1} \varepsilon   w \varepsilon_z \phi'' \Big| dz \leq 2\Big(\int_0^{z^*} |\phi''|^2 w(\int_0^z (\int_0^{\widetilde{z}} \frac{1}{w}(\xi)d\xi )^{\frac{1}{2}} d\widetilde{z} )^2 dz \Big)^{\frac{1}{2}} \mathcal E_1^2 \leq \frac{C(\beta,\eta)}{K^\rho} \mathcal E_1^2.
\end{equation}

\noindent \underline{Source terms}.  Finally, we consider 
\begin{equation}\label{estimate:nonlocal-1}
    4\nu \Big(\int_0^{\frac{1}{\nu}} (\phi + \varepsilon)^2 (z) dz\Big) \int_0^{z^*}  \Big(\beta \phi' + (\beta+1) z\phi''\Big) w \varepsilon_z dz.
\end{equation}
Using \eqref{bd:source} one obtains, for $C$ independent of the bootstrap constants:
%for $s\in[0,s^*+\eta]$ with $\eta>0$}. 
\begin{equation}\label{condition:beta-1}
\begin{split}
     4\nu \Big(\int_0^{\frac{1}{\nu}} (\phi + \varepsilon)^2 (z) dz\Big) \leq &  C e^{-(\frac{\delta}{2}+\frac \eta 4) s }. 
\end{split}
\end{equation}
Thanks to Proposition \ref{proposition:profile}, one has the crucial cancellation at the origin
\begin{equation}\label{condition:beta-2}
 |   \beta \phi' + (\beta+1) z\phi'' |\lesssim  z^{\frac2{\beta+1}-1} \quad \text{as } z\rightarrow 0.
\end{equation}
Since $w=z^\alpha e^{-Kz}$ with $\alpha = \frac{|1-\beta|-2+\frac\eta2}{\beta+1}$, one then has using Cauchy-Schwarz
\begin{equation}\label{condition:beta-3}
   \int_0^{z^*}  \Big(\beta \phi' + (\beta+1) z\phi''\Big)  w \varepsilon_z dz \leq \Big(\int_0^{z^*} \Big(\beta \phi' + (\beta+1) z\phi''\Big)^2 w(z) dz\Big)^{\frac12} \mathcal E_1 \leq C({\beta,\eta,z^*,K}) \mathcal E_1.
\end{equation}
Therefore, using \eqref{condition:beta-1} and \eqref{condition:beta-3}, by Young's inequality $xy\leq K^\rho x^2/2+y^2/2K^\rho$, one has:
\begin{equation}\label{estimate:nonlocal-2}
\begin{split}
      &4\nu \Big(\int_0^{\frac{1}{\nu}} (\phi + \varepsilon)^2 (z) dz\Big) \int_0^{z^*}  \Big(\beta \phi' + (\beta+1) z\phi''\Big)  w \varepsilon_z dz \leq C({\beta,\eta,z^*,K}) e^{-(\delta+\frac \eta 2) s} + \frac{\mathcal E_1^2}{{K^\rho}}.
\end{split}
\end{equation}

\noindent \underline{Conclusion}. Injecting \eqref{interior:bd:potentialtransport}, \eqref{estimate:linear-1}, \eqref{estimate:linear-2} and \eqref{estimate:nonlocal-2} in \eqref{interior:id:expression2} shows:
\begin{equation}\label{estimate:together-1}
   \frac{d}{ds}\mathcal E_1^2 + \Big(\delta + \frac\eta2 - \frac{C(\beta,\eta)}{K^\rho} - C({\beta,\eta,K,z^*,\tilde K})e^{-\frac\delta2 s} \Big)\mathcal E_1^2  \leq C({\beta,\eta,z^*,K})e^{-(\delta+\frac \eta 2) s} .
\end{equation}
Choosing $K$ large enough so that 
\begin{equation}\label{condition:K-2}
    \frac{C(\beta,\eta)}{K^\rho}\leq\frac\eta8,
\end{equation}
and $s_0$ large enough so that 
\begin{equation}\label{condition:s0-3}
    C({\beta,\eta,K,z^*,\tilde K})e^{-\frac\delta2 s_0}\leq\frac\eta8,
\end{equation}
we have that \eqref{estimate:together-1} becomes
\begin{equation}\label{differential-inequality:E1}
    \frac{d}{ds}(e^{(\delta+\frac{\eta}{4}) s}\mathcal E_1^2)\leq C({\beta,\eta,z^*,K}) e^{-\frac{\eta}{4} s} .
\end{equation}
Integrating between $s_0$ and $s$ \eqref{differential-inequality:E1} using \eqref{bd:eini} gives 
\begin{equation*}
\begin{split}
    \mathcal E_1^2&\leq e^{(\delta+\frac{\eta}{4}) (s_0-s)}\mathcal E_1^2(s_0)+C({\beta,\eta,z^*,K})  e^{-(\delta+\frac{\eta}{4}) s} <\gamma e^{-\delta s}+C({\beta,\eta,z^*,K})  e^{-(\delta+\frac{\eta}{4}) s} <2 \gamma e^{-\delta s} ,
\end{split}
\end{equation*}
where we have chosen $s_0$ large enough so that 
\begin{equation}\label{condition:s0-4}
    C({\beta,\eta,z^*,K})  e^{-\frac\eta4 s_0} < \gamma.
\end{equation}
This is \eqref{bd:int e1}.

\end{proof}

\begin{lemma}[Exterior Estimate]\label{lemma:main}
For any $0<\eta<\min (\beta,1)$ there exists $\bar z^*\geq 1$, such that the following holds true for all $z^*\geq \bar z^*$. For any $K\geq 1$ there exists $\gamma^*>0$ such that for all $0<\gamma \leq \gamma^*$, for all $\tilde K\geq 1$ and $\tilde\nu_0^*>0$, for $s_0^*$ large enough, for any $s_0\geq s_0^*$, for any solution which is trapped on $[s_0,s_1]$ we have for $s\in [s_0,s_1]$:
\begin{eqnarray}\label{bd:int e2}
\mathcal E_2^2(s)\leq  e^{-\delta s}.
\end{eqnarray}

\end{lemma}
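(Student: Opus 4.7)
\emph{Strategy.} I would apply a barrier argument to control $|\varepsilon|$ by $M(s) := e^{-\delta s/2}$ on the moving domain $\{z^* \leq z \leq 1/\nu(s)\}$. Rewriting \eqref{equation:epsilon-1} in transport--reaction form,
\begin{equation*}
 \varepsilon_s + V \varepsilon_z = g\,\varepsilon - (\partial_z^{-1}\varepsilon)\phi' + F,
\end{equation*}
with $V = \partial_z^{-1}\phi + \partial_z^{-1}\varepsilon - \tfrac{\nu_s}{\nu} z$, $g = \tfrac{\lambda_s}{\lambda} + 2\phi + \varepsilon$, and $F = (\tfrac{\lambda_s}{\lambda}+1)\phi + (\beta + \tfrac{\nu_s}{\nu})z\phi' - 2\nu \int_0^{1/\nu}(\phi + \varepsilon)^2 dz$, Lemma \ref{lemma:modulation} yields $g = -1 + 2\phi(z) + \varepsilon + O(e^{-\delta s/2})$. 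Using the decay \eqref{asympt z to infty} of $\phi_\beta$, I would fix $\bar z^* \geq 1$ large enough so that $2\phi_\beta(\bar z^*)$ and $\sup_{z \geq \bar z^*} z|\phi_\beta'(z)|$ are each smaller than any preassigned fraction of the slack $1 - \delta/2 \geq \eta/2 > 0$. This makes $g$ a near-$(-1)$ damping throughout the exterior region.

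\emph{Boundary bounds.} For the left boundary, Cauchy--Schwarz as in \eqref{bd:epsilon:interiorzone} combined with the improved interior estimate from Lemma \ref{lemma:interior} gives
\begin{equation*}
 |\varepsilon(s, z^*)| \leq C(K, z^*)\, \mathcal{E}_1(s) \leq C(K, z^*)\sqrt{2\gamma}\, e^{-\delta s/2},
\end{equation*}
so choosing $\gamma^* \leq 1/(4 C(K, z^*)^2)$ yields $|\varepsilon(s, z^*)| \leq M(s)/2$. For the right boundary, the integral constraint \eqref{equation:epsilon-bc} forces $\partial_z^{-1}\phi(1/\nu) + \partial_z^{-1}\varepsilon(1/\nu) = 0$, hence $V(s, 1/\nu) = -\nu_s/\nu^2 = \tfrac{d}{ds}(1/\nu)$: the curve $z = 1/\nu(s)$ is itself a characteristic. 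Along it $\varepsilon$ satisfies the closed ODE
\begin{equation*}
 \frac{d}{ds}\varepsilon(s, 1/\nu(s)) = g(s, 1/\nu)\,\varepsilon + \Big(\int_0^{1/\nu}\phi\, d\tilde z\Big)\phi'(1/\nu) + F(s, 1/\nu),
\end{equation*}
whose source is of size $O(\nu^{1+1/\beta}) + O(e^{-(\delta/2 + \eta/4)s})$ by \eqref{asympt z to infty} and \eqref{bd:source} and whose damping satisfies $g(s, 1/\nu) = -1 + o(1)$; Gr\"onwall then gives $|\varepsilon(s, 1/\nu(s))| \leq M(s)/2$ for $s_0$ large.

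\emph{First-touch argument and main obstacle.} Initially $\mathcal{E}_2(s_0) \leq M(s_0)/4$ by \eqref{bd:eini}. Assume for contradiction $s_* := \inf\{s : \sup_{z^* \leq z \leq 1/\nu(s)} |\varepsilon(s, z)| = M(s)\} \leq s_1$. Since the boundary values stay strictly below $M$, the touch happens at an interior $z_* \in (z^*, 1/\nu(s_*))$; WLOG $\varepsilon(s_*, z_*) = M(s_*)$, so $\partial_z\varepsilon(s_*, z_*) = 0$ and $\partial_s\varepsilon(s_*, z_*) \geq -(\delta/2) M(s_*)$. Evaluating the PDE at $(s_*, z_*)$: the damping piece gives $g\varepsilon \leq (-1 + 2\phi(z_*) + M + O(e^{-\delta s/2}))\,M$; the nonlocal piece splits via $|\partial_z^{-1}\varepsilon(z)| \leq C(K, z^*)\mathcal{E}_1 + M(z - z^*)$ into contributions $C(K, z^*)\sqrt{\gamma}|\phi_\beta'(z_*)|$ and $\sup_{z \geq z^*} z|\phi_\beta'(z)|\,M$, both arbitrarily small; and $|F|$ is absorbed by using the decay of $\phi_\beta$ and $z\phi_\beta'$ for $z \geq z^*$ together with \eqref{bd:source}. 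Summing, the right-hand side becomes strictly less than $-(\delta/2)M(s_*)$, contradicting the first-touch inequality. The main obstacle is that the available damping is only $\approx 1$ while $\delta/2$ can be as large as $1 - \eta/2$ when $\beta \geq 1$; every perturbation of the damping (the tail of $2\phi_\beta$ past $z^*$, the $\sqrt{\gamma}$ from the interior estimate, the modulation remainders of order $e^{-\delta s/2}$, and the pressure-type source) must therefore be strictly smaller than $\eta/2$, which explains the nested ordering of the constants $\bar z^*, \gamma^*, s_0^*$ in the statement of the lemma.
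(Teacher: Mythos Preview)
Your proof is correct and rests on the same maximum-principle idea as the paper, but the organisation differs in a useful way. The paper builds explicit barriers $f^{\pm}=\pm\bigl((\tfrac14+\tfrac{\tilde K'}{4})e^{-\delta s/2}-\varepsilon\bigr)$, verifies $(\partial_s+\mathcal L)f^{\pm}\gtrless 0$, and then \emph{iterates} an inner bootstrap constant $\tilde K'\mapsto (1+\tilde K')/4$ finitely many times to drive it below $1$. The iteration is there precisely to remove the trapped constant $\tilde K$ from the nonlocal term $\partial_z^{-1}\varepsilon\,\phi'$, since a priori one only has $|\varepsilon|\leq \tilde K e^{-\delta s/2}$ on the exterior. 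Your first-touch formulation achieves the same elimination in one shot: at the touching time $s_*$ one has $|\varepsilon(s_*,\cdot)|\leq M(s_*)$ on the whole exterior by the very definition of $s_*$, so the nonlocal contribution is automatically of size $\sup_{z\geq z^*}z|\phi'(z)|\,M$ rather than $\tilde K M$, and no iteration is needed. Your handling of the right boundary (recognising $z=1/\nu$ as a characteristic via \eqref{equation:epsilon-bc} and running a scalar Gr\"onwall along it) coincides with the paper's \eqref{partical-stay}. Both routes rely on the \emph{improved} interior bound $\mathcal E_1\leq \sqrt{2\gamma}\,e^{-\delta s/2}$ from Lemma~\ref{lemma:interior} to control $\varepsilon(s,z^*)$ and the $[0,z^*]$ contribution to $\partial_z^{-1}\varepsilon$, which is why $\gamma^*$ must depend on $K,z^*$ in either argument.
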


\begin{proof}

We use a standard comparison principle for transport operators, together with a bootstrap argument to control nonlocal effects. To implement this bootstrap argument, we assume in addition that on $[s_0,s_1]$ there holds:
\be \label{exteriormathcalEbootstrap}
\mathcal E_2(s)\leq \tilde K' e^{-\frac \delta 2 s},
\ee
for some constant $0<\tilde K'\leq \tilde K$.
\begin{remark}
 The reason we apply this bootstrap argument here is to make $z^*$ independent of $\tilde K$ (see \eqref{condition:delta*}). This is natural since $z^*$, the boundary between the interior region and the exterior region, should be independent of the size of the remainder which is $\tilde K$.
\end{remark}
Clearly \eqref{exteriormathcalEbootstrap} is satisfied if one chooses $\tilde K'=\tilde K$ from \eqref{bd:etrap}. By applying the modulations \eqref{equation:modulation-1} and \eqref{equation:modulation-2}, we can rewrite \eqref{equation:epsilon-1} as
\begin{equation}\label{equation:epsilon-2}
    \begin{split}
        &\varepsilon_s +\mathcal L \varepsilon    = F ,
    \end{split}
    \end{equation}
where $\mathcal L$ is the transport operator (note that it has a nonlinear part):
$$
\mathcal L v=  - \frac{\lambda_s}{\lambda}v - \frac{\nu_s}{\nu} z v_z - 2\phi v + \partial_z^{-1} \phi v_z    - \varepsilon v + \partial_z^{-1} \varepsilon v_z 
$$
and the source term is
$$
F = -\partial_z^{-1} \varepsilon \phi' +2\nu\Big(\int_0^{\frac{1}{\nu}} (\phi+\varepsilon)^2(z)dz\Big)\Big(-1+\phi -(\beta+1)z\phi'\Big).
$$

\noindent \textbf{Step 1}. \emph{A supersolution for $\pa_s+\mathcal L$ on $[z^*,\nu^{-1}]$}. We introduce
$$
f(s,z)= e^{-\frac{\delta}{2}s}
$$
and claim that there exists $z^*$ large enough such that for $s_0$ large enough, for all $ s_0\leq s \leq s_1$ and $z\geq z^*$:
\be \label{exterior:id:supersolution}
(\pa_s +\mathcal L) f\geq \frac{\eta}{4} e^{-\frac{\delta}{2}s}.
\ee
We now prove \eqref{exterior:id:supersolution}. We compute using $\pa_z f=0$, \eqref{modulationequations}, $\phi\lesssim z^{-1/\beta}$ as $z\rightarrow \infty$ and \eqref{bd:epsilon}:
\begin{align*}
(\pa_s +\mathcal L) f & = \left(-\frac{\delta}{2} - \frac{\lambda_s}{\lambda} - 2\phi  - \varepsilon \right)f\\
&=\left(-\frac{\delta}{2} +1+O(e^{-\frac \delta 2 s})+O(z^{-\frac 1 \beta})+O(\sqrt{C^* } \tilde K e^{-\frac \delta 2}s)\right)e^{-\frac \delta 2 s}\\
&\geq \left(\frac{\eta}{2}+O(e^{-\frac \delta 2 s})+O(z^{-\frac 1 \beta})+O(C(\tilde K,z^*,\beta,\eta)e^{-\frac \delta 2}s)\right) e^{-\frac \delta 2 s} 
\end{align*}
where we used $\delta=2\min(\beta,1)-\eta$. This implies \eqref{exterior:id:supersolution} upon choosing $z^*$ large enough and then $s_0$ large enough.\\

\noindent \textbf{Step 2}. \emph{Estimate for the source term}. We claim that for all $z\geq z^*$ and $s_0\leq s \leq s_1$, we have 
\be \label{exterior:bd:source}
|F(s,z)|\leq \frac{\eta}{8}\left(\frac 14+\frac{\tilde K'}{4}\right)e^{-\frac{\delta }{2}s} .
\ee
To prove this inequality, observe that for $0\leq z\leq z^*$ we have using \eqref{bd:epsilon:interiorzone} and \eqref{bd:int e1} that
\be \label{exterior:bd:interiorvarepsilon}
|\varepsilon (z)|\lesssim \sqrt{2C^* \gamma} e^{-\frac{\delta}{2}s}
\ee
while for $z\geq z^*$ one has $|\varepsilon(z)|\leq \tilde K' e^{-\frac \delta 2 s}$ thanks to \eqref{exteriormathcalEbootstrap}. Therefore, thanks to the behavior of $\phi'$ near $\infty$, one has
\begin{equation} \label{exterior:bd:inter1}
\begin{split}
    |\partial_z^{-1} \varepsilon \phi'| &\leq \|\varepsilon\|_{L^\infty} z |\phi'(z)| \leq C\Big(\sqrt{2C^* \gamma}e^{-\frac \delta 2 s}+\tilde K' e^{-\frac{\delta }{2}s} \Big)z^{-\frac 1 \beta} \leq \frac{\eta}{16}\left(\frac 14+\frac{\tilde K'}{4}\right)e^{-\frac{\delta }{2}s} 
\end{split}
\end{equation}
where we chose $z^*$ large enough and then $\gamma$ small enough such that:
\begin{equation}\label{condition:delta*}
C\sqrt{2C^* \gamma}z^{*-\frac 1 \beta} \leq  \frac{\eta}{64 } \qquad \mbox{and} \qquad C  z^{*-\frac 1 \beta} \leq \frac{\eta}{64}.
\end{equation}
From Proposition \ref{proposition:profile}, we know that $\Big|-1+\phi -(\beta+1)z\phi'\Big|\leq C$ is uniformly bounded. Therefore using \eqref{bd:source} we get:
\be \label{exterior:bd:inter2}
\left| 2\nu\Big(\int_0^{\frac{1}{\nu}} (\phi+\varepsilon)^2(z)dz\Big)\Big(-1+\phi -(\beta+1)z\phi'\Big)\right| \lesssim e^{-(\frac \delta 2+\frac \eta 4)s}\leq \frac{\eta}{64}e^{-\frac \delta 2 s}
\ee
for $s_0$ large enough. Summing \eqref{exterior:bd:inter1} and \eqref{exterior:bd:inter2} implies \eqref{exterior:bd:source}.\\

\noindent \textbf{Step 3}. \emph{Applying the comparison principle}. Let
\be \label{exterior:def:fpm}
f^\pm= \pm \left( (\frac 14+\frac{\tilde K'}{4})f-\varepsilon \right).
\ee
Then using \eqref{equation:epsilon-2}, \eqref{exterior:id:supersolution} and \eqref{exterior:bd:source} one computes that for $s\leq s\leq s_1$ and $z\in [z^*,\nu^{-1}]$:
$$
(\pa_s+\mathcal L)f^+=(\frac 14+\frac{\tilde K'}{4}) (\pa_s+\mathcal L)f+F\geq (\frac 14+\frac{\tilde K'}{4})\frac{\eta}{4} e^{-\frac{\delta}{2}s}-\frac{\eta}{8}\left(\frac 14+\frac{\tilde K'}{4}\right)e^{-\frac{\delta }{2}s}\geq 0
$$
Similarly, $(\pa_s+\mathcal L)f^-\leq 0$. Recall that from \eqref{particle-moving} we know that the particles are always moving from region $0\leq z\leq z^*$ to region $z^*\leq z \leq \frac1\nu$. At the boundary $z=z^*$ one has using \eqref{exterior:bd:interiorvarepsilon} that:
$$
f^+(s,z^*)= (\frac 14+\frac{\tilde K'}{4})e^{-\frac{\delta}{2}s}-\varepsilon (s,z^*)\geq (\frac{1}{4}-\sqrt{2C^* \gamma})e^{-\frac{\delta}{2}s} \geq 0
$$
provided that
\begin{equation}\label{condition:delta*onemore}
C\sqrt{2C^* \gamma} \leq \frac 14.
\end{equation}
Similarly, $f^-(s,z^*)\leq 0$. At the point $z(s)=\frac1{\nu(s)}$, the characteristics of the full transport field stays on the boundary since
\begin{equation}\label{partical-stay}
    \frac{d}{ds}\frac1{\nu(s)}= -\frac{\nu_s}{\nu^2} = -\frac{\nu_s}{\nu^2} + \int_0^{\frac1\nu} (\phi+\varepsilon) dz,
\end{equation}
where we have used \eqref{equation:epsilon-bc}. Thanks to \eqref{bd:eini}, we know initially
\begin{equation*}
    f^+(s_0,\frac1{\nu_0}) =  (\frac 14+\frac{\tilde K'}{4})e^{-\frac{\delta}{2}s_0}-\varepsilon (s_0,\frac1{\nu_0})\geq (\frac 14+\frac{\tilde K'}{4})e^{-\frac{\delta}{2}s_0} - \frac14 e^{-\frac{\delta}{2}s_0} \geq 0,
\end{equation*}
thus $f^+(s,\frac1{\nu(s)}) \geq 0$ for all $s\geq s_0.$ Similarly $f^-(s,\frac1{\nu(s)}) \leq 0$. At initial time $s=s_0$, we have $f^+(s_0,z)\geq 0 $ because of \eqref{bd:eini}, and similarly $f^-(s_0,z)\leq 0 $. Therefore, one can apply the maximum principle and obtain that $f^+(s,z)\geq 0$ and $f^-(s,z)\leq 0$ for all $s_0\leq s \leq s_1$ and $z^*\leq z \leq \nu^{-1}$. By their definition \eqref{exterior:def:fpm} this implies:
\be \label{exterior:bd:inter5}
|\varepsilon (s,z)|\leq (\frac{1}{4}+\frac{\tilde K'}{4})e^{-\frac{\delta}{2}s} \qquad \mbox{for all }s_0\leq s \leq s_1 \mbox{ and }z^*\leq z \leq \nu^{-1}.
\ee

\noindent \textbf{Step 4}. \emph{End of the proof}. We first set $\tilde K_0'=\tilde K$ so that \eqref{exteriormathcalEbootstrap} is satisfied with constant $\tilde K'=\tilde K'_0$ because of \eqref{exteriormathcalEbootstrap}. Then, we obtain \eqref{exterior:bd:inter5} with constant $\tilde K'=\tilde K'_0$. This implies that \eqref{exteriormathcalEbootstrap} is satisfied with constant $\tilde K'=\tilde K'_1$ given by $\tilde K'_1=\varphi (\tilde K_0')$ with $\varphi (\tilde K')=(1+\tilde K')/4$. We iterate this procedure, and obtain constants $\tilde K_2$, then $\tilde K_3$, ... , $\tilde K_n$ such that  \eqref{exteriormathcalEbootstrap} is satisfied with constants $\tilde K_2$, then $\tilde K_3$, ... , $\tilde K_n$ with $\tilde K_{k+1}=\varphi(\tilde K_{k})$. By iterating a finite (depending on $\tilde K$) number of times $k$ we obtain $\tilde K'_k\leq 1$ and \eqref{exteriormathcalEbootstrap} then implies \eqref{bd:int e2}.

\end{proof}

We can now end the proof of Proposition \ref{pr:bootstrap}.

\begin{proof}[Proof of Proposition \ref{pr:bootstrap}]

We set $\tilde K=3$ (or any $\tilde K >1$). For any $\beta>0$ and $0<\eta<\min (1,\beta)$, we pick $z^*\geq \bar z^*$ where $\bar z^*(\beta,\eta)$ is given by Lemma \ref{lemma:main}, then we pick $K\geq K^*$ where $K^*(\beta,\eta,z^*)$ is given by Lemma \ref{lemma:interior}, and then we pick $0<\gamma\leq \min (\gamma^*,1)$ where $\gamma^*(\beta,\eta,z^*,K^*)$ is given by Lemma \ref{lemma:main}.

Then for any $\tilde \nu_0^*>0$, there exists $s_0^*$ such that the conclusions of Lemmas \ref{lemma:modulation}, \ref{lemma:interior} and \ref{lemma:main} are simultaneously valid for all $s_0\geq s_0^*$.

For such choices of constants, consider an initial data $a_0$ trapped in the sense of Definition \ref{definition:initial-subcritical}. We define:
$$
s^*=\sup \ \{s_1\geq s_0, \ a \mbox{ is trapped in the sense of Definition \ref{definition:trap-subcritical} on } [s_0,s_1]\}.
$$
If $s^*=\infty$, then Proposition \ref{pr:bootstrap} is proved. We assume by contradiction $s^*<\infty$. Then, applying Lemmas \ref{lemma:modulation}, \ref{lemma:interior} and \ref{lemma:main} we obtain at time $s^*$:
$$
 \frac 12 e^{-s^*}\leq \lambda(s^*) \leq \frac 32 e^{-s^*}, \quad \frac{\tilde \nu_0}2 e^{-\beta s^*}\leq \nu(s^*) \leq \frac 32 \tilde \nu_0 e^{-\beta s^*}, \quad \mathcal E_1^2(s^*)  \leq 2\gamma e^{-\delta s^*}, \quad \mathcal E_2^2(s^*)\leq  e^{-\delta s^*}.
$$
Since $\tilde K=3$, the bounds of Definitions \ref{definition:trap-subcritical} are thus strictly satisfied at time $s^*$, and by a continuity argument, there exists $s'>s^*$ such that these bounds are satisfied on $[s^*,s']$. But this contradicts the definition of $s^*$. Hence $s^*=\infty$ and Proposition  \ref{pr:bootstrap} is proved.

\end{proof}

\begin{remark}
 Notice that when $\eta$ getting closer to $0$, one can get better decay. However, the decay rate of $\mathcal E_1$ and $\mathcal E_2$ cannot reach or be faster than $e^{-\beta s}$ when $0<\beta\leq 1$ and $e^{-s}$ when $\beta> 1$. The reason is:
 \begin{itemize}
     \item For $0< \beta \leq 1$, the decay of $\nu$ is only $e^{-\beta s}$.
     \item For $\beta>1$, in order to make \eqref{condition:beta-3} integrable near $0$, one needs $\frac4{\beta+1}-2+\alpha > -1 \Leftrightarrow \alpha > \frac{\beta-3}{\beta+1}$. Such restriction on $\alpha$ makes the spectral gap in \eqref{spectral-gap} strictly less than $2$.
 \end{itemize}
  
\end{remark}

We can now end the proof of Theorem \ref{nonsmooth}.

\begin{proof}[Proof of Theorem \ref{nonsmooth}.]

Pick $\beta>0$. We write $\phi=\phi_\beta$ in the proof for simplicity. Choose then any $0<\eta<\min (\beta,1)$ and let the constants $\tilde K>1$, $K,z^*\geq 1$ and $\gamma>0$ be given by Proposition \ref{pr:bootstrap}. For a fixed $\tilde \nu_0^*>0$, and let $s_0^*$ be given by Proposition \ref{pr:bootstrap} and define $\lambda_0^*=e^{-2s_0^*}$.

Let then, with a parameter $\kappa$ to be fixed later on in the proof:
\be \label{rough:id:defparameters}
\lambda_0\leq \lambda_0^*/2 \quad  \mbox{and} \quad \tilde \nu_0\leq \tilde\nu_0^*/2
\ee
and an initial datum $a_0$ of the form \eqref{nonsmooth:id:initial} satisfying \eqref{nonsmooth:id:initialcond}.\\

\noindent \textbf{Step 1}. \emph{Proof assuming a claim}. We claim that for $\kappa$ small enough, $a_0$ is initially trapped in the sense of Definition \ref{definition:initial-subcritical}, with framework parameters $\eta,\tilde K,K,z^*,\gamma,\lambda_0^*,\tilde \nu_0^*$ defined right above, and decomposition parameters $\bar \lambda_0,\bar \nu_0$ to be determined in Step 2. Assuming this claim, we have using Proposition \ref{pr:bootstrap} that the solution is trapped in the sense of Definition \ref{definition:trap-subcritical} for all $s\in [\bar s_0,\infty)$ where $\bar s_0=\log \bar \lambda_0^{-1}$.

We unwind the self-similar transformation \eqref{id:self-similarvariables} using \eqref{bd:boostrap improved parameters2} and define $T=\int_{\bar s_0}^\infty \lambda(s)ds<\infty$ so that:
$$
t(s)=\int_{\bar s_0}^s \lambda(\tilde s)d\tilde s=T-\int_{s}^\infty \tilde\lambda_\infty e^{-\tilde s}(1+O(e^{-\frac{\delta}{2}\tilde s}))d\tilde s=T- \tilde\lambda_\infty e^{-s}+O(e^{-(1+\frac{\delta}{2})s})
$$
and hence $\tilde\lambda_\infty e^{-s}=(T-t)+O((T-t)^{1+\delta/2})$. We then get using \eqref{bd:epsilon} and \eqref{bd:boostrap improved parameters2} that 
\begin{equation*}
    \| \varepsilon (s)\|_{L^\infty(0,\nu^{-1})}\leq C (T-t)^{\delta/2}, \ \ \lambda(s)=T-t+O((T-t)^{1+\delta/2}),\ \ \nu=\frac{\tilde \nu_\infty}{\tilde \lambda_\infty^\beta} (T-t)^{\beta}+O((T-t)^{\beta+\delta/2}).
\end{equation*}
Back in original variables \eqref{decomp}, this implies the desired results \eqref{nonsmooth:1} and \eqref{nonsmooth:2} in the Theorem by renaming $\delta$ as $2\delta$ and $\tilde \nu_\infty$ as $\tilde \nu_\infty \tilde \lambda_\infty^\beta$.\\

\noindent \textbf{Step 2}. \emph{Proof of the claim}. For $\bar \lambda_0,\bar \nu_0>0$ we define $\bar a_0$ and $\bar \varepsilon_0$ as:
$$
a_0(Z)=\frac{1}{\lambda_0}\phi \left(\frac{Z}{\lambda_0^\beta \tilde \nu_0}\right)+\tilde a_0(Z)=\frac{1}{\bar \lambda_0}\phi \left(\frac{Z}{\bar \lambda_0^\beta \bar \nu_0}\right)+\bar a_0(Z), \  \ \bar \varepsilon_0 (z)=\bar \lambda_0 \bar a_0(\bar \lambda_0^\beta \bar \nu_0 z), \  \ Z = z\bar \lambda_0^\beta \bar \nu_0.
$$
Then, introducing $\mu=\bar \lambda_0^\beta \bar \nu_0 \lambda_0^{-\beta}\tilde \nu_0^{-1} $ we have the two decompositions for $0<z\leq 1$ ($\varepsilon_1$ to $\varepsilon_4$) and $1\leq z \leq \bar \lambda_0^{-\beta}\bar \nu_0^{-1}$ ($\bar \varepsilon_1$ and $\bar \varepsilon_2$):
\begin{align}
\label{rough:defdecomposition} &\bar \varepsilon_0 (z) \ = \ \bar \varepsilon_1 (z)+\bar \varepsilon_2(z) \ = \ \varepsilon_1(z)+ \varepsilon_2(z)+\varepsilon_3(z)+\varepsilon_4(z),\\
\label{rough:defbar} & \bar \varepsilon_1 (z)=\frac{\bar \lambda_0}{\lambda_0} \phi (\mu z)-\phi(z), \quad \bar \varepsilon_2(z)=\bar \lambda_0 \tilde a_0(\bar\lambda_0^\beta \bar \nu _0 z),\\
\label{rough:def0} & \varepsilon_1(z)= \frac{\bar \lambda_0}{\lambda_0}-1+\bar \lambda_0 \tilde a_0(0)+z^{\frac{1}{\beta+1}}(1-\frac{\bar \lambda_0}{\lambda_0}\mu^{\frac{1}{\beta+1}}),  \quad \varepsilon_2(z)=\left(\frac{\bar \lambda_0}{\lambda_0}-1\right)(\phi (\mu z)-1+(\mu z)^{\frac{1}{\beta+1}}),\\
\nonumber &\varepsilon_3(z)=\phi (\mu z)-\phi(z)+(\mu^{\frac{1}{\beta+1}}-1)z^{\frac{1}{\beta+1}}, \quad \varepsilon_4=\bar \lambda_0 (\tilde a_0(z\bar \lambda_0^\beta \bar \nu_0)-\tilde a_0(0)).
\end{align}
In order for the boundary condition \eqref{orthogonality} to be satisfied, using the behavior \eqref{asympt z to 0} of $\phi(z)$ as $z\to 0$, we require that $\varepsilon_1=0$. Using \eqref{rough:def0} this fixes $\bar \lambda_0,\bar \nu_0$ in an unique manner via the identities:
\be \label{rough:bd:parameter}
\bar \lambda_0=(\lambda_0^{-1}+\tilde a_0(0))^{-1}=\lambda_0 (1+O(\kappa \bar \lambda_0)), \qquad \bar \nu_0=\lambda_0^{1+2\beta}\bar \lambda_0^{-1-2\beta}\tilde \nu_0=\tilde \nu_0 (1+O(\kappa \bar \lambda_0))
\ee
where we used \eqref{nonsmooth:id:initialcond}. Injecting \eqref{rough:bd:parameter} in \eqref{rough:defbar}, using \eqref{asympt z to infty} one then obtains that for all $z\geq 1$:
$$
|\bar \varepsilon_1(z)|\lesssim |\frac{\bar \lambda_0}{\lambda_0}-1|\phi(\mu z)+ |\mu z-z|\sup_{\tilde z \in [z,\mu z]}|\phi'(\tilde z)|\lesssim \kappa \bar \lambda_0 z^{-\frac{1}{\beta}}, \quad \mbox{and similarly}\quad |\pa_z \bar\varepsilon_1 (z)|\lesssim \kappa \bar \lambda_0 z^{-\frac{1}{\beta}-1}.
$$
Using \eqref{nonsmooth:id:initialcond} we have for $z\geq 1$ that $|\bar \varepsilon_2(z)|\leq \bar \lambda_0\kappa$ and $|\pa_z \varepsilon_2(z)|\leq \kappa \bar \lambda_0^{1+\beta}\bar \nu_0$. Injecting these two inequalities and the two above in the first decomposition in \eqref{rough:defdecomposition} shows:
\be \label{rough:bd:inter1}
|\bar \varepsilon_0(z)|\lesssim \kappa \bar \lambda_0 \quad \mbox{and} \quad |\pa_z  \bar\varepsilon_0 (z)|\lesssim \kappa \bar \lambda_0 (z^{-1-\frac{1}{\beta}}+ \bar \lambda_0^{\beta}\bar \nu_0) \qquad \mbox{for all }z\in [1,\bar \lambda_0^{-\beta}\bar \nu_0^{-1}]
\ee
Next, using the behavior \eqref{asympt z to 0} of $\phi(z)$ as $z\rightarrow 0$ and then \eqref{rough:bd:parameter} we deduce that for $0<z\leq 1$:
$$
|\pa_z \varepsilon_2 (z)|\lesssim |\frac{\bar \lambda_0}{\lambda_0}-1|z^{\frac{2}{\beta+1}-1} \lesssim \kappa \bar \lambda_0z^{\frac{2}{\beta+1}-1} .
$$
By a similar estimate, $|\pa_z \varepsilon_3 (z)| \lesssim \kappa \bar \lambda_0z^{\frac{2}{\beta+1}-1}$ for $0<z\leq 1$. Using \eqref{nonsmooth:id:initialcond} we obtain $|\pa_z \varepsilon_4|\lesssim \kappa \bar \lambda_0^{1+\beta}\bar \nu_0$. Injecting these inequalities and the above one in the second decomposition and $\varepsilon_1=0$ in \eqref{rough:defdecomposition} shows:
\be \label{rough:bd:inter2}
|\pa_z \bar \varepsilon_0(z)|\lesssim \kappa \bar \lambda_0 z^{\frac{2}{\beta+1}-1}+\kappa \bar \lambda_0^{1+\beta}\bar \nu_0  \qquad \mbox{for all }z\in (0,1] .
\ee
Combining \eqref{rough:bd:inter1} and \eqref{rough:bd:inter2}, using $w=z^\alpha e^{-Kz}$ with $\alpha = \frac{|1-\beta|-2+\frac\eta2}{\beta+1}>-1$ we obtain:
\be \label{rough:bd:inter3}
\int_0^{z^*} w \bar \varepsilon_0^2 dz + \sup\limits_{z^*\leq z\leq \bar\lambda_0^{-\beta} \bar\nu_0^{-1}} \bar \varepsilon_0^2(z) \lesssim \kappa^2 \bar \lambda_0^2 +\kappa^2 \bar \lambda_0^{2+2\beta}\bar \nu_0^2.
\ee
We now check that $a_0$ is initially trapped in the sense of Definition \ref{definition:initial-subcritical} with decomposition parameters $\bar \lambda_0$ and $\bar \nu_0$, and framework parameters $\eta,\tilde K,K,z^*,\gamma,\lambda_0^*,\tilde \nu_0^*$ defined right before Step 1. We set $\bar s_0=\log \bar \lambda_0^{-1}$. The estimates \eqref{rough:id:defparameters} and \eqref{rough:bd:parameter} imply \eqref{bd:parametersini} for $\bar \lambda_0,\bar s_0,\bar \nu_0,\tilde\nu_0^*$, so that item (i) of Definition \ref{definition:initial-subcritical} is indeed satisfied. The fact that $\varepsilon_1=0$ in the second decomposition in \eqref{rough:defdecomposition} and the inequality \eqref{rough:bd:inter2} show that item (ii) of Definition \ref{definition:initial-subcritical} is satisfied. Finally, \eqref{rough:bd:inter3} shows that item (iii) of Definition \ref{definition:initial-subcritical} is also satisfied provided $\kappa$ has been chosen small enough depending only on $\lambda_0^*$ and $\tilde \nu_0^*$. Hence $a_0$ is initially trapped, finishing the proof of the claim.

\end{proof}

\section{The smooth blowup case}\label{sec:smooth}
In this section, we prove Theorem \ref{theorem:critical}. We study the limiting critical case when $\beta=0$, for which
$$
\phi_{\beta=0}(z)=\phi(z)=e^{-z}
$$
(we drop the $\beta$ subscript in this section to ease notation). When $\beta=0$, the vanishing condition \eqref{orthogonality} becomes:
\be \label{smooth:orthogonality}
\left\{ \begin{array}{l l}  \varepsilon(s,z=0)=0 ,\\
 \pa_z \varepsilon(s,z=0)= 0, \end{array} \right.
\ee
and the modulation equations \eqref{equation:modulation-1} and \eqref{equation:modulation-2} become:
\begin{equation} \label{smooth:id:modulation}
    \frac{\lambda_s}{\lambda} +1 = -\frac{\nu_s}{\nu} = 2\nu \int_0^{\frac{1}{\nu}} (\phi + \varepsilon)^2 (z) dz.
\end{equation}
Therefore, one can rewrite \eqref{equation:epsilon-1} and \eqref{equation:epsilon_z} as
\begin{equation}  \label{smooth:id:equationep}
    \begin{split}
        &\varepsilon_s - \frac{\lambda_s}{\lambda}\varepsilon - \frac{\nu_s}{\nu} z \varepsilon_z - 2\phi \varepsilon + \partial_z^{-1} \phi \varepsilon_z + \partial_z^{-1} \varepsilon \phi' 
         - \varepsilon^2 + \partial_z^{-1} \varepsilon \varepsilon_z 
        \\
        =& 2\nu\Big(\int_0^{\frac{1}{\nu}} (\phi+\varepsilon)^2(z)dz\Big)\Big(-1+(z+1)\phi\Big),
    \end{split}
\end{equation}
and
\begin{equation} \label{smooth:id:equationpazep}
\begin{split}
     &\varepsilon_{zs} - ( \phi -1  )  \varepsilon_{z}      + (\partial_z^{-1} \phi - \frac{\nu_s}{\nu} z ) \varepsilon_{zz} - \phi' \varepsilon + \partial_z^{-1} \varepsilon \phi''
     - \varepsilon \varepsilon_z + \partial_z^{-1} \varepsilon \varepsilon_{zz}
    \\
    = & -  z \phi 2\nu \int_0^{\frac{1}{\nu}} (\phi + \varepsilon)^2 (z) dz,
\end{split}
\end{equation}

The proof of the theorem follows the same strategy as that of Theorem \ref{nonsmooth}. It also relies on a bootstrap argument. However, \eqref{smooth:id:modulation} gives $\nu_s =-\nu^2$ to leading order, hence we will have to deal with the slower algebraic decay $\nu\approx s^{-1}$ in comparison with the exponential decays involved in the proof of Theorem \ref{nonsmooth}. We first need to adjust Definition \ref{definition:initial-subcritical} and Definition \ref{definition:trap-subcritical}. 

We consider the weight:
\be \label{smooth:def:w}
w(z) = z^{-2}.
\ee
Here, since $\phi(z)=e^{-z}$, explicit computations to control nonlocal terms will avoid the use of a $e^{-Kz}$ factor in the weight. 

\begin{definition}[Initial closeness]\label{def:ini crit-beta=0}
Let $\lambda_0^*>0$ and $\gamma>0$. We say that $a_0$ is initially close to the blow-up profile if there exists $\lambda_0>0$ and $\nu_0>0$ such that the decomposition \eqref{decomp} satisfies:
\begin{itemize}
\item[(i)] \emph{Initial values of the modulation parameters} (note that this fixes the value of $s_0$):
\begin{eqnarray}
\label{bd:parametersini-beta=0}
 \lambda_0 = s_0e^{-s_0}, \ \  \frac{1}{2s_0}\leq \nu_0 \leq  \frac2{s_0}.
\end{eqnarray} 
\item[(ii)] \emph{Compatibility condition for the initial perturbation}. $\varepsilon_0\in C^2 ([0,\frac1{\nu_0}))$ satisfies the boundary conditions \eqref{smooth:orthogonality} and the integral condition \eqref{equation:epsilon-bc}.
\item[(iii)] \emph{Initial smallness of the remainder in the self-similar variables}. For some small number $\gamma > 0$, with $w$ given by \eqref{smooth:def:w}:
\begin{eqnarray}\label{bd:eini-beta=0}
\mathcal E_1^2(s_0) = \int_0^{z^*} w\varepsilon_0^2\;dz  < \gamma^2 s_0^{-\frac43},%e^{-\frac{4\beta s_0}{3}}, 
\ \ \mathcal E_2^2(s_0) =\sup_{z^*\leq z\leq\frac1{\nu_0}}|\varepsilon_0|^2 < \frac14s_0^{-\frac43}.%\gamma e^{-\frac{4\beta s_0}3}.
\end{eqnarray} 
\end{itemize}
\end{definition}

\begin{definition}[Trapped solutions] \label{def:trap crit}

Let $s_0^*\geq 0$, $z^*\geq 1$ and $\gamma>0$. We say that a solution $a(s,z)$ is trapped on $[s_0,s_1]$ with $s_0^*\leq s_0<s_1\leq \infty$, if it satisfies the properties of Definition \ref{def:ini crit-beta=0} at time $s_0$ and if for all $s\in [s_0,s_1]$, $a(s,z)$ can be decomposed as in \eqref{decomp} with:
\begin{itemize}
\item[(i)] \emph{Values of the modulation parameters:}
\begin{eqnarray}\label{bd:parameterstrap-critical}
\frac 1{4} se^{-s}< \lambda < 4 se^{-s}, \ \ \frac 1{4s} %e^{-\beta s}
<\nu < \frac{4}s.%e^{-\beta s}.
\end{eqnarray} 
\item[(ii)] \emph{Decay in time of the remainder in the self-similar variables:}

\be \label{smooth:bd:etrap}
\mathcal E_1^2(s) = \int_0^{z^*}w\varepsilon_z^2\;dz  < s^{-\frac43}, \qquad \mathcal E_2^2(s) = \sup_{z^*\leq z\leq\frac1{\nu(s)}}|\varepsilon|^2 < s^{-\frac43}.%\tilde K^2e^{-\frac{4\beta s}3}.
\ee

\end{itemize}

\end{definition}

\begin{remark}
 One could show that the decay rate for $\mathcal E_1$ and $\mathcal E_2$ is $s^{-1+\eta}$ for any $0<\eta<1$. Here for simplicity we take $s^{-\frac23}$ as an example. 
\end{remark}

The heart of our analysis, as in the case $\beta>0$, is to show that a solution that is initially trapped will remain globally trapped in time self-similar time $s$.

\begin{proposition} \label{smooth:pr:bootstrap}

There exist universal constants $z^*\geq 1$, $\gamma>0$ and $s_0^*\geq 0$ such that the following holds true. For all $s_0\geq s_0^*$, any solution of \eqref{1D-model-intro} which is initially close to the blow-up profile in the sense of Definition \ref{def:ini crit-beta=0} is trapped on $[s_0,+\infty)$ in the sense of Definition \ref{def:trap crit}.

\end{proposition}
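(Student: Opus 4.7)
The plan is to prove Proposition \ref{smooth:pr:bootstrap} by a bootstrap/continuity argument strictly parallel to Proposition \ref{pr:bootstrap}. Set
\[
s^* := \sup\{s_1 \geq s_0 : a \text{ is trapped on } [s_0, s_1] \text{ in the sense of Definition \ref{def:trap crit}}\},
\]
and suppose toward contradiction $s^* < \infty$. The proof reduces to three analogues of Lemmas \ref{lemma:modulation}--\ref{lemma:main} that, under the bootstrap bounds \eqref{bd:parameterstrap-critical}--\eqref{smooth:bd:etrap}, yield strict improvements at $s^*$; a continuity argument then contradicts the maximality of $s^*$. The universal constants would be selected in the order $z^* \geq 1$ (large enough for the interior spectral gap and the exterior supersolution), then $\gamma > 0$ small, then $s_0^*$ large.

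The modulation lemma is the most delicate step. Using the pointwise control $|\varepsilon(z)| \leq \mathcal E_1 z^{3/2}/\sqrt{3}$ on $[0, z^*]$ (from $\varepsilon(s,0)=0$, the weight \eqref{smooth:def:w}, and Cauchy--Schwarz) together with \eqref{smooth:bd:etrap} on $[z^*, 1/\nu]$, I would expand
\[
2\nu \int_0^{1/\nu}(\phi + \varepsilon)^2 \, dz = \nu + O(s^{-5/3}),
\]
exploiting $\int_0^\infty e^{-2z} dz = 1/2$ and absorbing the tail $\int_{1/\nu}^\infty e^{-2z} dz = O(\nu^2)$. Plugging into \eqref{smooth:id:modulation} yields $\nu_s = -\nu^2 + O(\nu \cdot s^{-5/3})$, so setting $\mu := 1/\nu$ gives $\mu_s = 1 + O(s^{-2/3})$. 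Integration produces $\mu(s) = s(1 + o(1))$, i.e.\ $\nu(s) = s^{-1}(1 + o(1))$, refining \eqref{bd:parameterstrap-critical} to $\nu \in [1/(2s), 2/s]$. For $\lambda$ one integrates $(\ln \lambda)_s = -1 + \nu + O(s^{-5/3})$ using $\int_{s_0}^s \nu \, d\tilde s = \ln(s/s_0) + o(1)$, obtaining $\lambda = \tilde\lambda_\infty s e^{-s}(1 + O(s^{-2/3}))$, which strictly improves the remaining bound.

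For the interior estimate I would multiply \eqref{smooth:id:equationpazep} by $z^{-2}\varepsilon_z$ and integrate over $[0, z^*]$. Integration by parts on the transport term, combined with the potential $-(\phi - 1) = 1 - e^{-z}$ and the modulation $-\nu_s/\nu = \nu + o(\nu)$, produces a coercive contribution whose coefficient of $z^{-2}\varepsilon_z^2$ is $z^{-1}(1 - e^{-z}) + 1 - \tfrac{3}{2}e^{-z} + \tfrac{\nu}{2}$, a quantity bounded below by a universal $\kappa_0 > 0$ on $[0, \infty)$ (its value at $z = 0^+$ is $1/2$); the outgoing-transport boundary flux at $z = z^*$ is nonnegative as in \eqref{particle-moving}. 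The nonlocal terms $\phi'\varepsilon$ and $\partial_z^{-1}\varepsilon \, \phi''$ are controlled by Cauchy--Schwarz using $|\varepsilon| \lesssim \mathcal E_1 z^{3/2}$ and $|\partial_z^{-1}\varepsilon| \lesssim \mathcal E_1 z^{5/2}$, the $e^{-z}$ factor in $\phi',\phi''$ keeping the resulting constant below $\kappa_0/4$ for suitable $z^*$. The source term $-z e^{-z} \cdot 2\nu\int_0^{1/\nu}(\phi + \varepsilon)^2$ has $O(s^{-1})$ prefactor, so Young's inequality contributes $(\kappa_0/4)\mathcal E_1^2 + C s^{-2}$. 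Assembling, $\tfrac{d}{ds}\mathcal E_1^2 + \tfrac{\kappa_0}{2}\mathcal E_1^2 \leq C s^{-2}$, which integrates to $\mathcal E_1^2 \lesssim s^{-2} \ll s^{-4/3}$, strictly improving \eqref{smooth:bd:etrap}. The exterior estimate then mimics Lemma \ref{lemma:main}: rewrite \eqref{smooth:id:equationep} as $\varepsilon_s + \mathcal L \varepsilon = F$, take $f(s) := c\,s^{-2/3}$ as supersolution (using $-\lambda_s/\lambda = 1 - \nu + o(\nu)$ to get $(\partial_s + \mathcal L)f \geq \tfrac{1}{2}f$ for $z \geq z^*$ and $s_0$ large), and verify $|F| \lesssim s^{-1} = o(s^{-2/3})$ on $[z^*, 1/\nu]$; a maximum principle applied to $f^\pm := \pm(cs^{-2/3} \mp \varepsilon)$, with boundary data controlled via the interior bound at $z = z^*$ and via the characteristic identity \eqref{partical-stay} at $z = 1/\nu(s)$, closes the argument.

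The hardest part is the criticality of the quadratic modulation balance. Unlike in Section \ref{sec:nonsmooth}, where all errors decay exponentially faster than the leading self-similar dynamics, here the correction $C_0 \nu$ in $\lambda_s/\lambda + 1$ must be tracked to obtain the $1/s$ law for $\nu$, yet the higher-order corrections decay only polynomially faster than $\nu$ itself; the self-consistency between modulation precision and remainder decay thus needs careful coordination. Moreover, the polynomial weight $w(z) = z^{-2}$ has no free large parameter analogous to $K$ in \eqref{id:defw}, so the interior spectral gap must \emph{naturally} dominate the nonlocal perturbations through the explicit decay of $\phi = e^{-z}$ and its derivatives---this is the ``slightly more refined analysis'' announced in Section \ref{sec:organization}.
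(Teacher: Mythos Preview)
Your proposal is correct and follows essentially the same three-lemma structure (modulation, interior energy, exterior maximum principle) that the paper uses in Lemmas \ref{lemma:smoothmodulation}, \ref{smooth:lemma:interior}, and \ref{smooth:lemma:main-critical}. Two minor quantitative slips are worth noting: first, the error in $2\nu\int_0^{1/\nu}(\phi+\varepsilon)^2\,dz$ is $O(s^{-4/3})$ rather than $O(s^{-5/3})$, since $2\nu\int_0^{1/\nu}\varepsilon^2 \leq 2\|\varepsilon\|_{L^\infty}^2 \lesssim s^{-4/3}$ (this still yields $\mu_s = 1 + O(s^{-1/3})$, which integrates to the same conclusion); second, in the exterior estimate the nonlocal term $\partial_z^{-1}\varepsilon\,\phi'$ contributes $O(z e^{-z}\,s^{-2/3})$ to $F$, not $O(s^{-1})$, so closing the maximum principle genuinely requires choosing $z^*$ large and then $\gamma$ small, as the paper does. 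Your final paragraph correctly identifies the key subtlety: with $w=z^{-2}$ there is no free large parameter, and the paper indeed verifies numerically that the nonlocal constants $\sqrt{1/3}+\sqrt{2}/5$ fit strictly under the spectral gap $1$, leaving a coercivity of at least $1/8$.
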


The proof of Proposition \ref{smooth:pr:bootstrap} necessitates several lemmas that improve strictly all \textit{a priori} estimates of Definition \ref{def:trap crit}.

\begin{lemma}

For any $z^*\geq 1$, for $s_0^*$ large enough, if $a$ is trapped on $[s_0,s_1]$ then for all $s_0\leq s \leq s_1$:
\be \label{smooth:bd:varepsilonLinfty}
\| \varepsilon (s) \|_{L^\infty([0,\nu^{-1}])} \leq C(z^*) s^{-\frac 23}.
\ee
and
\be \label{smooth:bd:integral}
\nu  \int_0^{\frac{1}{\nu}} (\phi + \varepsilon)^2 (z) dz  \leq 4 s^{-1}.
\ee

\end{lemma}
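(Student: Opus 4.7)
The plan is to prove the two bounds in order, with \eqref{smooth:bd:varepsilonLinfty} feeding directly into \eqref{smooth:bd:integral}. Both are soft consequences of the trapped bounds \eqref{smooth:bd:etrap} combined with the vanishing boundary conditions \eqref{smooth:orthogonality} and the explicit profile $\phi(z)=e^{-z}$; no subtle energy argument is required at this level.

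For \eqref{smooth:bd:varepsilonLinfty}, I would split $[0,1/\nu]=[0,z^*]\cup[z^*,1/\nu]$. On the exterior piece the trapped bound \eqref{smooth:bd:etrap} gives immediately $|\varepsilon(s,z)|\leq\mathcal E_2(s)\leq s^{-2/3}$. On the interior piece I use $\varepsilon(s,0)=0$ from \eqref{smooth:orthogonality}, write $\varepsilon(s,z)=\int_0^z\varepsilon_z(s,\tilde z)\,d\tilde z$, and apply Cauchy--Schwarz against the weight $w(\tilde z)=\tilde z^{-2}$:
\begin{equation*}
|\varepsilon(s,z)|^2 \;\leq\; \Big(\int_0^z w(\tilde z)\,\varepsilon_z^2\,d\tilde z\Big)\Big(\int_0^z \tilde z^2\,d\tilde z\Big)\;\leq\; \mathcal E_1^2(s)\,\frac{z^3}{3}\;\leq\; \frac{(z^*)^3}{3}\,s^{-4/3}.
\end{equation*}
The crucial point is that with $w(z)=z^{-2}$ the reciprocal weight $1/w=z^2$ is now integrable at the origin: this is precisely the gain coming from the second vanishing $\varepsilon_z(s,0)=0$ in the smooth case, which is what makes the critical analysis go through with this stronger weight.

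For \eqref{smooth:bd:integral}, I would expand $(\phi+\varepsilon)^2=\phi^2+2\phi\varepsilon+\varepsilon^2$ and estimate the three pieces separately. The leading piece is controlled exactly by the explicit exponential profile and the upper bound on $\nu$ from \eqref{bd:parameterstrap-critical}:
\begin{equation*}
\nu\int_0^{1/\nu}\phi^2\,dz\;\leq\;\nu\int_0^\infty e^{-2z}\,dz\;=\;\frac{\nu}{2}\;<\;\frac{2}{s}.
\end{equation*}
The cross term is estimated by pulling $\varepsilon$ out in $L^\infty$ using \eqref{smooth:bd:varepsilonLinfty} and $\int_0^\infty e^{-z}dz=1$, giving $\nu\int 2|\phi\varepsilon|\leq 2 C(z^*)s^{-2/3}\cdot\nu \leq C(z^*)s^{-5/3}$; similarly $\nu\int_0^{1/\nu}\varepsilon^2\leq\|\varepsilon\|_{L^\infty}^2\leq C(z^*)s^{-4/3}$. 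Summing gives $\nu\int_0^{1/\nu}(\phi+\varepsilon)^2\,dz\leq 2/s+C(z^*)(s^{-5/3}+s^{-4/3})$, which is strictly below $4/s$ as soon as $s_0^*$ is taken large enough so that $C(z^*)(s_0^{-2/3}+s_0^{-1/3})\leq 2$.

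There is no real obstacle in this lemma: the generous constants $C(z^*)$ in \eqref{smooth:bd:varepsilonLinfty} and $4$ in \eqref{smooth:bd:integral} leave ample room, and the argument is purely a Cauchy--Schwarz plus Gaussian-tail computation. The only point requiring a forward-looking choice is the weight $w(z)=z^{-2}$: it has to be checked later that this power is simultaneously compatible with the spectral-gap/transport coercivity needed for the interior energy estimate on $\mathcal E_1$ and with the nonlocal source terms in \eqref{smooth:id:equationpazep}. For the present lemma, however, it is simply the natural weight dictated by the double vanishing \eqref{smooth:orthogonality}.
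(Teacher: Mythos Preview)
Your proof is correct and follows essentially the same route as the paper: split $[0,\nu^{-1}]$ at $z^*$, use Cauchy--Schwarz with $w(z)=z^{-2}$ and the vanishing $\varepsilon(s,0)=0$ on the interior piece, then expand $(\phi+\varepsilon)^2$ and bound each term using $\int_0^\infty e^{-2z}=\tfrac12$, the $L^\infty$ bound just obtained, and $\nu<4/s$. The only cosmetic difference is that the paper first bounds $\int_0^{1/\nu}(\phi+\varepsilon)^2\leq 1$ and then multiplies by $\nu$, whereas you carry the factor $\nu$ through each term; the arithmetic is equivalent.
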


\begin{proof}

From the vanishing boundary condition, Cauchy-Schwarz, \eqref{smooth:def:w} and \eqref{smooth:bd:etrap}, for $0<z\leq z^*$:
\be \label{smooth:bd:varepsiloninterior}
|\varepsilon(z)|=|\int_0^z \pa_z \varepsilon d\tilde z|\leq \mathcal E_1 \sqrt{\int_0^z z^2 d\tilde z}\lesssim s^{-\frac23} z^{\frac 32}.
\ee
This, combined with the second inequality in \eqref{smooth:bd:etrap}, shows \eqref{smooth:bd:varepsilonLinfty}. Then, since $\phi=e^{-z}$ we estimate:
\begin{align*}
& \int_0^{\frac{1}{\nu}} (\phi + \varepsilon)^2 (z) dz  \leq \int_0^\infty \phi^2 dz +2\int_0^{\nu^{-1}}\phi \varepsilon +\int_0^{\nu^{-1}}\varepsilon^2dz \\
&\qquad \qquad \leq \frac{1}{2}+2\| \varepsilon\|_{L^\infty([0,\nu^{-1})}+\nu^{-1}\| \varepsilon\|_{L^\infty([0,\nu^{-1})}^2 \leq \frac{1}{2}+O(s^{-1/3})\leq 1
\end{align*}
for $s_0^*$ large enough, where we used \eqref{smooth:bd:varepsilonLinfty}. The above inequality and \eqref{bd:parameterstrap-critical} show \eqref{smooth:bd:integral}.

\end{proof}

\begin{lemma}[Modulation Equations] \label{lemma:smoothmodulation}
For any $z^*\geq 1$ and $ \gamma>0$, there exists a large self-similar time $s_0^*$ such that for any $s_0\geq s_0^*$, for any solution which is trapped on $[s_0,s_1]$, we have for $s\in [s_0,s_1]$:
\begin{eqnarray}\label{smoothmodulationequations}
\left| \frac{\lambda_s}{\lambda}+1\right|\leq C s^{-1}, \qquad \left| \frac{\nu_s}{\nu} \right|\leq C s^{-1},
\end{eqnarray}
for $C>0$ independent of the bootstrap constants, and
\begin{eqnarray}\label{smooth:bd:boostrap improved parameters}
 \frac 12 se^{-s}\leq \lambda \leq \frac 32 se^{-s}, \qquad \frac{1}{3s} \leq \nu \leq\frac{3}{s}.
\end{eqnarray}
Moreover, if $s_1=\infty$ then there exists a constant $\tilde \lambda_\infty>0$ such that
\begin{eqnarray}\label{smooth:bd:boostrap improved parameters2}
\lambda=\tilde \lambda_\infty  s e^{-s}\big(1+ O(s^{-\frac 13}) \big) ,\quad
\nu=\frac{1}{s}+O(s^{-\frac 43}).
\end{eqnarray}
\end{lemma}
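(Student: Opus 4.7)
The plan is to integrate the modulation system \eqref{smooth:id:modulation} after pinning down the right-hand side $\nu I$, where $I(s) := 2\int_0^{1/\nu}(\phi + \varepsilon)^2\,dz$, to the leading order $I \approx 1$. Splitting
\[
I(s) = 2\int_0^\infty \phi^2 \,dz - 2\int_{1/\nu}^\infty \phi^2\, dz + 4\int_0^{1/\nu}\phi\varepsilon \,dz + 2\int_0^{1/\nu}\varepsilon^2 \,dz,
\]
and using $\phi=e^{-z}$, the first term equals $1$, the second is exponentially small (since $1/\nu \gtrsim s$), the third is $O(\mathcal{E}_1)=O(s^{-2/3})$ by combining the Cauchy–Schwarz pointwise bound $|\varepsilon(z)|\leq C\mathcal{E}_1 z^{3/2}$ from \eqref{smooth:bd:varepsiloninterior} with the exponential decay of $\phi$, and the fourth is bounded by $\nu^{-1}\|\varepsilon\|_\infty^2 \lesssim s^{-1/3}$ using \eqref{smooth:bd:varepsilonLinfty}. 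Hence $I(s)=1+O(s^{-1/3})$. The crude bound $\nu I\leq 4/s$ from \eqref{smooth:bd:integral} then yields \eqref{smoothmodulationequations} directly.

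For the improved bound on $\nu$, one uses $(1/\nu)_s = I(s) = 1+O(s^{-1/3})$, so integration yields $1/\nu(s)=s+c_0+O(s^{2/3})$ with $c_0 := 1/\nu_0-s_0 \in [-s_0/2,s_0]$ by \eqref{bd:parametersini-beta=0}. Choosing $s_0^*$ large enough that $|c_0|+Cs^{2/3}\leq 2s/3$ for $s\geq s_0^*$, one obtains $s/3\leq 1/\nu(s)\leq 3s$, the improved bound in \eqref{smooth:bd:boostrap improved parameters}.

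For the $\lambda$ equation, set $\Lambda(s):=\lambda(s)e^s/s$ so that $(\log\Lambda)_s=\nu I-1/s$. Using $\nu=1/(s+c_0)+O(s^{-4/3})$ from the previous step and $I=1+O(s^{-1/3})$, the key identity is $\nu I-1/s = -c_0/(s(s+c_0))+O(s^{-4/3})$. The partial-fraction decomposition $-c_0/(\tilde s(\tilde s+c_0)) = 1/(\tilde s+c_0)-1/\tilde s$ integrates explicitly, and using $\Lambda(s_0)=1$ gives
\[
\Lambda(s) = \frac{s_0(s+c_0)}{s(s_0+c_0)}\bigl(1+O(s_0^{-1/3})\bigr).
\]
This expression is monotone in $s$ and interpolates between $\Lambda(s_0)=1$ and $s_0/(s_0+c_0)$ as $s\to\infty$; for $s_0^*$ large enough, it remains within the range $[1/2,3/2]$ required by \eqref{smooth:bd:boostrap improved parameters} (up to a slight restriction of $c_0$). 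When $s_1=\infty$, sending $s\to\infty$ in the above formula gives the limit $\tilde\lambda_\infty = (s_0/(s_0+c_0))(1+O(s_0^{-1/3}))$, and evaluating the tail integral $\int_s^\infty O(\tilde s^{-4/3})\,d\tilde s = O(s^{-1/3})$ produces $|\Lambda(s)-\tilde\lambda_\infty|=O(\tilde\lambda_\infty s^{-1/3})$; similarly $\nu(s)-1/s=-c_0/(s(s+c_0))+O(s^{-4/3})=O(s^{-4/3})$, proving \eqref{smooth:bd:boostrap improved parameters2}.

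The main obstacle is that the crude trapped bound $\nu\in[1/(4s),4/s]$ from \eqref{bd:parameterstrap-critical} only gives $\int_{s_0}^s\nu\,d\tilde s \lesssim \log(s/s_0)$, which is not integrable at infinity and would let $\log\Lambda$ drift logarithmically, destroying both the finite-time bound and the existence of $\tilde\lambda_\infty$. Overcoming this requires first sharpening $\nu$ asymptotically to $1/(s+c_0)$ within an integrable $O(s^{-4/3})$ error, and then exploiting the exact cancellation in the partial-fraction identity to convert the would-be logarithmic growth into a bounded, convergent integral, yielding both the $[1/2,3/2]$ confinement for all $s\geq s_0$ and the sharp $O(s^{-1/3})$ rate of convergence as $s\to\infty$.
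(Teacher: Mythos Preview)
Your argument is correct and follows the paper's strategy for the preliminary estimate $I=1+O(s^{-1/3})$ and for the integration of $(1/\nu)_s$. The two proofs are essentially identical through these steps.

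Where you depart is in the treatment of $\lambda$. The paper simply asserts $\nu=\tfrac1s(1+O(s^{-1/3}))$ and concludes $\tfrac{\lambda_s}{\lambda}+1-\tfrac1s=O(s^{-4/3})$, then integrates to get $\Lambda(s)=1+O(s_0^{-1/3})$. As you correctly identify in your ``main obstacle'' paragraph, this step is suspect: the constant $c_0=1/\nu_0-s_0$ is only controlled by $|c_0|\le s_0$ under \eqref{bd:parametersini-beta=0}, so the error $c_0/s$ in $\nu^{-1}$ is \emph{not} $O(s^{-1/3})$ uniformly in $s_0$, and the paper's direct integration to $\Lambda=1+O(s_0^{-1/3})$ does not follow as written. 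Your partial-fraction device $\tfrac{-c_0}{\tilde s(\tilde s+c_0)}=\tfrac1{\tilde s+c_0}-\tfrac1{\tilde s}$ is exactly what is needed to convert this non-integrable-looking term into a bounded telescoping logarithm, and it yields the correct formula $\Lambda(s)=\tfrac{s_0(s+c_0)}{s(s_0+c_0)}(1+O(s_0^{-1/3}))$. This is a genuine repair of the paper's argument.

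One minor point you already flag: with $c_0\in[-s_0/2,s_0]$ your formula gives $\Lambda(s)\to s_0/(s_0+c_0)\in[1/2,2]$, so the upper bound $\lambda\le\tfrac32 se^{-s}$ in \eqref{smooth:bd:boostrap improved parameters} does not hold at the endpoint $c_0=-s_0/2$; the achievable range is $[1/2,2]$ up to $O(s_0^{-1/3})$. This is still strictly inside the bootstrap window $[1/4,4]$ from \eqref{bd:parameterstrap-critical}, so Proposition~\ref{smooth:pr:bootstrap} closes regardless---but the constant $3/2$ in the lemma as stated should really be $2+o(1)$ (or the initial window for $\nu_0$ tightened). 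Your parenthetical ``up to a slight restriction of $c_0$'' is an honest acknowledgement of this; it would be cleaner to simply state the range you actually prove.
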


\begin{proof}

\textbf{Step 1}. \emph{A preliminary estimate}. We claim that:
\be \label{smooth:modulation:inter4}
2\int_0^{\frac{1}{\nu}} (\phi + \varepsilon)^2 (z) dz =1+O(C(z^*)s^{-\frac 13}).
\ee
Indeed, as $\phi(z)=e^{-z}$, we have $2\int_0^\infty \phi^2=1$. Hence, using \eqref{bd:parameterstrap-critical} and \eqref{smooth:bd:varepsilonLinfty}:
\begin{align*}
2\int_0^{\frac{1}{\nu}} (\phi + \varepsilon)^2 (z) dz&= 1+2 \int_{\nu^{-1}}^\infty \phi^2 (z)dz+4 \int_0^{\frac{1}{\nu}} \phi  \varepsilon (z) dz +2\int_0^{\nu^{-1}}  \varepsilon^2 dz \\
&= \ 1+O( e^{-2\nu^{-1}})+O(\| \varepsilon \|_{L^\infty([0,\nu^{-1})]})+O(\nu^{-1}\| \varepsilon \|_{L^\infty([0,\nu^{-1})]}^2) \\
 &= \ 1  +O(C(z^*)s^{-\frac 13}).
\end{align*}

\noindent \textbf{Step 2}. \emph{Equation for $\nu$}. Injecting \eqref{smooth:modulation:inter4} in \eqref{smooth:id:modulation} gives:
\be \label{smooth:modulation:inter2}
-\frac{\nu_s}{\nu^2} = \ 1  +O(C(z^*)s^{-\frac 13}).
\ee
Multiplying \eqref{smooth:modulation:inter2} by $\nu$ and using \eqref{bd:parameterstrap-critical} shows the second inequality in \eqref{smoothmodulationequations}. Integrating \eqref{smooth:modulation:inter2} with time, we find:
\be \label{smooth:modulation:inter3}
\frac{1}{\nu}=\frac{1}{\nu_0}+s-s_0+O(C(z^*)(s^{\frac 23}-s_0^{\frac 23}))=\frac{1}{\nu_0}+s-s_0+O(C(z^*)s^{-\frac 13}(s-s_0)).
\ee
Therefore, since $\nu_0^{-1}\leq 2s_0$ from \eqref{bd:parametersini-beta=0} we infer for $s_0^*$ large enough depending on $z^*$:
\be \label{smooth:modulation:inter1}
\frac{1}{\nu}\leq 2s_0+s-s_0+O(C(z^*)s^{-\frac 13}(s-s_0))= s+s_0+O(C(z^*)s^{-\frac 13}(s-s_0))\leq 3s.
\ee
One finds similarly using $s_0/2\leq \nu_0^{-1}$ from \eqref{bd:parametersini-beta=0} that $\frac{1}{\nu}\geq s/3$. This and \eqref{smooth:modulation:inter1} imply the second inequality in \eqref{smooth:bd:boostrap improved parameters}. Finally, if $s_1=\infty$ then \eqref{smooth:modulation:inter3} implies $\nu^{-1}=s+O(s^{2/3})$ and the second inequality in \eqref{smooth:bd:boostrap improved parameters2} follows.\\

\noindent \textbf{Step 3}. \emph{Equation for $\lambda$}. Injecting \eqref{smooth:modulation:inter4} in \eqref{smooth:id:modulation} one finds:
\begin{equation*}
    \frac{\lambda_s}{\lambda}+1 = \frac1s (1+O(s^{-\frac13})) (1+O(s^{-\frac13})) = \frac1s + O(s^{-\frac43}).
\end{equation*}
This implies the first inequality in \eqref{smoothmodulationequations}. Since $\lambda = O(se^{-s})$ from \eqref{bd:parameterstrap-critical}, one has
\begin{equation*}
    \frac{d}{ds}(\frac{e^s\lambda}{s}) = O(s^{-\frac43}).
\end{equation*}
We integrate with time the above equation using $\lambda_0 = s_0 e^{-s_0}$ and find
$$
\lambda(s) = se^{-s}(1+\int_{s_0}^sO(s^{-\frac43})ds)=se^{-s}(1+O(s_0^{-\frac13}))
$$
This implies the first inequality in \eqref{smooth:bd:boostrap improved parameters} for $s_0$ large enough. If $s_1=\infty$ then we set $\tilde\lambda_\infty=1+\int_{s_0}^\infty O(s^{-\frac43})ds$ and rewrite the above equality as:
$$
\lambda(s) = se^{-s}(1+\int_{s_0}^\infty O(s^{-\frac43})ds-\int_{s}^\infty O(s^{-\frac43})ds )= se^{-s}(\lambda_\infty+O(s^{-\frac 13}) ).
$$
This is the first inequality in \eqref{smooth:bd:boostrap improved parameters2}.

\end{proof}

\begin{lemma}[Interior Estimate]\label{smooth:lemma:interior}

For any $z^*\geq 1$ and $ \gamma>0$, there exists a large self-similar time $s_0^*$ such that for any $s_0\geq s_0^*$, for any solution which is trapped on $[s_0,s_1]$, we have for $s\in [s_0,s_1]$:
\begin{eqnarray}\label{smooth:bd:int e1}
\mathcal E_1^2(s)  \leq 2\gamma^2 s^{-\frac43}.
\end{eqnarray}

\end{lemma}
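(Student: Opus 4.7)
The plan is to follow the same three-step scheme as in Lemma \ref{lemma:interior} but adapted to the critical weight $w(z)=z^{-2}$ and to $\phi(z)=e^{-z}$. First I would multiply \eqref{smooth:id:equationpazep} by $w\varepsilon_z$ and integrate over $[0,z^*]$. Integration by parts on the transport term $(\partial_z^{-1}\phi-(\nu_s/\nu)z+\partial_z^{-1}\varepsilon)\varepsilon_{zz}\, w\varepsilon_z$ produces a boundary term at $z=z^*$ that is non-negative (the transport field is outgoing there, as in \eqref{particle-moving}; for $\beta=0$ it is even stronger since $\int_0^{z^*}(\phi+\varepsilon)\sim 1-e^{-z^*}+O(s^{-2/3})$ while $\frac{\nu_s}{\nu}z^*=O(s^{-1}z^*)$), together with an interior coefficient in front of $w\varepsilon_z^2$ equal, after using the modulation equations \eqref{smoothmodulationequations}, to
\[
g(z)\;=\;2-3e^{-z}+\frac{2(1-e^{-z})}{z}+O(s^{-1})+O(\|\varepsilon\|_{L^\infty}).
\]
A direct Taylor expansion gives $g(z)=1+2z+O(z^2)$ near $0$ and $g(z)\to 2$ as $z\to\infty$, so $\inf_{z\ge 0}g\ge 1$ with equality only at the origin. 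This yields the basic coercivity $\int_0^{z^*} g\, w\,\varepsilon_z^2\,dz\ge (1+o(1))\mathcal E_1^2$.

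Next I would handle the remaining linear (nonlocal) contributions $2\int\phi'\varepsilon\, w\varepsilon_z\,dz$ and $2\int \partial_z^{-1}\varepsilon\,\phi''\,w\varepsilon_z\,dz$. Using the boundary conditions \eqref{smooth:orthogonality} together with Cauchy--Schwarz and the pointwise bounds $\varepsilon(z)\le \mathcal E_1\,z^{3/2}/\sqrt 3$ and $\partial_z^{-1}\varepsilon(z)\lesssim \mathcal E_1\,z^{5/2}$ (which come from $1/w=z^2$), the integrands become $\lesssim \mathcal E_1^2\, z\,e^{-2z}$ and $\lesssim \mathcal E_1^2\,z^3 e^{-2z}$ respectively, both integrable on $[0,\infty)$. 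An explicit evaluation shows the total nonlocal contribution is bounded by $c_1\mathcal E_1^2$ with $c_1<1$, leaving a strictly positive net coercivity constant $c_0:=1-c_1-O(s^{-1})>0$. For the source term I would use \eqref{smooth:bd:integral} and Cauchy--Schwarz: since $w z^2\phi^2=e^{-2z}\in L^1$, one gets $\bigl|2\nu\!\int(\phi+\varepsilon)^2\!\int_0^{z^*}\!z\phi\,w\varepsilon_z\bigr|\lesssim s^{-1}\mathcal E_1$, and by Young's inequality this is absorbed as $\frac{c_0}{2}\mathcal E_1^2+C s^{-2}$.

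Combining these estimates yields the differential inequality
\[
\frac{d}{ds}\mathcal E_1^2 + \frac{c_0}{2}\,\mathcal E_1^2 \;\le\; C\,s^{-2}\qquad\text{on }[s_0,s_1].
\]
Multiplying by $e^{(c_0/2)s}$ and integrating from $s_0$ to $s$, together with the initial bound $\mathcal E_1^2(s_0)<\gamma^2 s_0^{-4/3}$ from \eqref{bd:eini-beta=0}, gives
\[
\mathcal E_1^2(s)\;\le\; e^{-\frac{c_0}{2}(s-s_0)}\gamma^2 s_0^{-4/3} \;+\; \frac{2C}{c_0}\,s^{-2}.
\]
Since $(4/3)\log(s/s_0)\le (4/3)(s-s_0)/s_0\le (c_0/2)(s-s_0)$ for $s_0\ge s_0^*$ large enough, the first term is bounded by $\gamma^2 s^{-4/3}$. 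The second is bounded by $\gamma^2 s^{-4/3}$ once $s\ge s_0^*$ satisfies $s^{2/3}\ge 2C/(c_0\gamma^2)$. Adding the two contributions gives \eqref{smooth:bd:int e1}.

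The main obstacle is Step 2: unlike the non-smooth case, the weight $w=z^{-2}$ carries no free large parameter $K$ that would make the nonlocal constants $c_1$ small at will. The argument therefore relies on a quantitative comparison between the coercivity constant $\inf g=1$ and the exact integrals $\int z\,e^{-2z}$ and $\int z^3 e^{-2z}$ produced by the Gaussian-type decay of $\phi$; this is the "slightly more refined analysis" referred to in Section \ref{sec:organization}, and it is what forces the precise choice $w=z^{-2}$.
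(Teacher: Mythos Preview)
Your proposal is correct and follows essentially the same approach as the paper: multiply \eqref{smooth:id:equationpazep} by $w\varepsilon_z$ with $w=z^{-2}$, extract the coercivity $g(z)\ge 1$ from the transport/potential terms (the paper phrases this as the undoubled bound $\ge\tfrac12$ via the elementary inequality $1-e^{-z}\ge ze^{-z}$), bound the nonlocal terms explicitly by $c_1\mathcal E_1^2$ with $c_1=\sqrt{1/3}+\sqrt 2/5<1$, and absorb the source by Young. The only cosmetic difference is the time integration: the paper multiplies by $s^2 e^{s/16}$ before integrating, whereas you use the raw integrating factor $e^{(c_0/2)s}$ and then compare $(4/3)\log(s/s_0)$ with $(c_0/2)(s-s_0)$; both arguments are equivalent for $s_0$ large. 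One small imprecision: your lower-order correction in $g$ should be $O(s^{-2/3})$ (coming from $\|\varepsilon\|_{L^\infty}$ via \eqref{smooth:bd:varepsilonLinfty}) rather than $O(s^{-1})$, but this does not affect the argument.
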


\begin{proof}

Recall \eqref{smooth:def:w}. Multiplying \eqref{smooth:id:equationpazep} by $w\varepsilon_z$ and integrating over $[0,z^*]$, one obtains that
\begin{equation} \label{smooth:interior:id:expression2}
\begin{split}
    &\frac{1}{2}\frac{d}{ds} \int_0^{z^*}  w \varepsilon_z ^2 dz  - \int_0^{z^*} ( \frac{\lambda_s}{\lambda} + \frac{\nu_s}{\nu}  + \phi +\varepsilon)   w \varepsilon_z^2 dz 
     + \int_0^{z^*} (\partial_z^{-1} \phi - \frac{\nu_s}{\nu} z + \partial_z^{-1} \varepsilon) \varepsilon_{zz}  w \varepsilon_z dz
     \\
     &- \int_0^{z^*} \phi' \varepsilon  w \varepsilon_z dz + 
     \int_0^{z^*} \partial_z^{-1} \varepsilon   w \varepsilon_z \phi'' dz \\
     = &-2\nu \Big(\int_0^{\frac{1}{\nu}} (\phi + \varepsilon)^2 (z) dz\Big) \int_0^{z^*} z\phi  w \varepsilon_z dz.
\end{split}
\end{equation}
We now compute all terms in \eqref{interior:id:expression2}.\\

\noindent \underline{Potential and transport terms}. Integrating by parts yields
\begin{equation}  \label{smooth:interior:id:expression}
    \begin{split}
        & - \int_0^{z^*} ( \frac{\lambda_s}{\lambda} + \frac{\nu_s}{\nu}  + \phi +\varepsilon)   w \varepsilon_z^2 dz +\int_0^{z^*} (\partial_z^{-1} \phi - \frac{\nu_s}{\nu} z + \partial_z^{-1} \varepsilon) \varepsilon_{zz}  w \varepsilon_z dz
        \\
        = & \Big(\int_0^{z^*} (\phi+\varepsilon)(\tilde{z})d\tilde{z}- \frac{\nu_s}{\nu}z^* \Big) \frac{1}{2}w(z^*) \varepsilon_z^2(z^*) \\
        &+\int_0^{z^*} \left(- \frac{3}{2} \phi-\frac 32 \varepsilon-\frac{\lambda_s}{\lambda}-\frac{1}{2}\frac{\nu_s}{\nu}-\frac{1}{2}\left(\pa_z^{-1}\phi -\frac{\nu_s}{\nu}z+\pa_z^{-1}\varepsilon \right)\frac{w_z}{w}\right) w\varepsilon_z^2dz 
    \end{split}
\end{equation}
For the boundary term, we know that $\|\varepsilon\|_{L^\infty} \leq C(z^*) s^{-2/3}$ from \eqref{smooth:bd:varepsilonLinfty} and thus using \eqref{smoothmodulationequations}:
\begin{equation}\label{condition-critical:s0-3}
      \int_0^{z^*} (\phi+\varepsilon)(\tilde{z})d\tilde{z}- \frac{\nu_s}{\nu}z^* \geq 1-e^{-z^*} - \|\varepsilon\|_{L^\infty} z^*+O(C(z^*)s^{-1}) \geq 1-e^{-z^*} -C(z^*)s^{-\frac 23} \geq 0
\end{equation} 
when $s_0$ is large enough. Since the weight function is $w = z^{-2}$, one has using $\phi=e^{-z}$, \eqref{smoothmodulationequations} and \eqref{smooth:bd:varepsilonLinfty}:
\begin{equation}  \label{smooth:interior:inter1}
\begin{split}
    &\left(- \frac{3}{2} \phi-\frac 32 \varepsilon-\frac{\lambda_s}{\lambda}-\frac{1}{2}\frac{\nu_s}{\nu}-\frac{1}{2}\left(\pa_z^{-1}\phi -\frac{\nu_s}{\nu}z+\pa_z^{-1}\varepsilon \right)\frac{w_z}{w}\right)
    \\
    =&\left(- \frac{3}{2} e^{-z}+O(s^{-\frac 23})+1+O(s^{-1})+O(s^{-1})-\frac{1}{2}\left(1-e^{-z} +O(s^{-1}z)+O(s^{-\frac 23}z) \right)(-\frac{2}{z})\right)
    \\
&   =1-\frac{3}{2}e^{-z}+\frac{1-e^{-z}}{z}+O(s^{-\frac 23}) \geq \frac 12+O(s^{-\frac 23})
\end{split}
\end{equation}
(where we have used the fact that $1-e^{-z} \geq ze^{-z} $. Injecting \eqref{condition-critical:s0-3} and \eqref{smooth:interior:inter1} in \eqref{smooth:interior:id:expression} shows:
\begin{equation}  \label{smooth:interior:bd:transport}
 - \int_0^{z^*} ( \frac{\lambda_s}{\lambda} + \frac{\nu_s}{\nu}  + \phi +\varepsilon)   w \varepsilon_z^2 dz +\int_0^{z^*} (\partial_z^{-1} \phi - \frac{\nu_s}{\nu} z + \partial_z^{-1} \varepsilon) \varepsilon_{zz}  w \varepsilon_z dz     \geq  (\frac 12+O(s^{-\frac 23}))\mathcal E_1(s).
\end{equation}

\noindent \underline{The nonlocal terms}. By direct computations, using $|\varepsilon (z)|\leq \mathcal E_1 \sqrt{\int_0^z w^{-1}}$, Cauchy-Schwarz, and $\phi=e^{-z}$, one gets:
\begin{equation*}
\begin{split}
    \int_0^{z^*} \Big| \phi' \varepsilon  w \varepsilon_z \Big| dz &\leq \Big(\int_0^{z^*} |\phi'|^2 w ( \int_0^{z} \frac{1}{w}(\widetilde{z}) d\widetilde{z}) dz\Big)^{\frac{1}{2}} \mathcal E_1^2 
\leq \Big(\int_0^{\infty} \frac13 ze^{-2z}dz \Big)^{\frac{1}{2}} \mathcal E_1^2 = \frac{1}{2}\sqrt{\frac1{3}} \mathcal E_1^2 ,
\end{split}
\end{equation*}
and 
\begin{equation*}
\begin{split}
    \int_0^{z^*} \Big| \partial_z^{-1} \varepsilon   w \varepsilon_z \phi'' \Big| dz &\leq \Big(\int_0^{z^*} |\phi''|^2 w(\int_0^z (\int_0^{\widetilde{z}} \frac{1}{w}(\xi)d\xi )^{\frac{1}{2}} d\widetilde{z} )^2 dz \Big)^{\frac{1}{2}} \mathcal E_1^2  \leq \Big(\int_0^{\infty} \frac4{75} z^3e^{-2z}dz \Big)^{\frac{1}{2}} \mathcal E_1^2 = \frac{1}{5\sqrt 2} \mathcal E_1^2 . 
\end{split}
\end{equation*}

\noindent \underline{The source term}. Using \eqref{smooth:bd:integral} and Cauchy-Schwarz:
$$
\left| \nu \Big(\int_0^{\frac{1}{\nu}} (\phi + \varepsilon)^2 (z) dz\Big) \int_0^{z^*} z\phi  w \varepsilon_z dz \right| \leq C s^{-1} \mathcal E_1 \sqrt{\int_0^{z^*} z^2\phi^2wdz} \leq C s^{-1}\mathcal E_1\leq  \frac{\mathcal E_1^2}{200}+ C s^{-2}.
$$

\noindent \underline{Conclusion}. Injecting \eqref{smooth:interior:bd:transport} and the three inequalities above in \eqref{smooth:interior:id:expression2} yields:
\begin{equation}
   \frac{d}{ds}\mathcal E_1^2 + \Big(1 - \sqrt{\frac13} - \frac{\sqrt{2}}5 - \frac1{200} - C({z^*})s_0^{-\frac23} \Big)\mathcal E_1^2  \leq \frac{C}{s^2}.  
\end{equation}
Choosing $s_0$ large enough so that $C({ z^*})s^{-\frac32}\leq\frac1{200}$. Then $   1 - \sqrt{\frac13} - \frac{\sqrt{2}}5 - \frac1{200} - C({z^*})s_0^{-\frac23} \geq \frac18,$ and consequently, 
$$
  \frac{d}{ds}\mathcal E_1^2 + \frac18 \mathcal E_1^2 \leq \frac{C}{s^2}.  
$$
When $s_0$ is large enough, we have
\begin{equation}
    \frac{d}{ds}(s^2 \mathcal E_1^2 e^{\frac s{16}}) = e^{\frac s{16}} s^2 \frac{d}{ds}\mathcal E_1^2 + 2s e^{\frac s{16}} \mathcal E_1^2 + \frac{s^2}{16} \mathcal{E}_1^2 e^{\frac s{16}}  \leq e^{\frac s{16}} s^2 \frac{d}{ds}\mathcal E_1^2 + e^{\frac s{16}} \frac{s^2}8 \mathcal E_1^2 \leq C e^{\frac s{16}}.
\end{equation}
Integrating both hand sides from $s_0$ to $s$, since $\mathcal E_1^2(s_0) \leq  \gamma^2 s_0^{-\frac43}$, when $s_0$ is large enough, one obtains
\begin{equation}
   \mathcal E_1^2(s) \leq \Big(s_0^{2}\mathcal E_1^2(s_0) e^{\frac {s_0}{16}} + C e^{\frac s{16}}\Big) e^{-\frac s{16}}\frac1{s^{2}} \leq 2\gamma^2 s^{-\frac43}.
\end{equation}

\end{proof}

%Theorem \ref{theorem:critical} will follow from 
The following lemma is similar to Lemma \ref{lemma:main}.

\begin{lemma}[Exterior Estimate]\label{smooth:lemma:main-critical}
There exists $\bar z^*\geq 1$, and for any $z^*\geq \bar z^*$, a $\gamma^*>0$ such that for $0<\gamma\leq \gamma^*$ the following holds true. There exists $s_0^*$ large enough such that if a solution is trapped on $[s_0,s_1]$ with $s_0\geq s_0^*$, for any time $s_0\leq s\leq s_1$ we have

\begin{eqnarray}\label{bd:int e-critical2}
\mathcal E_2^2(s)\leq \frac14 s^{-\frac43},
\end{eqnarray}

\end{lemma}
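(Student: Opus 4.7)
The plan is to mirror the proof of Lemma \ref{lemma:main}, using the maximum principle with the polynomial supersolution $f(s) := \tfrac12 s^{-2/3}$. I would first rewrite \eqref{smooth:id:equationep} as $\varepsilon_s + \mathcal L\varepsilon = F$ with transport/potential operator
\begin{equation*}
\mathcal L v := -\tfrac{\lambda_s}{\lambda}v - \tfrac{\nu_s}{\nu}zv_z - 2\phi v + (\partial_z^{-1}\phi + \partial_z^{-1}\varepsilon) v_z - \varepsilon v,
\end{equation*}
and source
\begin{equation*}
F := -\partial_z^{-1}\varepsilon\,\phi' + 2\nu\Bigl(\int_0^{1/\nu}(\phi+\varepsilon)^2\,dz\Bigr)\bigl(-1+(z+1)\phi\bigr).
\end{equation*}
Since $f_z=0$, $\phi = e^{-z}$, and $\tfrac{\lambda_s}{\lambda} = -1 + O(s^{-1})$ by \eqref{smoothmodulationequations}, using \eqref{smooth:bd:varepsilonLinfty} I obtain the identity
\begin{equation*}
(\partial_s+\mathcal L)f = \bigl[1 - 2e^{-z} + O(s^{-2/3})\bigr]\tfrac{1}{2}s^{-2/3}.
\end{equation*}
For $z \geq z^*$ with $2e^{-z^*} \leq 1/2$ and $s_0$ large enough, this yields $(\partial_s+\mathcal L)f \geq \tfrac{1}{8}s^{-2/3}$.

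Next I would bound $F$ on $[z^*, \nu^{-1}]$. Splitting $\partial_z^{-1}\varepsilon$ into its contributions on $[0,z^*]$ (controlled by \eqref{smooth:bd:int e1} and Cauchy-Schwarz as in \eqref{smooth:bd:varepsiloninterior}) and $[z^*, z]$ (controlled by \eqref{smooth:bd:etrap}) gives $|\partial_z^{-1}\varepsilon(z)| \leq C(z^*) s^{-2/3}(1+z)$, so $|\partial_z^{-1}\varepsilon\,\phi'| \leq C(z^*)(1+z)e^{-z}s^{-2/3} \leq C(z^*) e^{-z^*/2}s^{-2/3}$ on $[z^*,\nu^{-1}]$. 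The nonlocal pressure contribution is bounded using \eqref{smooth:bd:integral} and $|-1+(z+1)e^{-z}| \leq 1$ by $Cs^{-1} \leq C s_0^{-1/3} s^{-2/3}$. Together, by taking $z^*$ large and then $s_0$ large, one gets $|F| \leq \tfrac{1}{16}s^{-2/3}$.

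Now I would apply the comparison principle to $f^\pm := \pm(f - \varepsilon)$. The two estimates above yield $(\partial_s + \mathcal L)f^+ = (\partial_s+\mathcal L)f - F \geq \tfrac{1}{16}s^{-2/3} > 0$ in $(s_0,s_1)\times(z^*,\nu^{-1})$, and symmetrically $(\partial_s + \mathcal L)f^- \leq -\tfrac{1}{16}s^{-2/3} < 0$. At $z = z^*$, the interior estimate \eqref{smooth:bd:int e1} combined with \eqref{smooth:bd:varepsiloninterior} gives $|\varepsilon(s,z^*)| \leq C\gamma (z^*)^{3/2} s^{-2/3}$, strictly below $\tfrac12 s^{-2/3}$ for $\gamma^* = \gamma^*(z^*)$ small, so $f^\pm(s,z^*)$ has the correct sign. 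At $z = \nu^{-1}(s)$, the integral constraint \eqref{equation:epsilon-bc} makes this curve a characteristic of the full transport field of $\mathcal L$ as in \eqref{partical-stay}, so $f^\pm(s,\nu^{-1}(s))$ is determined by $f^\pm(s_0,\nu_0^{-1})$. Finally, the initial bound \eqref{bd:eini-beta=0} gives $|\varepsilon_0(z)| < \tfrac12 s_0^{-2/3}$ strictly on $[z^*,\nu_0^{-1}]$, so $f^\pm(s_0,\cdot)$ have the correct signs. The maximum principle then yields $|\varepsilon(s,z)| \leq \tfrac12 s^{-2/3}$ on $[s_0,s_1]\times [z^*,\nu^{-1}]$, i.e.\ \eqref{bd:int e-critical2}.

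The main obstacle, as for Lemma \ref{lemma:main}, is the order of quantifier selection: $z^* \geq \bar z^*$ must first be picked large so that $2e^{-z^*} \leq \tfrac12$ and $C(1+z^*)e^{-z^*}$ is negligible in the source bound; then $\gamma^* = \gamma^*(z^*)$ is chosen small so the boundary trace at $z = z^*$ is strictly below $\tfrac12 s^{-2/3}$; finally $s_0^*$ is taken large enough to absorb the $O(s^{-2/3})$ modulation error in the supersolution identity and the $O(s^{-1/3})$ factor from the nonlocal pressure source. In contrast with Lemma \ref{lemma:main}, no iterative bootstrap in an auxiliary constant is needed, since the target $\tfrac12 s^{-2/3}$ matches up to the strict inequality the initial smallness prescribed by \eqref{bd:eini-beta=0}.
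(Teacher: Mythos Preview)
Your proposal is correct and follows essentially the same approach as the paper: the same rewriting $\varepsilon_s+\mathcal L\varepsilon=F$, the same supersolution $f(s)=\tfrac12 s^{-2/3}$, the same source bound, the same comparison functions $f^\pm=\pm(f-\varepsilon)$, and the same boundary and initial checks leading to the maximum principle conclusion. Your constants differ harmlessly from the paper's ($\tfrac18$ versus $\tfrac14$ for the supersolution lower bound, $\tfrac1{16}$ versus $\tfrac18$ for the source), and your closing remark that no auxiliary iterative bootstrap is needed here, in contrast with Lemma \ref{lemma:main}, is exactly right; the only point you leave implicit is that the transport field is outgoing at $z=z^*$, which the paper rechecks briefly (but which already appeared as \eqref{condition-critical:s0-3} in the interior estimate).
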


\begin{proof}

The proof relies on the maximum principle. We rewrite \eqref{smooth:id:equationep} as
\begin{equation}\label{smooth:equation:epsilon-2}
    \begin{split}
        &\varepsilon_s +\mathcal L \varepsilon    = F .
    \end{split}
    \end{equation}
where the transport operator $\mathcal L$(note that it has a nonlinear part) and the source term are:
\begin{align*}
&\mathcal L v=  - \frac{\lambda_s}{\lambda}v - \frac{\nu_s}{\nu} z v_z - 2\phi v + \partial_z^{-1} \phi v_z    - \varepsilon v + \partial_z^{-1} \varepsilon v_z ,\\
&F = -\partial_z^{-1} \varepsilon \phi' +2\nu\Big(\int_0^{\frac{1}{\nu}} (\phi+\varepsilon)^2(z)dz\Big)\Big(-1+(z+1)\phi\Big).
\end{align*}

\noindent \textbf{Step 1}. \emph{A supersolution for $\pa_s+\mathcal L$ on $[z^*,\nu^{-1}]$}. We introduce
$$
f(s,z)= \frac 12 s^{-\frac 23}
$$
and claim that there exists $z^*$ large enough such that for $s_0$ large enough, for all $ s_0\leq s \leq s_1$ and $z\geq z^*$:
\be \label{smooth:exterior:id:supersolution}
(\pa_s +\mathcal L) f\geq \frac{s^{-\frac 23} }{4}.
\ee
To prove \eqref{smooth:exterior:id:supersolution}, we compute using \eqref{smoothmodulationequations} and \eqref{smooth:bd:varepsilonLinfty}:
\be
(\pa_s +\mathcal L) f = \left(-\frac{2}{3s} - \frac{\lambda_s}{\lambda} - 2e^{-z}  - \varepsilon \right)\frac{s^{-\frac 23}}{2}=  \left(O(s^{-1})+1+O(s^{-1}) +O(e^{-z^*}) +O(s^{-\frac 23}) \right)\frac{s^{-\frac 23}}{2}
\ee
which implies \eqref{smooth:exterior:id:supersolution} upon taking $z^*$ large enough and then $s_0^*$ large enough. \\

\noindent \textbf{Step 2}. \emph{Estimate for the source term}. We claim that for $z^*$ large enough and then for $\gamma$ small enough, for all $s_0\leq s \leq s_1$ and  $z\in [z^*,\nu^{-1})$:
\be \label{smooth:exterior:bd:source}
|F(s,z)|\leq \frac{s^{-\frac 23}}{8}.
\ee
We now prove this inequality. We inject the improved bootstrap bound \eqref{smooth:bd:int e1} in the computation \eqref{smooth:bd:varepsiloninterior} and get:
\be \label{smooth:exterior:bd:interiorvarepsilon}
|\varepsilon (z)|\leq C \gamma s^{-\frac23} z^{\frac 32} \quad \mbox{for }z\in [0,z^*].
\ee
Also, $|\varepsilon(z)|\leq s^{-\frac 23} $ for $z\in [z^*,\nu^{-1})$ using \eqref{smooth:bd:etrap}. Therefore, using $\phi(z)=e^{-z}$:
\begin{equation} \label{smooth:exterior:bd:inter1}
\begin{split}
    |\partial_z^{-1} \varepsilon \phi'| &\leq \|\varepsilon\|_{L^\infty} z |\phi'(z)| \leq C\Big(C(z^*) \gamma s^{-\frac 23}+s^{-\frac 23} \Big)z^*e^{-z^*} \leq  \frac{s^{-\frac 23}}{100},
\end{split}
\end{equation}
where we chose $z^*$ large enough and then $\gamma$ small enough. Next, using \eqref{smooth:bd:integral}, for all $z\geq z^*$:
\be \label{smooth:exterior:bd:inter2}
\left| 2\nu\Big(\int_0^{\frac{1}{\nu}} (\phi+\varepsilon)^2(z)dz\Big)\Big(-1+(z+1)\phi\Big) \right| \leq 8s^{-1} (1+(z+1)e^{-z})\leq \frac{C}{s}.
\ee
Combining \eqref{smooth:exterior:bd:inter1} and \eqref{smooth:exterior:bd:inter2} and taking $s_0^*$ large enough shows \eqref{smooth:exterior:bd:source}.\\

\noindent \textbf{Step 3}. \emph{End of the proof}. We introduce
\be \label{smoothexterior:def:fpm}
f^\pm= \pm \left( f-\varepsilon \right).
\ee
Then using \eqref{smooth:equation:epsilon-2}, \eqref{smooth:exterior:id:supersolution} and \eqref{smooth:exterior:bd:source} one obtains that for $s_0\leq s\leq s_1$ and $z\in [z^*,\nu^{-1}]$:
\be \label{smooth:exterior:inter1}
(\pa_s+\mathcal L)f^+= (\pa_s+\mathcal L)f+F\geq \frac{s^{-\frac 23}}{4}-\frac{s^{-\frac 23}}{8}\geq 0 \quad \mbox{and similarly} \quad (\pa_s+\mathcal L)f^-\leq 0.
\ee
Similar to \eqref{particle-moving}, thanks to \eqref{smooth:bd:boostrap improved parameters2}, one has
\begin{equation}
    -\frac{\nu_s}{\nu}z^* + \int_0^{z^*} (\phi+\varepsilon)(\tilde z) d\tilde z \geq \left(\nu - \sup\limits_{0\leq z\leq \frac1{\nu(s)}} |\varepsilon| \right)z^*\geq \left(\frac1s + O(s^{-\frac43}) -\sqrt{C^*}\tilde K e^{-\frac\delta2 s}\right)z^*\geq 0
\end{equation}
provided that $s_0$ is large enough. From this we know that the particles are always moving from region $0\leq z\leq z^*$ to $z^*\leq z \leq \frac1\nu$.
At the boundary $z=z^*$ one has using \eqref{smooth:exterior:bd:interiorvarepsilon} that:
\be \label{smooth:exterior:inter2}
f^+(s,z^*)= \frac{s^{-\frac 23}}{2}-\varepsilon (s,z^*)\geq (\frac{1}{2}-C z^{*\frac 32}\gamma s^{-\frac 23})s^{-\frac 23} \geq 0 \quad \mbox{and similarly} \quad f^-(s,z^*)\leq 0,
\ee
provided $\gamma $ is small enough depending on $z^*$. At initial time $s=s_0$, we have using \eqref{bd:eini-beta=0} that for all $z\in [z^*,\nu_0^{-1}]$:
\be \label{smooth:exterior:inter3}
f^+(s_0,z)\geq \frac{s_0^{-\frac 23}}{2}-\| \varepsilon_0\|_{L^\infty [z^*,\nu_0^{-1}]}\geq \frac{s_0^{-\frac 23}}{2}-\frac{s_0^{-\frac 23}}{4}\geq 0,  \quad \mbox{and similarly} \quad f^-(s_0,z)\leq 0.
\ee
From \eqref{partical-stay}, we know that the particle on the boundary point $z=\frac1\nu$ does not move. This together with \eqref{smooth:exterior:inter3} imply that $f^+(s,\frac1\nu) \geq 0$ and $f^-(s,\frac1\nu)\leq 0$.
Therefore, in view of \eqref{smooth:exterior:inter1}, \eqref{smooth:exterior:inter2} and \eqref{smooth:exterior:inter3} one can apply the maximum principle and obtain that $f^+(s,z)\geq 0$ and $f^-(s,z)\leq 0$ for all $s_0\leq s \leq s_1$ and $z^*\leq z \leq \nu^{-1}$. By the definition \eqref{smoothexterior:def:fpm}  of $f^\pm$ this implies the desired estimate \eqref{bd:int e-critical2} and completes the proof of the Lemma.

\end{proof}

We can now end the proof of Proposition \ref{smooth:pr:bootstrap}.

\begin{proof}[Proof of Proposition \ref{smooth:pr:bootstrap}] Proposition \ref{smooth:pr:bootstrap} is implied by Lemmas \ref{lemma:smoothmodulation}, \ref{smooth:lemma:interior} and \ref{smooth:lemma:main-critical}. The reasoning is similar, and actually simpler since fewer parameters are involved, to the proof of Proposition \ref{pr:bootstrap} which has been done for the case $\beta>0$. Thus, we omit it.

\end{proof}

We can now end the proof of Theorem \ref{theorem:critical}.

\begin{proof}[Proof of Theorem \ref{theorem:critical}] We take $\beta=0$ and $\phi(z)=\phi_{0}(z)=e^{-z}$. Let the constants $z^*,s_0^*\geq 1$ and $\gamma>0$ be given by Proposition \ref{smooth:pr:bootstrap}. Let then, where $\kappa$ is fixed shortly after:
\be \label{smooth:id:defparameters}
\lambda_0\leq \lambda_0^*/2, \quad \mbox{and} \quad \frac{2}{3\log (\lambda_0^{-1})}\leq \nu_0\leq\frac{3}{2\log (\lambda_0^{-1})}
\ee
and an initial datum $a_0$ of the form \eqref{smooth:id:initial} satisfying \eqref{smooth:id:initialcond}. We claim that for $\kappa>0$ small enough, there exist parameters $\bar \lambda_0=\lambda_0(1+O(\lambda_0\kappa))$ and $\bar \nu_0=\nu_0(1+O(\lambda_0\kappa))$, then $a_0$ is trapped in the sense of Definition \ref{def:ini crit-beta=0} with framework parameters $z^*,s_0^*,\gamma$ defined just above and decomposition parameters $\bar \lambda_0,\bar \nu_0$. The proof of this claim is so similar (and simpler since the profile is smooth) to the proof of the analogue claim in the proof of Theorem \ref{nonsmooth} for $\beta >0$, that we omit the details and refer the reader to that proof.\\

Thus, applying Proposition \ref{smooth:pr:bootstrap}, one obtains that the solution $a$ is trapped for all self-similar times $s\in [s_0,\infty)$. We invert the self-similar transformation \eqref{id:self-similarvariables} using \eqref{smooth:bd:boostrap improved parameters2} and define $T=\int_{\bar s_0}^\infty \lambda(s)ds<\infty$ so that:
$$
t(s)=\int_{\bar s_0}^s \lambda(\tilde s)d\tilde s=T-\int_{s}^\infty \tilde \lambda_\infty \tilde s e^{-\tilde s}(1+O(\tilde s^{-\frac 13}))d\tilde s=T-\tilde \lambda_\infty se^{-s}+O(s^{\frac 23}e^{-s})
$$
and hence $\tilde\lambda_\infty se^{-s}=(T-t)+O\left((T-t)|\log (T-t)|^{-1/3}\right)$. We then get using \eqref{smooth:bd:varepsilonLinfty} and \eqref{smooth:bd:boostrap improved parameters2} that $\| \varepsilon \|_{L^\infty(0,\nu^{-1})}\leq C |\log (T-t)|^{-2/3}$, $\lambda=(T-t)+O((T-t)|\log (T-t)|^{-1/3})$ and $\nu=|\log (T-t)|^{-1}+O(|\log (T-t)|^{-4/3})$. Injecting these estimates in the original variables \eqref{decomp} shows the desired estimates \eqref{smooth:1} and \eqref{smooth:2} with $\delta = \frac13$.

\end{proof}

\section*{Acknowledgments}
The work of C. Collot was funded by CY Initiative of Excellence (Grant "Investissements d'Avenir" ANR-16-IDEX-0008).  The work of S. I. was supported by NSERC grant (371637-2019). S. I. would like to thank very much both programs ``Mathematical problems in fluid dynamics" at MSRI spring 2021, and ``Hamiltonian Methods in Dispersive and Wave Evolution Equations" at ICERM fall 2021, for providing him an inspirational environment to work on this project.

\end{document}